\newcommand{\fg}{\mathfrak g}
\newcommand{\fm}{\mathfrak m}
\newcommand{\cC}{{\mathcal C}}
\newcommand{\cB}{{\mathcal B}}
\newcommand{\cL}{{\mathcal L}}
\newcommand{\cM}{{\mathcal M}}
\newcommand{\cP}{{\mathcal P}}
\newcommand{\cX}{{\mathcal X}}
\newcommand{\N}{\mathbb{N}}
\newcommand{\R}{\mathbb{R}}
\newcommand{\C}{\mathbb{C}}
\newcommand{\Z}{\mathbb{Z}}
\newcommand{\Q}{\mathbb{Q}}
\newcommand{\scA}{\mathscr{A}}
\newcommand{\scB}{\mathscr{B}}
\newcommand{\scC}{\mathscr{C}}
\newcommand{\scD}{\mathscr{D}}
\newcommand{\scF}{\mathscr{F}}
\newcommand{\scG}{\mathscr{G}}
\newcommand{\scM}{\mathscr{M}}
\newcommand{\scN}{\mathscr{N}}
\newcommand{\scO}{\mathscr{O}}
\newcommand{\scP}{\mathscr{P}}
\newcommand{\scS}{\mathscr{S}}
\newcommand{\scT}{\mathscr{T}}
\newcommand{\scR}{\mathscr{R}}
\newcommand{\sfC}{\mathsf{C}}
\newcommand{\Euc}{\mathsf{Euc}}
\newcommand{\Set}{\mathsf{Set}}
\newcommand{\Aff}{\mathsf{\cin Aff}}
\newcommand{\cring}{\cin\mathsf{Ring}}
\newcommand{\DiffSp}{\mathsf{DiffSpace}}
\newcommand{\LCRS}{\mathsf{L\cin RS}}
\newcommand{\CGA}{\mathsf{CGA}}
\newcommand{\CDGA}{\mathsf{CDGA}}
\newcommand{\Mod}{\mathsf{Mod}}
\newcommand{\Open}{\mathsf{Open}}
\newcommand{\Pre}{\mathsf{Psh}}
\newcommand{\Psh}{\mathsf{Psh}}
\newcommand{\Sh}{\mathsf{Sh}}
\newcommand{\sh}{\mathsf{sh}}
\DeclareMathAlphabet{\mathpzc}{OT1}{pzc}{m}{it}
\newcommand{\inv}{^{-1}}
\newcommand{\op}[1]{{#1}^{\mbox{\sf{\tiny{op}}}}}
\newcommand{\cin}{C^\infty}
\newcommand{\hd}{\hat{d}}
\newcommand{\bOmega}{\bm{\Upomega}}
\DeclareMathOperator{\supp}{supp}
\DeclareMathOperator{\Der}{Der}
\DeclareMathOperator{\cDer}{C^\infty Der}
\DeclareMathOperator{\Hom}{Hom}
\DeclareMathOperator{\id}{id}
\DeclareMathOperator{\Spec}{Spec}
\DeclareMathOperator{\ev}{\mathsf{ev}}
\DeclareMathOperator{\pr}{\mathsf{pr}}
\numberwithin{equation}{section}
\theoremstyle{definition}
\newtheorem{thm}{Theorem}[section]
\newtheorem{lemma}[thm]{Lemma}
\newtheorem{theorem}[thm]{Theorem}
\newtheorem{proposition}[thm]{Proposition}
\newtheorem{corollary}[thm]{Corollary}
\newtheorem*{corollary*}{Corollary}
\newtheorem*{claim*}{Claim}
\newtheorem{definition}[thm]{Definition}
\newtheorem{remark}[thm]{Remark}
\newtheorem{example}[thm]{Example}
\newtheorem{notation}[thm]{Notation}
\begin{document}
\title{Differential forms on $\cin$-ringed spaces.} 
\author{ Eugene Lerman}

\begin{abstract} We construct a complex of sheaves of differential forms on a
  local $\cin$-ringed space.  The two main classes of spaces we have
  in mind are differential spaces in the sense of Sikorski and
  $\cin$-schemes.  Just as in the case of manifolds the construction
  is functorial. Consequently forms can be integrated over simplices
  and Stokes' theorem holds.

  \end{abstract}
\maketitle
\tableofcontents

\section{Introduction}

This paper is one in a series of papers on differential geometry of
$\cin$-ringed spaces. At the moment two other papers in the series are
\cite{KL} (which constructs flows of derivations on differential
spaces) and \cite{L_cartan} (which, based on the results of this paper,
constructs  Cartan calculus on $\cin$-ringed spaces).

The goal of this paper is to construct a complex of sheaves of
differential forms on local $\cin$-ringed spaces.  The category of
local $\cin$-ringed spaces includes manifolds, manifolds with corners,
differential spaces in the sense of Sikorski \cite{Sn},
$C^\infty$-differentiable space of Navarro Gonz\'{a}lez and Sancho de
Salas \cite{NGSS}, affine $\cin$-schemes of Dubuc \cite{Dubuc} and
Kreck's stratifolds \cite{Kreck}.

The sheaves of commutative differential graded algebras (CDGAs) that
we construct agree with ordinary sheaves of de Rham differential
forms on manifolds with corners.    Our construction of differential forms is
functorial.  Consequently our differential forms  can be integrated over simplexes,
($\cin$-)singular chains and compact manifolds. As a result a version of Stokes's theorem
holds.

On the level of 1-forms our sheaves are the cotangent complexes of
$\cin$-K\"ahler differentials studies by Joyce \cite{Joy}. In the case of differential
spaces our differential forms appear to agree with the differential forms defined
by Mostow \cite{Mostow}.   In the case of stratifolds our differential
forms include Kreck's.

The construction that we carry out in the paper is fairly straightforward (the necessary background is reviewed in the next section).
 Thanks to a theorem of Dubuc and Kock, given 
a $\cin$-ring $\scC$ there exists a $\scC$-module $\Omega^1_\scC$ and a universal derivation 
$d_\scC:\scC \to \Omega^1_\scC$. This is the analogue of K\"ahler differentials in commutative algebra in the setting of $\cin$-rings.   
Similarly to what happens in commutative algebra the differential $d_\scC$ extends to a derivation $d$ of the exterior algebra $\Lambda^\bullet \Omega^1_\scC$, giving us a commutative differential graded algebra  (a CDGA) $(\Lambda^\bullet \Omega^1_\scC, \wedge, d)$.  This algebra is natural in the $\cin$-ring $\scC$.  
Consequently given a local $\cin$-ringed space $(M, \scA)$ we get a presheaf $(\Lambda^\bullet \Omega^1_\scA, \wedge, d)$ of commutative differential graded algebras, which assigns to each open set $U\subseteq M$ the CDGA $(\Lambda^\bullet \Omega^1_{\scA(U)}, \wedge, d)$.  That is,
\[
(\Lambda^\bullet \Omega^1_\scA)(U): = \Lambda^\bullet \Omega^1_{\scA(U)}
\]
for all open subsets $U\subseteq M$.  In general there is no reason for the presheaf $\Lambda^\bullet \Omega^1_\scA$ to be a sheaf.  There is one  notable exception: when $M$ is a manifold with corners and $\scA$ is the sheaf of smooth functions $\cin_M$ on $M$.  In this case $\Lambda^\bullet \Omega^1_{\cin_M}$ agrees with the sheaf $\bOmega^\bullet_{dR, M}$ of ordinary differential forms (see Lemma~\ref{lem:CDGAcorners}).  
More precisely the presheaves  of CDGAs $(\Lambda^\bullet \Omega^1_{\cin_M}, \wedge, d)$ and $(\bOmega^\bullet_{dR, M}, \wedge, d)$ agree.

For a general local $\cin$-ringed space $(M,\scA)$ we sheafify the presheaf  
$\Lambda^\bullet \Omega^1_\scA$ and obtain a sheaf $\bOmega^\bullet
_\scA$ of CDGAs over $(M,\scA)$.\footnote{We slightly  overload the symbol
  $\bOmega$:  $\bOmega^\bullet_{dR, M}$ denotes the sheaf of
  ordinary differential forms on a manifold $M$ while $\bOmega^\bullet _\scA$ denotes
  the sheaf of $\cin$-algebraic differential forms on a local
  $\cin$-ringed space $(M, \scA)$.  When $M$ is a manifold and $\scA =
  \cin_M$ then $\bOmega^\bullet _\scA = \bOmega^\bullet _{dR, M}$,
  which justifies the overload.} Differential forms on manifolds pull back.  So our ``differential forms" should also ``pull back" along the maps of local $\cin$-ringed spaces.   That is, given a map $(f,f_\#): (M, \scA)\to (N, \scB)$  of
 local $\cin$-ringed spaces, we expect to have a map of sheaves
 $\bar{f}: \bOmega^\bullet _\scB \to  f_*(\bOmega^\bullet_\scA)$ of
 CDGAs.  This is indeed the case  --- see Theorem~\ref{thm:8.13}.
 Moreover, if $M$ is a manifold with corners and structure sheaf
 $\scA$ is the sheaf $\cin_M$ of smooth functions on $M$ we get a map
$f^*: \bOmega^\bullet _\scB(N) \to \bOmega^\bullet_{dR}(M) $ of
commutative differential graded algebras.  In other words our
``abstract'' differential forms on local $\cin$-ringed spaces pull
back to familiar differential forms on manifolds with corners.  If
additinally $N$ is a manifold with corners and $\scB = \cin_N$ then
the pullback map $f^*$ is the  pullback map familiar from differential
geometry (Corollary~\ref{cor:5.11} and Remark~\ref{rmrk:5.12}).
 
 The standard geometric $k$-simplex $\Delta^k$ is a manifold with corners.  It makes sense to define a $k$-simplex of a local $\cin$-ringed space $(M, \scA)$ to be a map
 $\sigma: (\Delta^k, \cin_{\Delta^k})\to (M,\scA)$ of 
$\cin$-ringed spaces.   We define the integral $\int_\sigma \omega$ of a global section  $\omega\in \bOmega^k_\scA
(M)$ over a $k$-simplex $\sigma$ to be the integral $\sigma^*
\omega$ over  the standard simplex $\Delta^k$:
\[
  \int_{\sigma} \omega := \int_{\Delta^k} \sigma^*\omega.
\]
This definition easily extends to singular chains and then a version
of Stokes's theorem holds: for a local $\cin$-ringed space $(M,\scA)$,
a global $\cin$-algebraic de Rham form $\gamma\in
\bOmega^{k-1}_\scA(M)$ and a $k$-simplex $\sigma: (\Delta^k,
\cin_{\Delta^k})\to (M,\scA)$

\[
\int_\sigma d\gamma = \int_{\partial \sigma} \gamma.
\]
Here and elsewhere in the paper the boundary $\partial \sigma$ of a
simplex $\sigma$ is the singular chain
$\partial \sigma:= \sum (-1)^i \sigma \circ d_i$ where
$d_i:\Delta^{k-1} \to \Delta ^k$ are the standard (geometric) face
maps.

\mbox{}\\[4pt]
\noindent
{\bf Acknowledgements} I am grateful to Yael Karshon for years
of conversations throughout the pandemic.  I thank Reyer Sjamaar, Rui
Loja Fernandes, Jordan Watts, Dan Berwick-Evans and Casey Blacker for
useful discussions.

\section{Background} \label{sec:bgrd}
In this section we collect all the mathematical background we need for constructing sheaves of differential forms on $\cin$-ringed spaces.
Most, if not all, of the material is probably known to various experts.  However, I don't know of any source that has all the material in one 
place.

\begin{remark}
In the paper a manifold is always $\cin$, Hausdorff and second
countable.
\end{remark}
\subsection{$\cin$-rings} \quad
Roughly speaking, a {\sf  $\cin$-ring } is a (nonempty)
set $\scC$ together with
operations (i.e., functions) \label{page:c-ring}
\[
g_\scC:\scC^m\to \scC
\]
for all $m$ and all $g\in C^\infty (\R^m)$
such that for all
$n,m\geq 0$, %
 all $g\in C^\infty(\R^m)$ and all
$f_1, \ldots, f_m\in C^\infty (\R^n)$
\begin{equation} \label{eq:2.assoc}
({g\circ(f_1,\ldots, f_m)})_\scC (c_1,\ldots, c_n) =
g_\scC({(f_1)}_\scC(c_1,\ldots, c_n), \ldots, {(f_m)}_\scC(c_1,\ldots, c_n))
\end{equation}
for all $(c_1, \ldots, c_n) \in \scC^n$.

A formal definition of a $\cin$-ring is that of set-valued
product-preserving functor from the category $\Euc$ of Euclidean (also
known as Cartesian) spaces.  This definition requires a certain amount
of unpacking. We start by defining the category $\Euc$.

\begin{definition}[The category $\Euc$ of Euclidean spaces]
  The objects of the category $\Euc$ are the coordinate vector spaces
  $\R^n$, $n\geq 0$,  thought of as $\cin$ manifolds.  The morphisms are
  $C^\infty$ maps.
\end{definition}
We note that all objects of $\Euc$ are finite powers of one object:
$ \R^n = (\R^1)^n$ for all $n\geq 0$.

\begin{definition}\label{def:C2}
A {\sf $C^\infty$-ring} $\cC$ is a functor
\[
\cC:\Euc \to \Set
\]
from the category $\Euc$ of Euclidean spaces to the category $\Set$ of
sets that {\em preserves finite products}.

A {\sf morphism} of $\cin$-rings from a $\cin$-ring $\cC$ to a
$\cin$-ring $\cB$ is a natural transformation $\cC\Rightarrow \cB$.
\end{definition}

\begin{remark} We unpack 
Definition~\ref{def:C2}.
\begin{enumerate}
\item For any $n\geq 0$, the set $\cC(\R^n)$ is an $n$-fold product of
  the set $\scC: = \cC(\R^1)$, i.e., $\cC(\R^n) =\scC^n$.  Moreover
  the canonical projection $\pr_i: \scC\, ^n \to \scC$
  on the $i$th factor is $\cC(\R^n\xrightarrow{x_i} \R) $, where $x_i$
  is the $i$th coordinate function.
\item For any smooth function $g= (g_1,\ldots, g_m):\R^n \to \R^m$ we
  have a map of sets
\[
\cC(g): \cC(\R^n) = \scC^n \to \scC^m = \cC(\R^m)
\]
with
\[
\pr_j \circ \,\cC (g) = \cC(g_j).
\]
\item For any triple of smooth functions $\R^n\xrightarrow{g} \R^m
  \xrightarrow{f} \R$ we have three maps of sets $\cC(g)$, $\cC(f)$,
  $\cC(f\circ g) $ with
\[
\cC(f) \circ \cC(g) = \cC(f\circ g).
\]
\item $\cC(\R^0)$ is a single point set $*$.  This is because by definition
  product-preserving functors take terminal objects to terminal objects.
\end{enumerate}
It follows that $\scC = \cC(\R^1)$ is a set together with, for every
$n\geq 0$, a collection of $n$-ary operations indexed by the set of smooth
functions $\cin(\R^n)$. 
Conversely given a set $\scC$ with a family of operations indexed by 
smooth functions there is a unique product-preserving functor $\cC:\Euc\to
\Set$ with $\cC(\R^1) = \scC$.
\end{remark}

\begin{remark}

Throughout the paper we will treat $\cin$-rings as sets with
operations and morphisms of $\cin$-rings as operation-preserving maps
(rather than as functors and natural transformations).  This treatment
is standard, cf.\ \cite{MR} or \cite{Joy}.

From this point of view ($\cin$-rings are sets with operations) a {\sf
morphism} of $\cin$-rings $\varphi:\scC \to \scD$ is a map of sets
$\varphi$ which preserves all the operations: for any $n$, any $h\in
\cin(\R^n)$ and any $c_1, \ldots, c_n\in \scC$
\[
 \varphi (h_\scC(c_1,\ldots, c_n)  = h_\scD (\varphi(c_1), \ldots,
 \varphi(c_n)). 
\]
    
\end{remark}

\begin{remark} \label{rmrk:cin-R-alg}
Any $\cin$-ring has an underlying $\R$-algebra.  This can be
seen as follows.  The category $\Euc$ contains the subcategory $\Euc_{poly}$
with the same objects whose morphism are polynomial maps.  A
product-preserving functor from $\Euc_{poly}$ to $\Set$ is an
$\R$-algebra.  Clearly any product preserving functor $\cC:\Euc \to
\Set$ restricts to a product preserving functor from $\Euc_{poly}$ to $\Set$.

The structure of an $\R$-algebra on a $\cin$-ring $\scC$ can also be
seen on the level of sets and operations. 
Note that $\R^0 =\{0\}$, $\cin(\R^0) = \R$ and that for any set $\scC$
the zeroth power $\scC^0$ is a 1-point set $*$.  Consequently for
a  $\cin$-ring $\scC$, $n=0$, and $\lambda \in C^\infty (\R^0) = \R$
the corresponding 0-ary operation is a function $\lambda_\scC: *\to
\scC$, which we identify with an element of $\scC$.  By abuse of
notation we denote this element by $\lambda_\scC$.  This gives us a
map $\R\to \scC$, $\lambda \mapsto \lambda_\scC$.  One can show that
unless $\scC$ consists of one element (i.e., $\scC$ is the zero $\cin$-ring)
the map is injective.    Together with the operations $+:\scC^2\to
\scC$ and $\cdot: \scC^2 \to \scC$, which correspond to the functions
$h(x,y) = x+y$ and $g(x,y) = xy$, respectively, the 0-ary operations make any
$\cin$-ring into a unital $\R$-algebra.  We will not notationally
distinguished between a $\cin$-ring and the corresponding (underlying) $\R$-algebra.
\end{remark}

\begin{notation}
$\cin$-rings and their morphisms form a category that we denote by
$\cring$.
\end{notation}

\begin{example} \label{ex:2.1}
Let $M$ be a $\cin$-manifold and $\cin(M)$ the set of
smooth (real-valued) functions. Then $\cin(M)$, equipped with the 
operations
\[
g_{\cin(M)} (a_1,\ldots, a_m) := g\circ (a_1,\ldots, a_m),
\]
(for all $m$, all $g\in \cin(\R^m)$ and all $a_1,\ldots, a_m \in \cin(M)$) is a $\cin$ ring.
\end{example}

\begin{example}
  The real line $\R$ is a $\cin$-ring:   the $m$-ary operations are given by
\[  
  g_\R(a_1,\ldots, a_m):= g(a_1,\ldots, a_m)
\]
for all $g\in \cin(\R^m)$ and all $a_1,\ldots, a_m\in \R$.   Note that
this example  is consistent with Example~\ref{ex:2.1}: if $M$ is a
point $*$, then $\cin(M) = \R$ (since in this case the elements of $\cin(M)$ are functions $*\to
\R$) and composing a function $g\in \cin(\R^m)$ with an $m$-tuple of
constant functions is the same as evaluating $g$ on the constants.
\end{example}  

\begin{example} \label{ex:2.1+}
Let $M$ be a topological space  and $C^0(M)$ the set of
continuous real-valued  functions. Then $C^0(M)$, equipped with the 
operations
\[
g_{\cin(M)} (a_1,\ldots, a_m) := g\circ (a_1,\ldots, a_m),
\]
is also a  $\cin$ ring.
\end{example}

\begin{definition} \label{def:free}
A $\cin$-ring  $\scC$ is {\sf free} on a subset $X \subset \scC$ or is
{\sf freely generated} by a subset $X$ if for
any $\cin$-ring $\scB$ and any map of sets $f:X\to \scB$ there is a
unique map of $\cin$-rings $\tilde{f}:\scC \to \scB$ so that
$\tilde{f} |_X = f$.
\end{definition}

The following fact is well-known.  See, for example, \cite{MR}.
\begin{lemma}\label{lem:rn-is-free}
  The $\cin$-ring $\cin(\R^n)$ of smooth functions on $\R^n$ is a free
  ring on $n$ generators. The generators are the coordinate functions
  $x_1,\ldots, x_n:\R^n\to \R$.
\end{lemma}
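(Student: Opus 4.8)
The plan is to verify the universal property of Definition~\ref{def:free} directly: write down the only possible extension and then check that it is both a $\cin$-ring morphism and unique. Fix a $\cin$-ring $\scB$ together with a map of sets $f\colon \{x_1,\ldots,x_n\}\to\scB$, and set $b_i := f(x_i)\in\scB$. The candidate extension $\tilde f\colon \cin(\R^n)\to\scB$ is the substitution (``evaluation'') map
\[
  \tilde f(g) := g_\scB(b_1,\ldots,b_n)\qquad(g\in\cin(\R^n)),
\]
where $g_\scB\colon\scB^n\to\scB$ is the $n$-ary operation on $\scB$ attached to the smooth function $g$. There is no well-definedness issue, since this formula assigns a value to every $g$ outright.

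First I would confirm that $\tilde f$ restricts to $f$ on the generators. As recorded in the remark following Definition~\ref{def:C2}, the operation $(x_i)_\scB$ attached to the $i$th coordinate function is precisely the projection $\pr_i\colon\scB^n\to\scB$; hence $\tilde f(x_i) = (x_i)_\scB(b_1,\ldots,b_n) = b_i = f(x_i)$. Next I would check the morphism property: for every $m$, every $h\in\cin(\R^m)$ and all $g_1,\ldots,g_m\in\cin(\R^n)$ one needs
\[
  \tilde f\bigl(h_{\cin(\R^n)}(g_1,\ldots,g_m)\bigr)
   = h_\scB\bigl(\tilde f(g_1),\ldots,\tilde f(g_m)\bigr).
\]
Since $h_{\cin(\R^n)}(g_1,\ldots,g_m) = h\circ(g_1,\ldots,g_m)$ by Example~\ref{ex:2.1}, the left-hand side equals $\bigl(h\circ(g_1,\ldots,g_m)\bigr)_\scB(b_1,\ldots,b_n)$, and this agrees with the right-hand side by the compatibility axiom \eqref{eq:2.assoc} applied to $\scB$ along the composite $\R^n\xrightarrow{(g_1,\ldots,g_m)}\R^m\xrightarrow{h}\R$.

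For uniqueness the key observation --- and really the heart of the statement --- is that every $g\in\cin(\R^n)$ is recovered from the coordinate functions by its own operation,
\[
  g_{\cin(\R^n)}(x_1,\ldots,x_n) = g\circ(x_1,\ldots,x_n) = g\circ\id_{\R^n} = g,
\]
because the tuple $(x_1,\ldots,x_n)$ is the identity map of $\R^n$. In particular the $x_i$ generate $\cin(\R^n)$ as a $\cin$-ring. Now if $\psi\colon\cin(\R^n)\to\scB$ is any $\cin$-ring morphism with $\psi(x_i)=b_i$, then applying $\psi$ to the displayed identity and using that $\psi$ preserves the operation $g$ gives $\psi(g) = g_\scB(\psi(x_1),\ldots,\psi(x_n)) = g_\scB(b_1,\ldots,b_n) = \tilde f(g)$; hence $\psi=\tilde f$.

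I do not anticipate a genuine obstacle: the entire argument reduces to the single identity $g = g_{\cin(\R^n)}(x_1,\ldots,x_n)$ combined with axiom \eqref{eq:2.assoc}. The one point that needs care is the claim that the coordinate operation $(x_i)_\scB$ is the projection $\pr_i$; this rests on product-preservation (equivalently, functoriality of $\scB$) rather than on \eqref{eq:2.assoc} alone, so I would invoke that explicitly rather than treat it as formal.
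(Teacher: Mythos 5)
Your proposal is correct and rests on exactly the same key identity as the paper's proof, namely $g = g_{\cin(\R^n)}(x_1,\ldots,x_n)$, so that any morphism $\psi$ satisfies $\psi(g) = g_\scB(\psi(x_1),\ldots,\psi(x_n))$. The paper's proof records only this uniqueness/determination computation and leaves the existence half implicit; your write-up additionally verifies that the substitution map $\tilde f(g) = g_\scB(b_1,\ldots,b_n)$ is a $\cin$-ring morphism via \eqref{eq:2.assoc} and that $(x_i)_\scB = \pr_i$ via product preservation, which is a legitimate (and welcome) completion of the same argument rather than a different route.
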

\begin{proof}
 For any $\cin$-ring $\scC$, any map
$\varphi:\cin(\R^n)\to \scC$ of $\cin$-rings and any $f\in \cin(\R^n)$
\[
\varphi(f) = \varphi(f_{\cin(\R^n)} (x_1,\ldots, x_n) ) =
f_\scC(\varphi(x_1), \ldots, \varphi(x_n)).
  \]
\end{proof}

\begin{definition}
An {\sf ideal} in a $\cin$-ring $\scC$ is an ideal $I$ in the
$\R$-algebra underlying $\scC$ (cf.\ Remark~\ref{rmrk:cin-R-alg}).
\end{definition}  

\begin{lemma}\label{lem.1}
For any ideal $I$ in a $C^\infty$-ring $\scC$ the quotient $\R$-algebra $\scC/I$ is naturally a $C^\infty $-ring.
\end{lemma}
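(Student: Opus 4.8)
The plan is to put $\cin$-operations on $\scC/I$ using representatives, with the real work being to check they are well defined. Write $\pi:\scC\to\scC/I$ for the quotient map of $\R$-algebras and $[c]:=\pi(c)$. For each $m$ and each $g\in\cin(\R^m)$ I would set
\[
g_{\scC/I}([c_1],\ldots,[c_m]) := [\,g_\scC(c_1,\ldots,c_m)\,].
\]
Note that the $\R$-algebra operations of $\scC$ are themselves the $\cin$-operations attached to the smooth functions $(x,y)\mapsto x+y$, $(x,y)\mapsto xy$, and $\lambda$ (constants); so once this definition is shown to be legitimate, the resulting $\cin$-ring structure on $\scC/I$ automatically refines the given quotient $\R$-algebra structure, and $\pi$ is a morphism of $\cin$-rings by construction.

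The crux is well-definedness: I must show that $g_\scC$ carries congruent tuples to congruent elements, i.e. that $c_i-c_i'\in I$ for all $i$ implies $g_\scC(c_1,\ldots,c_m)-g_\scC(c_1',\ldots,c_m')\in I$. Here I invoke Hadamard's lemma: for $g\in\cin(\R^m)$ there exist $h_1,\ldots,h_m\in\cin(\R^{2m})$, explicitly $h_i(x,y)=\int_0^1 (\partial_i g)(x+t(y-x))\,dt$, with
\[
g(y)-g(x)=\sum_{i=1}^m (y_i-x_i)\, h_i(x,y)\qquad\text{for all }x,y\in\R^m.
\]
Equivalently, the smooth function $G(x,y):=g(y)-g(x)-\sum_i (y_i-x_i)h_i(x,y)$ is identically $0$ on $\R^{2m}$. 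Since $G$ is assembled from $g$, the $h_i$, and the arithmetic operations by composition, the composition axiom \eqref{eq:2.assoc}, together with the fact that subtraction and multiplication are themselves $\cin$-operations, expresses $G_\scC(c_1,\ldots,c_m,c_1',\ldots,c_m')$ as
\[
g_\scC(c_1',\ldots,c_m')-g_\scC(c_1,\ldots,c_m)-\sum_{i=1}^m (c_i'-c_i)\cdot (h_i)_\scC(c_1,\ldots,c_m,c_1',\ldots,c_m');
\]
but $G\equiv 0$ forces $G_\scC\equiv 0$. Rearranging yields the Hadamard identity inside $\scC$. Since each $c_i'-c_i\in I$ and $I$ absorbs multiplication by the elements $(h_i)_\scC(\cdots)\in\scC$, every summand lies in $I$, hence so does $g_\scC(c_1',\ldots,c_m')-g_\scC(c_1,\ldots,c_m)$. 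This is exactly what well-definedness requires, and it is the one genuinely nontrivial step.

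What remains is routine bookkeeping. The composition identity \eqref{eq:2.assoc} for the operations $g_{\scC/I}$ follows from the corresponding identity in $\scC$ by evaluating both sides on classes $[c_1],\ldots,[c_n]$, unwinding the definition, and applying $\pi$; surjectivity of $\pi$ guarantees every tuple in $\scC/I$ arises this way. Compatibility of the $0$-ary and binary operations with the quotient $\R$-algebra structure is immediate from the definition. Finally, \emph{naturally} refers to the universal property: any morphism of $\cin$-rings $\varphi:\scC\to\scB$ with $I\subseteq\ker\varphi$ factors uniquely through $\pi$. The factorization and its uniqueness already hold at the level of $\R$-algebras, and the factoring map preserves every $\cin$-operation because $\varphi$ does and $\pi$ is surjective; thus $\pi:\scC\to\scC/I$ is the cokernel of $I$ in $\cring$.
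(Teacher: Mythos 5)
Your proof is correct. The paper gives no argument of its own for this lemma---it simply cites \cite[Proposition~1.2, p.~18]{MR}---and your argument (define the operations on representatives, then prove well-definedness by transferring the Hadamard identity $g(y)-g(x)=\sum_i(y_i-x_i)h_i(x,y)$ into $\scC$ via the composition axiom \eqref{eq:2.assoc} and absorbing each term into the ideal $I$) is precisely the standard proof found in that reference, so you have in effect supplied the proof the paper delegates.
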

\begin{proof}
See \cite[Proposition~1.2, p.~18]{MR}.
\end{proof}

\begin{definition}\label{def:fg}
A $\cin$-ring $\scC$ is {\sf finitely generated} if there is a
surjective $\cin$-ring map $\varphi:\cin(\R^n) \to \scC$ for some
$n>0$.  Equivalently $\scC$ is finitely generated if $\scC$ is
isomorphic to $\cin(\R^n)/I$ for some ideal $I\subset \cin(\R^n)$.
\end{definition}
\begin{remark}
  There are  $\cin$-rings that are not finitely generated and they are
  not exotic.   For example we may take the ring $\scC$ to be the differential structure on the set 
  $\Q$ of rational induced by the inclusion $\Q\hookrightarrow \R$.
  See Definition~\ref{def:induced_diff_str} and subsequent
  discussion.  Showing that the induced differential structure on $\Q$
  is not finitely generated is not hard.  We omit the proof.
\end{remark}

We next define modules over $\cin$-rings.  Again, the definition is standard.

\begin{definition} \label{def:2.6}
A {\sf module} over a $\cin$-ring $\scC$ is a  module over
the $\R$-algebra underlying $\scC$ (cf.\ Remark~\ref{rmrk:cin-R-alg}).
\end{definition}  
Before trying to justify this definition,  we give three examples of modules.

 \begin{example}
Let $E\to M$ be a (smooth) vector bundle over a manifold $M$.  Then
the set of sections $\Gamma(E)$ of $E\to M$ is a module over the $\cin$-ring $\cin(M)$.
 \end{example}

 \begin{example} \label{ex:points-module}
Let $(M,\cin(M))$ be a manifold and $p\in M$ a point.  The
evaluation at $p$
\[
  ev_p: \cin(M) \to \R,\qquad ev_p(f) = f(p)
\]
  makes
$\R$ into a $\cin(M)$-module.  Note that the $\cin(M)$-module
structure on $\R$ depends on the choice of a point $p$.
\end{example}

\begin{example}
Any $\cin$-ring $\scC$ is a module over itself.
\end{example}

\noindent
There are several reasons why Definition~\ref{def:mod}  is 
reasonable.  We list three:\\[10pt]
\noindent {\bf (i)}\quad As we have seen above,  if $M$ is a manifold
and $E\to M$ a vector bundle over $M$ then
the vector space $\Gamma(E)$ of sections of the bundle is a module
over the $\R$-algebra $\cin(M)$.  It is reasonable to consider
it as a module over the $\cin$-ring $\cin(M)$.  In particular if $E$
is the tangent bundle $TM$ then the space of sections $\Gamma (TM)$,
which  is the space of vector fields on $M$, is a
module over $\cin(M)$.   And if $E$ is the cotangent bundle $T^*M$ then the
space of sections $\Gamma(E)$ is the space of (ordinary) 1-forms $\bOmega^1_{dR}(M)$ on
$M$, which is also  a module over $\cin(M)$.\\

\noindent {\bf (ii)}\quad \label{ii} Given a $\cin$-ring $\scC$ and an $\scC$-module $\scM$,  the product $\scC\times \scM$ has the structure of a
$\cin$-ring.  Namely, for any $n>0$, any $f\in \cin(\R^n)$ and any
$(a_1,m_1),\ldots (a_n,m_n)\in \scC\times \scM$
\[
f_{\scC\times \scM} ((a_1,m_1),\ldots, (a_n,m_n)) := (f_\scC(a_1,
\ldots, a_n), \sum_{i=1}^n (\partial_i f)_\scC(a_1,\ldots, a_n) \cdot m_i).
  \]
Recall that the multiplication in the $\R$-algebra underlying a $\cin$-ring
comes from the function $f(x,y) = xy\in
\cin(\R^2)$.  Consequently for any $(a_1, m_1), (a_2,m_2)\in \scC\times \scM$
\[
  \begin{split} 
(a_1, m_1)\cdot (a_2, m_2) &= f_{\scC\times \scM} ((a_1, m_1),
(a_2,m_2)) \\&= ( f_{\scC}(a_1,a_2), \sum_{i=1}^2 (\partial_i f)_\scC(a_1,
a_2) \cdot m_i) =
(a_1a_2, a_2m_1 + a_1m_2).
\end{split}
 \] 
 In particular
 \[
   (0, m_1) \cdot (0, m_2) = (0\cdot 0, 0m_1 +0m_2) = (0,0).
\]
Therefore, just as in the case of commutative rings and commutative
$\R$-algebras, modules over $\cin$-rings correspond to square zero
extensions.\\

\noindent {\bf (iii)}\quad \label{iii}  A module $\scM$ over a
$\cin$-ring $\scC$ is a Beck module \cite{Beck, Barr}:  just as in the
case of commutative rings given a $\cin$-ring $\scC$ and an
$\scC$-module $\scM$ the projection $p:\scC\times \scM\to \scC$ on the
first factor is an abelian group object in the slice category
$\cring/\scC$ of $\cin$-rings over $\scC$.  We omit the proof of this claim.\\

\begin{definition}[The category $\Mod$ of modules over $\cin$-rings] \label{rmrk:mod} \label{def:mod}
Modules over $\cin$-rings form a category; we denote it by $\Mod$.   The objects of
$\Mod$ are pairs $(R, M)$ where $R$ is a $\cin$-ring and $M$ is a
module over $R$.   A {\sf morphism} in $\Mod$ from $(R_1, M_1)$ to
$(R_2, M_2)$ is a pair $(f,g)$ where $f:R_1\to R_2$ is a map of
$\cin$-rings and $g:M_1\to M_2$ is a map of abelian groups so that
\[
g(r\cdot m) = f(r)\cdot g(m)
\]
for all $r\in R_1$ and $m\in M_1$.
\end{definition}

\begin{definition}  \label{def:der2} %
  A {\sf $\cin$-derivation  of a $\cin$-ring $\scC$
    with values in a $\scC$-module $\scM$ } is a map 
$X:\scC\to \scM$ so
that for any $n>0$, any $f\in \cin(\R^n)$ and any $a_1,\ldots, a_n\in
\scC$
\begin{equation}\label{eq:der}
X(f_\scC(a_1,\ldots,a_n)) = \sum_{i=1}^n (\partial_i f)_\scC(a_1,\ldots, a_n)\cdot X(a_i).
 \end{equation} 
\end{definition}

\begin{example}
Let $M$ be a smooth manifold. A $\cin$-derivation $X:\cin(M) \to
\cin(M)$ of the $\cin$-ring of smooth 
functions $\cin(M)$ with values in $\cin(M)$ is an ordinary vector
field.

The exterior derivative $d: \cin(M) \to \bOmega_{dR}^1(M)$ is a 
$\cin$-derivation of $\cin(M)$ with values in the module $\bOmega_{dR}^1(M)$ of
ordinary 1-forms.

Let $x\in M$ be a point and $\R$ a $\cin(M)$-module with the module
structure coming from the evaluation map $ev_x:\cin(M)\to \R$, cf.\
Example~\ref{ex:points-module}.  Then a derivation of $\cin(M)$ with
values in the module $\R$ is a tangent vector at the point $x\in M$.
\end{example}

\begin{notation}
The collection of all $\cin$-derivations of a $\cin$-ring $\scC$ with values
in an $\scC$-module $\scM$ is a $\scC$-module.  We denote it by
$\cDer(\scC, \scM)$.  In the case where $\scM = \scC$ we write
$\cDer(\scC)$ for $\cDer(\scC, \scC)$.
\end{notation}

\begin{remark} Recall that a $\cin$-ring $\scC$ has an underlying
  $\R$-algebra structure (Remark~\ref{rmrk:cin-R-alg}).
Therefore given a $\scC$-module $\scM$ we may also consider the space
$\Der(\scC, \scM)$ of $\R$-algebra derivations.  It is easy to see
that any $\cin$-ring derivation of $\scC$ with values in $\scM$ is an
$\R$-algebra derivation (apply a $\cin$-ring derivation to the
operation defined by $f(x,y) = xy\in \cin(\R^2)$ to get the
product rule, for example).   In general $\Der(\scC, \scM)$ is
strictly bigger than $\cin\Der(\scC, \scM)$.  For example,  Osborn shows in \cite[Corollary to Proposition 1]{Osborn} that if $\scC = \cin(\R^n)$ and 
$\scM =\Omega^1_{\cin(\R^n), alg}$ is the module of ordinary  K\"ahler differentials (in the sense of commutative algebra), then 
the universal derivation $d: \cin(\R^n) \to \Omega^1_{\cin(\R^n), alg}$ is {\em not} a $\cin$-derivation
(it is only a $\cin$-derivation on algebraic functions).   

However when the $\cin$-ring in question is a differential structure
$\scF$ on some differential space (see Definition~\ref{def:sikorski}
below) and $\scM = \scF$, then
$\cin\Der(\scF) = \Der( \scF)$.  This follows from a
theorem of Yamashita \cite{Yamashita}  because the $\cin$-ring $\scF$ is
point-determined (see Definition~\ref{def:pd} and Example~\ref{ex:ds_pd}).
\end{remark}

\begin{notation} Given two modules $\scM, \scN$ over the same $\cin$-ring
  $\scC$ we denote the collection of maps of modules from $\scM$ to
  $\scN$ by $\Hom(\scM, \scN)$.  Note that $\Hom(\scM, \scN)$ is,
  again, a module over $\scC$.
\end{notation}

\subsection{Local $\cin$-ringed spaces} \quad
Our next step is to define local $\cin$-ringed spaces and (graded) modules over
local $\cin$-ringed spaces.  We then discuss in more detail two
common examples of local $\cin$-ringed spaces: differential spaces in
the sense of Sikorski and affine $\cin$-schemes of Dubuc.

\begin{remark} \label{rmrk:values}
There are two ways to view
sheaves on a space $M$ with values in an algebraic category $\sfC$.
We can view them as functors $\op{\Open(M)}\to \sfC$, where $\Open(M)$
is the poset of open subsets of $M$ ordered by inclusion.   Or we can
view them as ``$\sfC$-objects'' internal to the category of set-valued
sheaves on $M$.  The latter usually amounts to having a collection of
maps of set-valued sheaves subject a number of commutative diagrams.
We will go back and forth between the two points of view.
\end{remark}

\begin{definition}
An  {\sf $\R$-point} of a $\cin$-ring $\scC$ is a nonzero homomorphism
$p:\scC\to \R$ of $\cin$-rings.
\end{definition}  

\begin{example}
Let $M$ be a manifold  and $\scC = \cin(M)$ the $\cin$-ring of smooth
functions on $M$.  Then for any point $x\in M$ the evaluation map at
$x$
\[
ev_x:\cin(M)\to \R,\qquad f\mapsto f(x)
\]
is an $\R$-point of the $\cin$-ring $\cin(M)$.

Conversely, given an $\R$-point $p:\cin(M) \to \R$ there is a point $x\in M$
so that $ev_x = p$.  This fact  is known as ``Milnor's exercise.''  It is a
theorem of Pursell \cite{Pursell}.

Thus for a manifold $M$ the set of
$\R$-points of $\cin(M)$  is (in bijection with) the set of ordinary
points of $M$.  
\end{example}

\begin{remark}
  The zero $\cin$-ring (which corresponds to the functor
  $\Euc\to \Set$ that sends every object to a 1-point set and whose
  underlying $\R$-algebra is 0) has no $\R$-points.
\end{remark}
\begin{remark}
  The kernel of an $\R$-point $p: \scC \to \R$ of a $\cin$-ring $\scC$
  is a maximal ideal in $\scC$.
\end{remark}
  
\begin{remark}
  There are $\cin$-rings $\scC$  containing maximal  ideals
  $\mathfrak{M}$ with the property that  the field $\scC/\mathfrak{M}$ is not
  isomorphic to $\R$.  Here is an example.  Fix $n>0$. The set of
  compactly supported functions $\cin_c (\R^n)$ is an ideal in the
  $\cin$-ring $\cin(\R^n)$.  Let $\mathfrak{M}$ be a maximal ideal
  containing $\cin_c (\R^n)$ (which exists by Zorn's lemma).  We will
  argue that the quotient $\cin(\R^n)/\mathfrak{M}$ has no $\R$-points
  hence, in particular, cannot be isomorphic to $\R$.

  Consider the sequence of projections $\cin(\R^n) \xrightarrow{\pi}
  \cin(\R^n)/\cin_c (\R^n) \xrightarrow{\pr} \cin(\R^n)/\mathfrak{M}$.
  If $\varphi: \cin(\R^n)/\mathfrak{M} \to \R$ is an $\R$ point, then
  so is the composite $\varphi \circ \pr \circ \pi: \cin(\R^n)\to
  \R$.  By Purcell's theorem mentioned above, there is a point $x\in
 \R^n$ so that $\varphi \circ \pr \circ \pi $ is the evaluation at $x$:
 \[
ev_x = \varphi \circ \pr \circ \pi .
\]    
But then every compactly supported function on $\R^n$ vanishes at $x$,
which is impossible.
\end{remark}

\begin{definition} \label{def:local_ring}
A $\cin$-ring is {\sf local} if it has exactly one $\R$-point.
\end{definition}  

\begin{remark}
Our terminology in Definition~\ref{def:local_ring} follows Dubuc
\cite{Dubuc} and Joyce \cite{Joy}.  Moerdijk and Reyes call our local
$\cin$-rings {\em pointed} local $\cin$-rings.
\end{remark}  

\begin{example} \label{ex:2.36}
Let $M$ be a manifold.  The stalk  $\cin_{M,x}$of the sheaf $\cin_M$ of smooth
functions on $M$ at a point $x$ consists of germs of functions at
$x$.   It is a $\cin$-ring (cf.\ \cite[p.\ 17]{MR}) and it is a local $\cin$-ring:
the unique $\R$-point  $p:\cin_{M,x}\to \R$ is the evaluation at the point $x$.  
See also Lemma~\ref{lem:2.58} below, which proves a more general result.
\end{example}  

%

\begin{definition} \label{def:loc-ringed_space}
A {\sf local $\cin$-ringed space}
is a pair $(M,\scA)$ where $M$ is a topological space and
$\scA$ is a sheaf of $\cin$-rings with the additional property that
the stalks of the sheaf are local $\cin$-rings.   We will refer to the
sheaf $\scA$ as the {\sf structure sheaf} of $(M,\scA)$.
\end{definition}

\begin{example}
It follows from Example~\ref{ex:2.36} that a manifold $M$ together
with the sheaf $\cin_M$ of smooth functions is a local $\cin$-ringed
space.  
\end{example}

\begin{remark} A reader unfamiliar with ringed spaces of any kind may wonder why
 anyone would single out local ringed spaces.   There is a variety of
 reasons.  One of them is that if $(M, \scA)$ is a local
 $\cin$-ringed space,  then for any open set $U\subset M$ a section
 $s\in \scA(U)$ gives rise to a real-valued function $f_s$ as follows:
 given $x\in U$, there is  a unique $\R$-point $\hat{x}: \scA_{x}\to
 \R$.  We then define
 \[
f_s(x) := \hat{x} (s_x), 
 \]  
where, as before, $s_x$ is the germ of $s$ at $x$.   For differential
spaces (which are defined below in Definition~\ref{def:sikorski}), the function $f_s$ is just $s$.   But in general the sheaf $\scA$ need
not be a sheaf of $\cin$-rings of {\em functions},  and locality gives us a
map from local sections to (local) functions.   
See the proof of Theorem~\ref{thm:8.2}  for an example of the use
of this  fact.
\end{remark}  

If manifolds were the only examples of local $\cin$-ringed spaces,
this paper would not have much of a point.   Fortunately there are two
broad classes of examples: affine $\cin$-schemes of Dubuc and
differential spaces in the sense of Sikorski.   They are discussed
below, but not immediately.   %

When discussing derivations of $\cin$-rings  we'll need the
definition of a point-determined $\cin$-ring, which we presently
recall.

\begin{definition} \label{def:pd}
A $\cin$-ring $\scC$ is {\sf point-determined} if $\R$-points separate
elements of $\scC$.  That is, if $a\in\scC$ and $a\not = 0$ then
there is an $\R$-point $p:\scC \to \R$ so that $p(a)\not = 0$.
\end{definition}

\begin{example}
Let $M$ be a topological space.  Then the $\cin$-ring $C^0(M)$ of continuous functions on $M$ is point determined: a function $f:M\to \R$ 
is nonzero if and only if $\ev_x(f) = f(x) \not = 0$ for some point $x\in M$.
\end{example}

\begin{remark} \label{rmrk:2.41-7}
In order to define maps (or, if you prefer, morphisms)  between local $\cin$-ringed
spaces we need to recall that (pre)sheaves can be pushed forward by
continuous maps.
Namely, if $\scS$ is a presheaf on a space $X$ and $f:X\to Y$ is a
continuous map,  then there is a presheaf $f_* \scS$ on $Y$ which is
defined by 
\[
  f_* \scS (U) := \scS (f\inv (U))
\]
for every open set $U\subset Y$. It is not hard to check that if
$\scS$ is a sheaf then the pushforward presheaf $f_*\scS$ is also a
sheaf.
\end{remark}
The following example is meant to motivate the definition of a map
between two $\cin$-ringed spaces.
\begin{example} Consider  a smooth map $f:M\to N$ 
between two manifolds.    It induces a map $f_\#: \cin(N) \to \cin(M)$
between $\cin$-rings; $f_\#$ is given by
\[
f_\# h := h\circ f
\]  
for all $h\in \cin(M)$; i.e., $f_\# $ is just a pullback map.    There
is also more:  for every open set $U \subset N$ we have a map of
$\cin$-rings 
\[
f_{\#, U} : \cin (U) \to \cin(f\inv (U)), \qquad f_\# (h) = h\circ f.
\]  
More precisely $\cin(U) = \cin_N(U)$ and $\cin(f\inv(U)) = \cin_M
(f\inv (U)) = (f_* \cin_M) (U)$.   Moreover, the maps
$\{f_{\#,  U} \}_{U\subset N}$ are compatible with restrictions, and therefore
  pullback by $f$ defines a map of sheaves
  \[
f_\#: \cin_N\to f_* \cin_M.
\]
\end{example}
\begin{definition}
A {\sf morphism} (or a {\sf map}) of local $\cin$-ringed spaces from $(X,\scO_X)$ to
$(Y,\scO_Y)$ is a pair $(f, f_\#)$ where $f: X\to Y$ is continuous and
$f_\#: \scO_Y\to f_* \scO_X$ is a map of sheaves of $\cin$-rings on the space $Y$.
\end{definition}
\begin{remark}
  Note that the map $(f,f_\#): (X,\scO_X) \to (Y,\scO_Y)$ of local
  $\cin$-ringed spaces induces, for every point $x\in X$,  a map of
  stalks $f_x: \scO_{Y,f(x)} \to \scO_{X, x}$
  
  In algebraic geometry one defines a morphism of locally ringed
  spaces to be a pair $(f,f_\#): (X,\scO_X) \to (Y,\scO_Y)$ so that
  the induced maps on stalks $f_x$ are all local: they are required to
  take the unique maximal ideal to the unique maximal ideal.     For
  local $\cin$-ringed spaces this is unnecessary because any map of
  local $\cin$-rings automatically preserves $\R$-points.
\end{remark}  

\begin{remark} \label{rmrk:2.46-7}
In order to define the composition of two maps of $\cin$-ringed spaces
we need to recall that pushing (pre)sheaves forward along a continuous
map $f:X\to Y$ is actually a functor from the category $\Pre_X$ of
presheaves on $X$ to the category $\Pre_Y$ of presheaves on $Y$: given
a map of presheaves $\varphi:\scO\to \scO'$ in $\Psh_X$ we get a map
of presheaves $f_*(\varphi): f_*\scO\to f_* \scO$.  Namely for an
open set $U\subset Y$ the $U$-component $\left(f_*(\varphi)\right)_U: f_*\scO(U)
\to f_*\scO' (U)$
of $f_*(\varphi)$ is, by definition, 
\[
\varphi_{f\inv (U)}: \scO(f\inv (U)) \to \scO'(f\inv (U)).
\]  
\end{remark}
\begin{definition}
Given two maps $(f, f_\#): (X, \scO_X) \to (Y, \scO_Y)$ and $(g,
g_\#): (Y, \scO_Y) \to (Z, \scO_Z)$ of local $\cin$-ringed spaces we define their composite to be
$(g\circ f, (g\circ f)_\#) :  (X, \scO_X) \to  (Z, \scO_Z)$, where
\[
(g\circ f)_\# : = (g_*(f_* \scO_x) \xleftarrow{g_* (f_\#)} g_* \scO_Y)
\circ (g_* \scO_Y \xleftarrow{g_\#} \scO_Z).
\]
Here we used  Remark~\ref{rmrk:2.46-7} and the fact that $(g\circ f)_* = g_* \circ f_*$ 
\end{definition}  

Strictly speaking the following definition is a proposition, which we won't prove.
\begin{definition}[The category $\LCRS$] \label{def:lcrs}
Local $\cin$-ringed spaces and their morphisms form a category which
we denote by $\LCRS$.
\end{definition}

\subsection{Differential spaces}
We next review the definition and some properties of a class of spaces that give rise to local $\cin$-ringed spaces.  
These are differential spaces in the sense of Sikorski \cite{Si}.  Traditionally the work on differential spaces avoids explicit mention of $\cin$-rings and of sheaves.
This has its advantages and its problems.

\begin{definition}[Differential space in the sense of Sikorski]\label{def:sikorski}
 A {\sf differential space} is a pair $(M, \scF)$ where $M$ is a
 topological space and $\scF$ is a (nonempty) set of real valued functions on $M$ so that
 \begin{enumerate}
   \item The topology on $M$ is the smallest topology making all
     functions in $\scF$ continuous; \label{def:sikorksi:it1}
 \item For any natural number $k$, for any $a_1,\ldots, a_k \in \scF$ and any
   smooth function $f\in \cin(\R^k)$ the composite $f\circ
   (a_1,\ldots, a_k)$ is in $\scF$; \label{closure}\label{def:sikorksi:it2}
\item If $g: M\to \R$ is a function with the property that  for any $x\in M$ there is an
  open neighborhood $U$ of $x$ and $f\in \scF$ ($f$ depends on $x$)
  so that $g|_U = f|_U$ then $g\in \scF$. \label{def:sikorksi:it3}
\end{enumerate}
The  whole set $\scF$ is called a  {\sf differential structure} on $M$.
\end{definition}

\begin{remark}\mbox{}
\noindent
  \begin{itemize}
 \item
One thinks of the family of functions $\scF$ on a differential
space $(M, \scF)$ as the set of all abstract smooth functions on
$M$. 
\item Condition (\ref{def:sikorksi:it2}) of
  Definition~\ref{def:sikorski} says that the smooth structure $\scF$ is a
  $\cin$-ring: for any $n>0$,
$a_1, \ldots, a_n\in \scF$ and $f\in \cin(\R^n)$ the composite
$f\circ (a_1,\ldots, a_n):M\to \R$ is again in $\scF$ by
(\ref{def:sikorksi:it2}).  Thus setting 
\[
f_{\scF} (a_1,\ldots, a_n) := f\circ (a_1,\ldots, a_n)
\]
for $f\in \cin(\R^n)$ and  $a_1, \ldots, a_n\in
\scF$ makes $\scF$ into a $\cin$-ring. \\[4pt]
(If $n=0$, then $\lambda \in \cin(\R^0) = \R$ is a constant and $\lambda_\scF:M\to \R$ is 
defined to be the constant function taking value $\lambda$ at all points of $M$.
Note that all constant functions have to be in $\scF$.  This is because  $\cin(\R^1)$ has constant functions, 
and  for any constant function $f\in \cin(\R)$ and any $a\in \scF$, the function $f_\scF(a) = f \circ a$ is constant.)

\item Condition  (\ref{def:sikorksi:it3}) 
  is a condition on the sheaf of
  functions on $M$ generated by $\scF$.  See Remark~\ref{rmrk:7.2}.
  It is often phrased as: ``if a function $g:M\to \R$ is locally
  smooth then it is smooth.''
\item Condition  (\ref{def:sikorksi:it1}) is equivalent to the existence of bump
  functions. See Appendix~\ref{app:bump}.
\end{itemize}
\end{remark}

\begin{definition}
Given a differential space $(M, \scF)$ we refer to the $\cin$-ring
$\scF$ as a {\sf differential structure} on the set $M$.

Note that by (\ref{def:sikorksi:it1}) the topology on $M$ is uniquely
determined by the differential structure $\scF$.
\end{definition}

\begin{example} \label{ex:mfld-diffspace} Let $M$ be a smooth manifold
  Then the pair $(M, \cin(M))$, where $\cin(M)$ is the set of
  $\cin$ functions, is a differential space.

  The only hard thing to check is that
  the topology on $M$ is the smallest among the topologies for which
  smooth functions are continuous.  But this follows from a theorem of
  Whitney: any closed subset of $M$ is the zero set of some smooth
  function (all our manifolds are Hausdorff and second
  countable).  See, for example, \cite[Theorem~2.29]{Lee}.  Hence if
  $U\subset M$ is open then there is a smooth function $f:M\to \R$ so
  that $U =\{x\in M \mid f(x) \not = 0\}$.
\end{example}

\begin{example}
  Let $M$ be a smooth manifold Then the pair $(M, C^0(M))$, where
  $C^0(M)$ is the set of continuous functions, is a differential
  space.
\end{example}

\begin{remark} \label{ex:ds_pd}
  If $(M, \scF)$ is a differential space then the $\cin$-ring $\scF$ is
  point-determined. This is because   a function on $M$ is non-zero only if there is a
  point at which it takes a non-zero value.
\end{remark}

Every differential space is a local $\cin$-ringed space.
Here are the details.

\begin{definition}[The structure sheaf $\scF_M$ of a differential space
  $(M, \scF)$] \label{def:str_sheaf}
Let  $(M, \scF)$ be a differential space.  Define a presheaf $\cP$
of $\cin$-rings on $M$ as follows: for an open set $U\subset M$ let 
\[
  \cP(U): = \{ f|_U \mid f\in \scF\}
\]
and let the structure maps of $\cP$  be the restrictions of functions.
Define the {\sf structure sheaf} $\scF_M$ of  the differential space $(M, \scF)$ to be the
sheafification of $\cP$.
\end{definition}
The structure sheaf $\scF_M$ has a concrete description: 

\begin{lemma} \label{lem:7.1} Let  $(M, \scF)$ be a differential
  space,  $\cP$ the presheaf on $M$ defined by 
 \[
   \cP(U): = \{ f|_U \mid f\in \scF\}
 \]
 as in Definition~\ref{def:str_sheaf}. 
  For an open subset
$U\subseteq M$ let 
\[
\begin{split}  
  \scF_M(U) :=& \{f:U\to \R\mid \textrm{ for any }x\in U \textrm{ there is
    an open neighborhood } V \\
  &\textrm{ of }x \textrm{ in }U
  \textrm{ and } g\in \cin(M) \textrm{ with } g|_V = f|_V\}.
\end{split} 
\]  
Define the restriction maps $\rho^U_V: \scF_M(U) \to \scF_M(V)$ to be the restrictions of functions: 
\[
\rho^U_V (h):= h|_V.
\]
 Then $\scF_M :=\{\scF_M(U)\}_{U\in \Open(M)}$ is a sheaf.  Moreover $\scF_M$ is a sheafification of the presheaf $\cP$. 
\end{lemma}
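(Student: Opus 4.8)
The plan is to realize both $\cP$ and the candidate $\scF_M$ as subpresheaves of the sheaf of \emph{all} real-valued functions on $M$, and then to verify the sheaf axioms and the universal property of sheafification by elementary arguments about germs of functions. Write $\cR$ for the presheaf on $M$ with $\cR(U)=\{f:U\to\R\}$ and restriction of functions as structure maps; this is a genuine sheaf, since functions are determined by their values and locally compatible functions glue uniquely. Both $\cP(U)=\{f|_U\mid f\in\scF\}$ and $\scF_M(U)$ are subsets of $\cR(U)$, and the maps $\rho^U_V$ are the restriction maps of $\cR$. In particular the presheaf axioms for $\scF_M$ are immediate, and $\scF_M$ is separated because $\cR$ is.

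First I would check that $\scF_M$ is a sheaf; only the gluing axiom needs work, separation being inherited from $\cR$. Given an open cover $U=\bigcup_i U_i$ and sections $f_i\in\scF_M(U_i)$ agreeing on overlaps, they glue in $\cR$ to a unique function $f:U\to\R$ with $f|_{U_i}=f_i$. The key point is that membership in $\scF_M$ is a \emph{local} condition: for $x\in U$ choose $i$ with $x\in U_i$; since $f_i\in\scF_M(U_i)$ there is an open $V\ni x$ with $V\subseteq U_i\subseteq U$ and $g\in\scF$ with $g|_V=f_i|_V=f|_V$. Hence $f\in\scF_M(U)$, proving gluing. (This is the sheaf-theoretic counterpart of condition~(\ref{def:sikorksi:it3}) of Definition~\ref{def:sikorski}, now applied over an arbitrary open set rather than over all of $M$.)

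Next I would produce the canonical inclusion $\iota:\cP\hookrightarrow\scF_M$: a section $f|_U\in\cP(U)$ is the restriction of $f\in\scF$, hence \emph{a fortiori} locally such a restriction, so $f|_U\in\scF_M(U)$; compatibility with restriction is clear. To identify $\scF_M$ with the sheafification $\sh(\cP)$ it then suffices to show that $\iota$ induces an isomorphism on every stalk. Surjectivity of $\iota_x:\cP_x\to(\scF_M)_x$ is precisely the defining local condition: a germ at $x$ of some $f\in\scF_M(U)$ agrees on a neighborhood $V$ of $x$ with $g|_V$ for some $g\in\scF$, so it equals the image of the germ of $g|_V\in\cP(V)$. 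Injectivity holds because elements of $\cP$ and $\scF_M$ are honest functions: two germs in $\cP_x$ with the same image in $(\scF_M)_x$ agree as functions on some neighborhood of $x$, hence already coincide in $\cP_x$.

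Finally, a morphism from a presheaf to a sheaf that is an isomorphism on all stalks exhibits the target as the sheafification. Concretely, the universal property yields a unique factorization $\bar\iota:\sh(\cP)\to\scF_M$ of $\iota$ through the unit $\cP\to\sh(\cP)$; since that unit is a stalk isomorphism and $\iota$ is too, $\bar\iota$ is a stalk isomorphism, and a stalk isomorphism of sheaves is an isomorphism. Equivalently, one may note that $\cP$ is separated (being a subpresheaf of the sheaf $\cR$), so its sheafification is computed by the single ``local sections'' step, which is exactly the construction of $\scF_M$. Either way $\scF_M\cong\sh(\cP)$. I expect the only real care to be needed in the stalk computation, specifically in establishing surjectivity, where the defining local condition of $\scF_M$ must be invoked verbatim; everything else is routine bookkeeping about functions and their restrictions.
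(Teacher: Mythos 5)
Your proposal is correct and takes essentially the same route as the paper's own (much terser) proof: the paper likewise observes that $\cP(U)\subseteq \scF_M(U)$, that the two presheaves have the same stalks, and that it therefore suffices to check that $\scF_M$ is a sheaf --- precisely the steps you carry out in detail. The one point the paper flags that you leave out is that $\scF_M$ must also be checked to be a sheaf of \emph{$\cin$-rings} (so that the identification $\scF_M\cong\sh(\cP)$ takes place in the category where $\cP$ lives); this is routine --- each $\scF_M(U)$ is closed under the operations $h_{\scF_M(U)}(f_1,\dots,f_n)=h\circ(f_1,\dots,f_n)$, since one can intersect the finitely many neighborhoods on which the $f_i$ agree with restrictions of elements of $\scF$ and invoke condition~(\ref{def:sikorksi:it2}) of Definition~\ref{def:sikorski} --- but it should be said.
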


\begin{proof}[Proof of Lemma~\ref{lem:7.1}]
It is easy to see that the restriction maps $\rho^U_V: \scF_M(U) \to \scF_M(V)$ are well-defined. Consequently $\scF_M$ is a presheaf.  Moreover  $\cP(U) \subset
\scF_M (U)$ for all opens $U\in \Open(M)$.  Furthermore the presheaves $\scF_M$ and $\cP$ have the same
stalks.  So to prove that $\scF_M$ is a sheafification of $\cP$ it is enough to
check that $\scF_M$ is a sheaf.  This is not hard.
To finish the proof one checks that $\scF_M$ is a sheaf of
$\cin$-rings.
\end{proof}

\begin{remark} \label{rmrk:7.2}
We now see what condition~(\ref{def:sikorksi:it3}) of
  Definition~\ref{def:sikorski} is saying.  It says that  the  set  of functions
  $\scF$ on a differential space $(M, \scF)$ is the $\cin$-ring of
  global sections of the sheaf $\scF_M$ induced on $M$ by $\scF$
  and that  we don't get any more global sections.
\end{remark}

\begin{remark} \label{rmrk:2.57}
Note that for an open subset $U$ of $M$ a function $f$ is in
$\scF_M(U)$ if and only if there is an open cover $
\{U_\alpha\}_{\alpha\in A} $ of $U$ and a set $\{g_\alpha\}_{\alpha
  \in A}\subset \scF$ so that $f|_{U_\alpha} = g_\alpha|_{U_\alpha}$
for all $\alpha$.
\end{remark}  
\begin{lemma} \label{lem:2.58}
Let $(M, \scF)$ be a differential space and $\scF_M$ the induced sheaf
of $\cin$-rings.  Then the stalks of $\scF_M$ are local, hence the
pair $(M,
\scF_M)$ is a local $\cin$-ringed space.
\end{lemma}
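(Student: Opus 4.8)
The plan is to prove that for every point $x\in M$ the stalk $\scF_{M,x}$ is local in the sense of Definition~\ref{def:local_ring}, i.e.\ that it has exactly one $\R$-point; the assertion that $(M,\scF_M)$ is a local $\cin$-ringed space then follows from Definition~\ref{def:loc-ringed_space}. Existence of an $\R$-point is immediate: for each open $U\ni x$ the map $\scF_M(U)\to\R$, $f\mapsto f(x)$, is a $\cin$-ring homomorphism compatible with the restriction maps, so it induces a homomorphism $\ev_x:\scF_{M,x}\to\R$ on the stalk $\scF_{M,x}=\colim_{U\ni x}\scF_M(U)$, and $\ev_x$ is nonzero since it sends the germ of the constant function $1$ to $1$. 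The entire content of the lemma is therefore the \emph{uniqueness} of this $\R$-point.

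First I would record that every germ at $x$ is represented by an honest element of $\scF$: by Remark~\ref{rmrk:2.57} any $f\in\scF_M(U)$ agrees, on a neighborhood of $x$, with some $g\in\scF$, so each $s\in\scF_{M,x}$ equals $[g]_x$ for some $g\in\scF$ and $\ev_x(s)=g(x)$. The key step — and the one place where the $\cin$-ring structure, as opposed to the mere underlying $\R$-algebra structure, is essential — is the following characterization of units: a germ $s=[g]_x$ is invertible in $\scF_{M,x}$ if and only if $g(x)\neq 0$. If $g(x)=0$ this is clear, since an inverse would satisfy $gh=1$ near $x$, whence $0=g(x)h(x)=1$. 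For the converse suppose $g(x)\neq 0$. Since functions in $\scF$ are continuous, $g$ is nonzero on an open $V\ni x$; choose $\psi\in\cin(\R)$ with $\psi(t)=1/t$ for all $t$ in a neighborhood of $g(x)$. Then $\psi_\scF(g)=\psi\circ g\in\scF$, and on the open set $\{y: g(y)\text{ lies in that neighborhood}\}$, which contains $x$, we have $g\cdot(\psi\circ g)=1$; hence $[g]_x[\psi\circ g]_x=1$ in the stalk. Consequently the set of non-units of $\scF_{M,x}$ is exactly $\fm_x:=\ker\ev_x=\{[g]_x: g(x)=0\}$, a proper ideal.

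Finally I would deduce uniqueness. Any $\R$-point $p:\scF_{M,x}\to\R$ is a nonzero $\R$-algebra homomorphism, so $p(1)=1$, its kernel is a maximal ideal, and $p(u)\neq 0$ for every unit $u$ (because $p(u)p(u\inv)=1$). Thus $\ker p$ contains no units, i.e.\ $\ker p\subseteq\fm_x$; since $\ker p$ is maximal and $\fm_x$ is proper, $\ker p=\fm_x=\ker\ev_x$. For any germ $s$ the element $s-\ev_x(s)\cdot 1$ lies in $\fm_x=\ker p$, so $p(s)=\ev_x(s)\,p(1)=\ev_x(s)$; hence $p=\ev_x$, and $\ev_x$ is the unique $\R$-point. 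I expect the construction of the inverse $\psi\circ g$ — the only genuinely $\cin$-algebraic ingredient — to be the crux, everything else being formal once the units are identified with the germs nonvanishing at $x$.
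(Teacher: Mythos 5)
Your proposal is correct and follows essentially the same route as the paper: both identify the evaluation map $\ev_x$ as the candidate $\R$-point and then show every germ outside $\ker \ev_x$ is a unit by composing with a globally smooth function that agrees with $t\mapsto 1/t$ near the nonzero value $g(x)$ (the paper's $\zeta = 1/\eta$ is your $\psi$). The only difference is that you spell out the formal deduction that ``non-units form an ideal $\Rightarrow$ unique $\R$-point,'' a reduction the paper asserts without proof; this is a welcome bit of added completeness, not a different method.
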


\begin{proof}
Let $x\in M$ be a point and $\scF_{M,x}$ the stalk of $\scF_M$ at $x$.
Recall that $\scF_{M,x}$ is the set of equivalence classes of pairs $(U,
f)$ where $U$ is an open neighborhood of $x$ and $f\in \scF_M(U)$ a
smooth function.  In other words elements of $\scF_{M,x}$  are germs of functions.
We denote an element of $\scF_{M,x}$ either by
$[(U,f)]$ or by $f_x$, depending on whether we want to emphasize that
that the function representing the germ is defined on a neighborhood
$U$ or if we want to emphasize  that germ is at the point $x$ (and the
neighborhood $U$, on which $f$ is defined, is left unmentioned).

We have a well-defined evaluation map
\[
\ev_x: \scF_{M,x}\to \R, \qquad \ev_x (f_x) = f(x).
\]
Since the evaluation map is onto, its kernel $\fm$ is a maximal ideal
in $\scF_{M,x}$.    To prove that $\scF_{M,x}$ is local, it is enough to
show that any element of the complement of $\fm$ in $\scF_{M,x}$ is a
unit.

Suppose $f_x \in \left(\scF_{M,x}\right) \smallsetminus \fm$.  It is no loss of
generality to assume that $a:= \ev_x(f_x) = f(x) >0$.   Choose a smooth function
$\eta $ on $\R$ so that $\eta >0$ and $\eta(t) = t$ for all $t\in
(a/2, 3a/2)$.    Since $\eta > 0$, $\zeta (t) = 1/\eta(t)$ is
well-defined and smooth.  Consider $V= f\inv ((a/2, 3a/2))$.   For any
point $y\in V$,   $\eta (f (y)) = f(y)$.  On the other hand,
\[
1 =(\eta\cdot  \zeta) (f(y)) = \eta (f(y))\cdot  \zeta (f(y))
\]
for all $y\in V$.
It follows that $[(V, \zeta \circ f)]$ is the inverse of $f_x$ in the
stalk $\scF_{M, x}$.  Hence the germ $f_x$ is a unit, as we wanted to show.
\end{proof}

Differential spaces form a category that we will denote by $\DiffSp$.
The morphisms of this category are smooth maps that are defined as
follows.

\begin{definition}  A  {\sf smooth map} from a
  differential space  $(M,\scF)$  to a differential space $(N,
  \scG)$ is a function 
  $\varphi:M\to N$ so that for
  any $f\in \scG$ the composite $f\circ \varphi$ is in $\scF$.
\end{definition}

\begin{remark}
We have observed in Lemma~\ref{lem:2.58} above that to every differential space $(M,\scF)$ one
can associate a local $\cin$-ringed space $(M, \scF_M)$.  The map sending a differential space to the
corresponding local $\cin$-ringed space  can be upgraded to a fully faithful
functor $I:\DiffSp \to \LCRS$.  That is,  the category of differential
spaces is (isomorphic to) a full subcategory of the category $\LCRS$
of local $\cin$-ringed spaces.  See Theorem~\ref{thm:8.2} in Appendix~\ref{app:embed}.
\end{remark}

The rest of this subsection will be used for computing examples in the
next section.  It will be convenient at this point to recall the
notion of the initial topology.
\begin{definition}
Let $X$ be a set and $\scF$ a set of maps from $X$ to various
topological spaces.  The smallest topology on $X$ making all functions
in $\scF$ continuous is called {\sf initial}.

In particular a collection of real-valued functions $\scF$ on a set
$X$ uniquely defines an initial topology on $X$ (we give the real line
$\R$ the standard topology, of course).
\end{definition}  

\begin{definition}
A differential structure $\scF$ on a set $M$ is {\sf generated} by a subset
$A\subseteq \scF$ if $\scF$ is the smallest differential structure
containing the set $A$.  That is, if $\scG$ is a differential
structure on $M$ containing $A$, then $\scF\subseteq \scG$.
\end{definition}  

\begin{lemma}\label{rmrk:2.22}
Given a collection $A$ of
real-valued functions on a set $M$ there is a differential structure $\scF$
on $M$ generated by $A$.   %
The initial topology for $\scF$ 
is the initial topology for the set  $A$. %
\end{lemma}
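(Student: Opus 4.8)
The plan is to build $\scF$ explicitly in two stages and then check minimality and the topology statement together. First I would close $A$ under the $\cin$-ring operations by setting
\[
A' := \{ f\circ(a_1,\ldots,a_k) : k\geq 0,\ f\in\cin(\R^k),\ a_1,\ldots,a_k\in A\},
\]
regarded as a subset of the $\cin$-ring of all real-valued functions on $M$ (with operations given by post-composition, as in Example~\ref{ex:2.1+}). Using the associativity identity \eqref{eq:2.assoc}, any composite of elements of $A'$ with a smooth function can be rewritten as a single smooth function applied to a tuple drawn from $A$, so $A'$ satisfies condition~(\ref{def:sikorksi:it2}) and is a $\cin$-ring; the case $k=0$ also puts all constants into $A'$, so $A'$ is nonempty. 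Since each generator in $A$ is continuous for the initial topology $\tau_A$ of $A$, and smooth functions are continuous, every element of $A'$ is $\tau_A$-continuous; as $A\subseteq A'$ this gives the equality of initial topologies $\tau_{A'}=\tau_A$.

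Second, I would form the local closure of $A'$ with respect to $\tau_A$, in direct analogy with the sheafification described in Lemma~\ref{lem:7.1}:
\[
\scF := \{ g:M\to\R : \forall x\in M\ \exists\, U\ni x\ \text{open in}\ \tau_A\ \text{with}\ g|_U=f|_U\ \text{for some}\ f\in A'\}.
\]
Each $f\in A'$ lies in $\scF$, and every $g\in\scF$ is $\tau_A$-continuous because continuity is local and $g$ agrees locally with $\tau_A$-continuous functions; together with $A\subseteq\scF$ this yields $\tau_\scF=\tau_A$, which is the second assertion of the lemma. With the topology pinned down, conditions~(\ref{def:sikorksi:it2}) and~(\ref{def:sikorksi:it3}) for $\scF$ are routine: for composition, if $g_1,\ldots,g_m\in\scF$ agree near $x$ with $f_1,\ldots,f_m\in A'$ on a common neighborhood, then $h\circ(g_1,\ldots,g_m)$ agrees there with $h\circ(f_1,\ldots,f_m)\in A'$; for the sheaf condition, being ``locally in $A'$'' is transitive, so a function locally in $\scF$ is already in $\scF$. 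Thus $\scF$ is a differential structure containing $A$.

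For minimality, let $\scG$ be any differential structure with $A\subseteq\scG$. Being a $\cin$-ring, $\scG$ contains $A'$; and $A\subseteq\scG$ forces $\tau_A\subseteq\tau_\scG$, so every $\tau_A$-open set is $\tau_\scG$-open. Hence any $g\in\scF$ agrees, on a $\tau_\scG$-open neighborhood of each point, with an element of $A'\subseteq\scG$, and condition~(\ref{def:sikorksi:it3}) for $\scG$ gives $g\in\scG$. Therefore $\scF\subseteq\scG$, so $\scF$ is the smallest differential structure containing $A$, i.e.\ the one generated by $A$.

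The main obstacle is not any single computation but the apparent circularity in the notion of a differential structure: condition~(\ref{def:sikorksi:it3}) refers to a topology which for a differential structure is defined to be its own initial topology. The observation that dissolves this is exactly the second claim of the lemma — that the initial topology is already determined by the generators, so it may be fixed as $\tau_A$ \emph{before} $\scF$ is fully constructed. This decouples the topology from the set of functions and lets all the verifications be carried out in one fixed ambient topology.
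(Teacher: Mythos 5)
Your proof is correct. Note that the paper itself does not prove this lemma at all — its ``proof'' is a citation to \cite[Theorem~2.1.7]{Sn} — so what you have written is a self-contained replacement for that reference, and it follows the standard two-stage construction one finds there: first close $A$ under composition with smooth functions to get a $\cin$-subring $A'$ of the ring of all real-valued functions on $M$, then take the local closure of $A'$ with respect to the initial topology $\tau_A$ of the generators, in the same spirit as the sheaf-theoretic closure in Lemma~\ref{lem:7.1}. The two points on which such an argument could founder are both handled properly: (i) you fix the topology as $\tau_A$ \emph{before} forming the local closure, and then verify a posteriori that $\tau_{\scF}=\tau_A$, which resolves the circularity in condition~(\ref{def:sikorksi:it1}) (the topology a differential structure must carry is its own initial topology); and (ii) in the minimality step you use that $A\subseteq\scG$ forces $\tau_A\subseteq\tau_{\scG}$, so that agreement on $\tau_A$-open neighborhoods is in particular agreement on $\tau_{\scG}$-open neighborhoods, which is exactly what is needed to invoke condition~(\ref{def:sikorksi:it3}) for $\scG$ and conclude $\scF\subseteq\scG$. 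The finite-intersection argument for closure of $\scF$ under the $\cin$-operations and the transitivity of ``locally in $A'$'' for the sheaf condition are both sound, so the proof stands as a complete substitute for the citation.
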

\begin{proof}
See \cite[Theorem~2.1.7]{Sn}.
\end{proof}

\begin{notation}
We write $\scF =\langle A \rangle$ if the differential structure  $\scF$ is generated by the set $A$.
\end{notation}   
\begin{definition} \label{def:induced_diff_str}
Let $(M,\scF)$ be a differential space and $N\subseteq M$ a subset.
The {\sf  subspace differential structure } $\scF^N$ on $N$, also
known as the {\sf induced differential structure}, is the differential
structure on $N$ generated by the restrictions to $N$ of the
functions in $\scF$:
\[
\scF^N := \langle \scF|_N\rangle.
\]  
\end{definition}

\begin{lemma} \label{lem:2.24}
Let $(M,\scF)$ be a differential space and $(N, \scF^N)$ a subset of
$M$ with the induced subspace differential structure.     Then the
smallest 
topology on $N$  making all the functions of $\scF^N$ continuous
agrees with the subspace topology on $N$ coming from the inclusion
$i:N\hookrightarrow M$.
\end{lemma}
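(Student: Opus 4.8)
The plan is to split the statement into two independent pieces: first, to reduce the claim about the induced differential structure $\scF^N = \langle \scF|_N\rangle$ to a claim about the mere generating set $\scF|_N$; and second, to prove the purely topological fact that the initial topology of a family of restricted functions coincides with the subspace topology.

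For the first reduction I would invoke Lemma~\ref{rmrk:2.22} with the collection $A := \scF|_N = \{f|_N \mid f\in \scF\}$. That lemma produces precisely the generated differential structure $\langle \scF|_N\rangle = \scF^N$ and, crucially, asserts that its initial topology agrees with the initial topology for the generating set $A = \scF|_N$. This is the key point: a priori $\scF^N$ contains far more functions than the restrictions themselves (all composites $g\circ(a_1,\dots,a_k)$ with $g\in\cin(\R^k)$ and $a_i\in\scF|_N$, and all locally-smooth functions coming from condition~(\ref{def:sikorksi:it3})), and Lemma~\ref{rmrk:2.22} guarantees that none of these extra functions refine the topology. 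It therefore remains only to show that the initial topology on $N$ for $\scF|_N$ equals the subspace topology coming from $i:N\hookrightarrow M$.

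For the second piece I would argue directly with subbases. By condition~(\ref{def:sikorksi:it1}) of Definition~\ref{def:sikorski}, the topology on $M$ is the initial topology for $\scF$, so $\{f\inv(V)\mid f\in\scF,\ V\subseteq\R \text{ open}\}$ is a subbasis for $M$. The subspace topology on $N$ is generated by the sets $U\cap N$ with $U$ open in $M$; distributing intersection with $N$ over unions and finite intersections shows that $\{f\inv(V)\cap N\}$ is a subbasis for the subspace topology. Since $f\inv(V)\cap N = (f|_N)\inv(V)$, this family is exactly the defining subbasis for the initial topology on $N$ for $\scF|_N$. Having a common subbasis, the two topologies coincide.

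I expect the only delicate point to be the first reduction, namely correctly extracting from Lemma~\ref{rmrk:2.22} that enlarging $\scF|_N$ to the full differential structure $\scF^N$ does not change the initial topology; once this is secured, the remainder is the standard and essentially formal identification of the initial topology with the subspace topology, needing only the observation that a subbasis for $M$ restricts to a subbasis for $N$ in both constructions.
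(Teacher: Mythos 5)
Your proposal is correct and follows essentially the same route as the paper: both reduce the statement via Lemma~\ref{rmrk:2.22} to the claim that the initial topology for the generating set $\scF|_N$ agrees with the subspace topology. The only difference is that you spell out the subbasis argument establishing that claim (restricting the subbasis $\{f\inv(V)\}_{f\in\scF}$ of $M$ to $N$), whereas the paper simply asserts it.
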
  

\begin{proof}  The initial topology for the set $\scF|_N$ of
  generators of $\scF^N$ is the subspace topology.  Consequently the
  initial topology for $\scF^N = \langle \scF|_N\rangle$ is also the
  subspace topology (cf.\ Lemma~\ref{rmrk:2.22}).
\end{proof}

\begin{remark} \label{rmrk:2.66aug}
  The subspace differential structure $\scF^N$ can be
  given a fairly explicit description:
\begin{align*}
\scF^N= \{ f:N\to \R\mid & \textrm{ there is a collection of sets }
\{U_i\}_{i\in I}, \textrm{ open in M, with } \bigcup _i
U_i \supset N \\  & \textrm{ and a collection } \{g_i\}_{i\in I}
\subseteq \scF \textrm{ such that } f|_{N\cap U_i}  =
g_i|_{N\cap U_i } \textrm { for all indices } i\}.
\end{align*}
\end{remark}  

\begin{remark}\label{rmrk:cin U}
Let $(M,\scF)$ be a differential space and $(N, \scF^N)$ a subset of
$M$ with the induced subspace differential structure.   Then the
inclusion map $i:N\hookrightarrow M$ is smooth since for any $f\in
\scF$, $f\circ i = f|_N \in \scF^N$ by definition of $\scF^N$.

The subspace differential structure $\scF^N$ is the {\em smallest}
differential structure on $N$ making the inclusion $i:N\to M$ smooth.
This is because any differential structure $\scG$ on $N$ making the inclusion
$i:(N,\scG) \hookrightarrow (M, \scF)$ smooth must contain the set $\scF|_N$,
\end{remark}

\begin{lemma}  \label{lem:diff_subspaces_embed}
Let $(M,\scF)$ be a differential space and $(N, \scF^N)$ a subset of
$M$ with the induced subspace differential structure.  For any
differential space $(Y,\scG)$ and for any smooth map $\varphi:Y\to M$
that factors through the inclusion $i:N\to M$ (i.e., $\varphi(Y)
\subset N$), the map $\varphi: (Y,\scG)\to (N, \scF^N)$ is smooth.
\end{lemma}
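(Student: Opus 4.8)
The plan is to reduce the statement to the explicit local description of the subspace differential structure in Remark~\ref{rmrk:2.66aug}, together with the locality axiom (condition~(\ref{def:sikorksi:it3}) of Definition~\ref{def:sikorski}) for $\scG$. Writing $\bar\varphi : Y \to N$ for the corestriction of $\varphi$ (so $\varphi = i\circ\bar\varphi$), smoothness of $\varphi:(Y,\scG)\to(N,\scF^N)$ means exactly that $f\circ\bar\varphi\in\scG$ for every $f\in\scF^N$, and this is what I would verify.

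First I would record that $\varphi:Y\to M$ is continuous. This is the standard fact that a smooth map of differential spaces is continuous: the topology on $M$ is the initial topology for $\scF$ (condition~(\ref{def:sikorksi:it1}) of Definition~\ref{def:sikorski}), and for each $g\in\scF$ the composite $g\circ\varphi$ lies in $\scG$ and is therefore continuous for the initial topology on $Y$; hence $\varphi^{-1}$ of any subbasic open of $M$ is open, and in particular $\varphi^{-1}(U)$ is open in $Y$ for every $U$ open in $M$.

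Next I would fix $f\in\scF^N$ and apply Remark~\ref{rmrk:2.66aug}: there is a family $\{U_i\}_{i\in I}$ of open subsets of $M$ with $\bigcup_i U_i\supseteq N$ and functions $g_i\in\scF$ such that $f|_{N\cap U_i}=g_i|_{N\cap U_i}$ for all $i$. Since $\varphi(Y)\subseteq N$, the sets $\varphi^{-1}(U_i)=\bar\varphi^{-1}(N\cap U_i)$ are open in $Y$ and cover $Y$. For $y\in\varphi^{-1}(U_i)$ one has $\bar\varphi(y)\in N\cap U_i$, so $f(\bar\varphi(y))=g_i(\bar\varphi(y))=g_i(\varphi(y))$; thus $(f\circ\bar\varphi)|_{\varphi^{-1}(U_i)}=(g_i\circ\varphi)|_{\varphi^{-1}(U_i)}$. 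As $\varphi:(Y,\scG)\to(M,\scF)$ is smooth and $g_i\in\scF$, each $g_i\circ\varphi$ belongs to $\scG$. Therefore $f\circ\bar\varphi$ agrees locally, on the open cover $\{\varphi^{-1}(U_i)\}$ of $Y$, with elements of $\scG$, and the locality axiom (condition~(\ref{def:sikorksi:it3})) for $\scG$ forces $f\circ\bar\varphi\in\scG$. Since $f$ was arbitrary, this proves smoothness.

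The step most prone to error is the bookkeeping around the corestriction: one must keep straight that $f$ lives on $N$ while $g_i$ lives on $M$, that they agree exactly on $N\cap U_i$, and that $N\cap U_i$ is precisely the $\bar\varphi$-image of the open set $\varphi^{-1}(U_i)$ used in the locality argument. Everything past that is a direct invocation of locality for $\scG$ and of the smoothness of $\varphi$ into $M$. A slightly more abstract alternative, which I would keep in reserve, is to show that $\scH:=\{f\in\scF^N \mid f\circ\bar\varphi\in\scG\}$ is a differential structure on $N$ (for the subspace topology) containing $\scF|_N$, so that $\scF^N\subseteq\scH$ by minimality of the generated structure; verifying the locality axiom for $\scH$ again amounts to pulling an open cover back through the continuous map $\varphi$, so the two routes carry the same content.
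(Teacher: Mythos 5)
Your proof is correct, but it takes a genuinely different route from the paper's. The paper's proof is precisely the abstract alternative you keep in reserve: it observes that for every $f\in\scF$ one has $\varphi^*(f|_N) = f\circ i\circ\varphi = f\circ\varphi \in \scG$, hence $\varphi^*\left(\scF|_N\right)\subseteq\scG$, and then concludes that $\varphi^*\left(\scF^N\right)\subseteq\scG$ because $\scF|_N$ generates $\scF^N$ and $\scG$ is a differential structure --- i.e.\ it invokes minimality of the generated structure, which implicitly amounts to checking that the set of functions on $N$ whose pullback lies in $\scG$ is a differential structure containing $\scF|_N$. Your primary argument instead unwinds the explicit local description of $\scF^N$ from Remark~\ref{rmrk:2.66aug} and applies the locality axiom (condition~(\ref{def:sikorksi:it3}) of Definition~\ref{def:sikorski}) for $\scG$ directly, after first establishing continuity of $\varphi$ (which you do correctly, and which the paper's argument never needs, since that work is absorbed into the notion of generated structure). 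What each approach buys: the paper's version is three lines and avoids all point-set bookkeeping, but its final step is terse and its full justification is exactly your reserve argument; your version is self-contained at the level of points and open covers, at the cost of leaning on Remark~\ref{rmrk:2.66aug}, which the paper states without proof and which carries essentially the same content as the generation argument. The two routes are thus two packagings of the same locality phenomenon, and both are sound.
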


\begin{proof}
We need to show that $\varphi^* h \equiv h\circ \varphi \in \scG$ for
any $h\in \scF^N$.     For any $f\in \scF$,
\[
\scG \ni f\circ \varphi = f\circ i \circ \varphi = \varphi^*(f|_N).
\]
Consequently $\varphi^* \left(\scF|_N\right) \subseteq \scG$.  Since $\scF|_N$
generates $\scF^N$ and since $\scG$ is a differential structure, we
must have $\varphi^* \left( \scF^N \right) \subseteq \scG$ as well.
\end{proof}
\begin{remark} \label{rmrk:cinU}
Let $(M, \scF)$ be a differential space, $\scF_M$ the corresponding
sheaf of $\cin$-rings and $U\subset M$ an open subset.  Then the
induced subspace  differential structure $\scF^U$ on $U$ is the value
of the sheaf $\scF_M$ on $U$:
\[
\scF^U =\scF_M(U).
\]  
This follows from Remark~\ref{rmrk:2.66aug} and from the definition of the
sheaf $\scF_M$.
\end{remark}

In the case where the differential space $(M, \scF)$ is a manifold and
$N$ is a subset of $M$, the subspace differential structure  $\scF^N$
has a simple description: 

\begin{lemma}\label{rmrk:2.13} \label{lem:2.27}
Let $M$ be a manifold and $N$ a subset of $M$, 
Then $f:N\to \R$ is in $\cin(M)^N:= \langle \cin(M)|_N\rangle$ (the subspace differential
structure on $N$) if and only if there is 
an open neighbourhood $U$ of $N$ in $M$ and a smooth function $g:U\to \R$ 
such that $f = g|_N$.

Moreover, if $N$ is closed in $M$, we may take $U= M$.  And then
$\cin(M)^N:=  \cin(M)|_N$. 
\end{lemma}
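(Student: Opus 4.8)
The plan is to reduce everything to the explicit description of the subspace differential structure recorded in Remark~\ref{rmrk:2.66aug}, together with the existence of bump functions and partitions of unity on the manifold $M$ (which is Hausdorff and second countable, hence paracompact, as fixed at the start of Section~\ref{sec:bgrd}). Specializing that description to $\scF = \cin(M)$, a function $f:N\to\R$ lies in $\cin(M)^N$ if and only if there is a family $\{U_i\}_{i\in I}$ of open subsets of $M$ covering $N$ and a family $\{g_i\}_{i\in I}\subset \cin(M)$ of \emph{globally} smooth functions with $f|_{N\cap U_i} = g_i|_{N\cap U_i}$ for every $i$. Both directions of the equivalence then become gluing arguments.

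For the implication where $f = g|_N$ for some open $U\supseteq N$ and $g\in\cin(U)$, the point is that $g$ need not extend to all of $M$, so I would localize. For each $x\in N\subseteq U$ choose an open $V_x\ni x$ with $\overline{V_x}\subset U$ (possible since $M$ is locally compact Hausdorff) and a bump function $\phi_x\in\cin(M)$ equal to $1$ on $V_x$ with $\supp\phi_x\subset U$. Then $g_x := \phi_x g$ extends by zero to an element of $\cin(M)$ and agrees with $g$ on $V_x$, so $g_x|_{N\cap V_x} = f|_{N\cap V_x}$. Since the sets $\{V_x\}_{x\in N}$ cover $N$, Remark~\ref{rmrk:2.66aug} yields $f\in\cin(M)^N$.

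For the converse, let $f\in\cin(M)^N$ and take $\{U_i\}$, $\{g_i\}$ as above; set $U:=\bigcup_i U_i$, an open neighbourhood of $N$. As $U$ is itself a second-countable Hausdorff manifold, it carries a partition of unity $\{\rho_i\}$ subordinate to $\{U_i\}$, so that $\supp\rho_i\subseteq U_i$ and the sum is locally finite. Define $g := \sum_i \rho_i g_i \in\cin(U)$. For $x\in N$, every index $i$ contributing to the sum satisfies $x\in\supp\rho_i\subseteq U_i$, hence $x\in N\cap U_i$ and $g_i(x)=f(x)$; therefore $g(x) = \sum_i \rho_i(x) f(x) = f(x)$ because the $\rho_i$ sum to $1$. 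Thus $g|_N = f$ with $g$ smooth on the open neighbourhood $U$, as required.

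Finally, for the closed case, suppose $N$ is closed and let $U\supseteq N$, $g\in\cin(U)$ be as just produced. The cover $\{U,\, M\setminus N\}$ of $M$ admits a subordinate partition of unity $\{\rho,\psi\}$ with $\supp\rho\subset U$ and $\supp\psi\subset M\setminus N$. Then $\tilde g := \rho g$ extends by zero to $\cin(M)$, and for $x\in N$ we have $\psi(x)=0$, hence $\rho(x)=1$ and $\tilde g(x)=g(x)=f(x)$. So $f = \tilde g|_N$ with $\tilde g\in\cin(M)$, giving $U=M$ and, combined with the trivial inclusion $\cin(M)|_N\subseteq\cin(M)^N$, the equality $\cin(M)^N = \cin(M)|_N$. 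The only genuinely delicate step is the gluing in the converse direction: the local data $g_i$ agree with $f$ merely on $N\cap U_i$ and generally disagree with one another away from $N$, so one must verify that the partition-of-unity average collapses to $f$ exactly on $N$ — which it does precisely because at a point of $N$ only indices $i$ with $x\in U_i$ contribute.
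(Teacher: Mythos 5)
Your proposal is correct, and it splits into a direction that coincides with the paper's and one that is genuinely different. For the implication ``$f\in\cin(M)^N$ $\Rightarrow$ $f$ extends to a smooth $g$ on an open neighbourhood $U$'' (and its refinement when $N$ is closed), you argue exactly as the paper does: invoke the description of $\cin(M)^N$ from Remark~\ref{rmrk:2.66aug}, set $U=\bigcup_i U_i$, and glue the local data with a partition of unity; your closed case (reuse the pair $(U,g)$ and a two-element partition of unity subordinate to $\{U, M\smallsetminus N\}$, then extend $\rho g$ by zero) is a harmless variant of the paper's (which instead takes a partition of unity on $M$ subordinate to $\{U_i\}_{i\in I}\cup\{M\smallsetminus N\}$ and sums $\rho_i g_i$ directly). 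You also supply the pointwise verification that $g|_N = f$, which the paper asserts without detail, and your observation about why the average collapses to $f$ on $N$ is exactly the right point. Where you diverge is the ``easy'' direction: given $g\in\cin(U)$ with $g|_N=f$, the paper concludes abstractly that $f\in\cin(M)^N$ by noting that the inclusion $N\hookrightarrow M$ factors through $U$, that $\cin(U)$ is the subspace structure on $U$ (Remark~\ref{rmrk:cinU}), and then applying the factorization Lemma~\ref{lem:diff_subspaces_embed}; you instead localize by hand, multiplying $g$ by bump functions $\phi_x$ supported in $U$ so that $\phi_x g$ extends by zero to elements of $\cin(M)$ agreeing with $f$ near each point of $N$, and then cite Remark~\ref{rmrk:2.66aug} again. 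Your argument is more elementary and self-contained (it needs only bump functions and local compactness of $M$), while the paper's is shorter and exhibits the statement as an instance of the general universal property of induced differential structures; both are complete proofs.
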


\begin{proof}%
Suppose $f\in \cin(M)^N$.  Then there is an collection of
open sets $\{U_i\}_{i\in I}$ with $N\subset \bigcup_i U_i$ and
$\{g_i\}_{i\in I}\subset \cin(M)$ so that $f|_{U_i \cap N} =
g_i|_{U_i\cap N}$ for all $i$ (cf.\  Remark~\ref{rmrk:2.66aug}).  Let $U = \bigcup_i U_i$.  There is a
partition of unity $\{\rho_i\}_{i\in I}$ on $U$ subordinate to the cover $\{U_i\}_{i\in I}$. 
Consider $g := \sum \rho_i g_i \in \cin(U)$.  Then $g|_N= f$.

If $N$ is closed,  then $\{U_i\}_ {i\in I} \cup \{M\smallsetminus N\}$
is an open cover of $M$.
Choose a partition of unity $\{\rho_i\}_{i\in I} \cup \{\rho_0\}$
subordinate to { this} cover of $M$ (with $\supp \rho_0\subset
M\smallsetminus N$). 
and again set $g := \sum_{i\in I} \rho_i g_i$.
Then $g$ is a smooth function on all of $M$, and $g|_N = f$.

Conversely suppose $U$ is an open subset of $M$ containing $N$.  Then
the inclusion $N\hookrightarrow M$ (which is smooth) factors through
the inclusion  $N\hookrightarrow   U$.  Since $\cin(U)$ is  the
subspace differential structure on $U$ (cf.\ Remark~\ref{rmrk:cinU})
Lemma~\ref{lem:diff_subspaces_embed} implies that 
the inclusion  $(N, \cin(M)^N) \hookrightarrow  (U, \cin(U))$ is smooth.
In pariticular, for any $g\in \cin(U)$ the restriction $g|_N$ is in $\cin(M)^N$.
\end{proof}

\begin{remark}
Lemma~\ref{lem:2.27} holds in greater generality.  The proof does not
really use the fact that $M$ is a manifold, it only needs the
existence of the partition of unity.   Partitions of unity do exist for second
countable Hausdorff locally compact differential spaces; see \cite{Sn}.
\end{remark}

\subsection{$\cin$-schemes}

We next discuss $\cin$-schemes.  The definition is 
is due to Dubuc
\cite{Dubuc}.  Dubuc used $\cin$-schemes to construct a model of
synthetic differential geometry.   Algebraic geometry over $\cin$-rings  has been developed further by Joyce \cite{Joy}.  

There is an evident global sections functor $\Gamma: \LCRS \to \op{\cring}$
from the category of local $\cin$-ringed spaces to the category opposite  of the category
$\cin$-rings.  It is given  by 
\[
\Gamma\left( (N, \scB)) \xrightarrow{(f,f_\#)} (M,\scC) \right) := 
\left(\scC(M)\xrightarrow{f_\#} (f_*\scB) (M) = \scB(f\inv (M)) = \scB (N)\right).
\]
Thanks to a
theorem of Dubuc \cite{Dubuc}, the global section functor $\Gamma$ has
a right adjoint
\[
  \Spec: \op{\cring} \to \LCRS ,
\]
the {\sf spectrum functor}.  One way to construct $\Spec$ is to proceed in
the same way as in algebraic geometry.  Namely, given a $\cin$-ring
$\scC$ one considers the set $\cX_\scC$ of all $\R$-points of $\scC$.
Then $\cX_\scC$ is given a topology and finally one constructs a sheaf
$\scO_\scC$ of local $\cin$-rings on the space $\cX_\scC$.  See
\cite{Joy} for details.

\begin{definition} An affine {\sf $\cin$-scheme} is a local
  $\cin$-ringed space isomorphic to $\Spec(\scC)$ for some $\cin$-ring
  $\scC$.

The {\sf category of affine $\cin$-schemes} $\Aff$ is the essential
image of the functor $\Spec$.
\end{definition}

\begin{remark}
For any manifold $M$ the local $\cin$-ringed space $(M,\cin_M)$ is isomorphic to the affine scheme $\Spec(\cin(M))$ (see \cite{Dubuc} and \cite{Joy}).
In particular the real and complex projective spaces $\R P^n$ and $\C
P^n$ are affine $\cin$-schemes.   For this reason the need for {\em
projective } $\cin$-schemes is not obvious.
\end{remark}

\begin{remark}
  Unlike the spectrum functor in algebraic geometry
  $\Spec: \op{\cring} \to \Aff$ is not an equivalence of categories.
  For example $\Spec$ assigns the empty space to any $\cin$-ring with
 no $\R$-points, such as
  $\cin(\R^n)/\cin_c (\R^n)$ (here as before $\cin_c(\R^n)$ is the ideal of
  compactly supported functions) or the zero $\cin$-ring 0.
\end{remark}

\begin{remark}  The relation between $\cin$-schemes and differential
  spaces is a bit messy.   For a $\cin$-scheme to be a differential
  space its structure sheaf must be a sheaf of functions.  In
  particular it can have no nilpotents.  As as result
  many (most?) $\cin$-schemes are not differential spaces.

  On the other hand, given a differential space $(M,\scF)$ any point
  $x\in M$ gives rise to the $\R$-point $ev_x:\scF\to \R$, the
  evaluation at $x$.  However there are differential spaces where the
  set of $\R$-points is bigger than their  set of (ordinary) points,
  see \cite{CSt}.

  Worse yet, a differential space may have too many $\R$-points. Here
  is an extreme example.  Take any set $M$. Give it the  trivial topology.
  The set $\scF$ of constant $\R$-valued functions on $M$ is a
  differential structure.  As a $\cin$-ring $\scF$ is isomorphic to
  $\R$ and consequently has only one $\R$-point.

In other words,  while the topology on the differential space $(M,
  \scF)$ is uniquely determined by its differential structure $\scF$,
  the set $M$ itself is not uniquely determined by $\scF$.  Compare
  this with  affine $\cin$-schemes,  where the set of
  points of a local $\cin$-ringed space $\Spec(\scC)$ is uniquely determined
  by the $\cin$-ring $\scC$.  
\end{remark}

\subsection{Graded objects}
\begin{remark}
There are two conventions in the literature regarding ($\N$- or $\Z$-)
graded objects in an abelian category: they can be viewed either as
direct sums or as sequences of objects. Kamps and Porter
\cite[p.~207]{KP}, for instance, use the second convention.

Depending on the category one is working in the choice of the
convention may or may not matter. For example, a $\Z$-graded real
Lie algebra $\fg$ can be viewed as a direct sum
$\fg=\oplus_{n\in \Z} \, \fg_n$ together with an $\R$-bilinear map
(the bracket) $\fg\times \fg\to \fg$ satisfying the appropriate
conditions.  It can equivalently be viewed as a sequence of vector
spaces $\{\fg_n\}_{n\in \Z}$ together with a family of linear maps
$\{\fg_n \otimes \fg_m \to \fg_{n+m}\}_{n,m\in \Z}$ (again, subject to
conditions).  So for graded
Lie algebras the two conventions are equivalent.

However, in the case of graded sheaves of modules, the two approaches are
not equivalent: see  Remark~\ref{rmrk:graded_sheaves}.
Since we will need the (pre-)sheaf versions of various graded objects,
we use the second convention throughout: a graded object is a sequence
of objects.  Since graded derivations of non-negatively graded algebras may have negative degrees, 
our graded objects are all graded by the integers $\Z$.  Note that any algebra 
graded by the natural numbers $\N$ can be viewed as a $\Z$-graded
object by placing zeros in negative degrees.
\end{remark}

The following definition is a variation on  the conventional definition of
a commutative graded algebra.   %
\begin{definition} A {\sf commutative graded algebra} (CGA) over a
  $\cin$-ring $\scC$ is a sequence of $\scC$-modules $\{B^k\}_{k\in \Z}$
  together with a sequence of $\scC$-bilinear maps $\wedge: B^k\times
  B^\ell \to B^{k+\ell} $ (or, equivalently of $\scC$-linear maps   $\wedge: B^k\otimes_{\scC}
  B^\ell \to B^{k+\ell} $; we don't notational distinguish between
  the two) so that $\wedge $ is associative and graded
  commutative. That is for all $k,\ell, m\in \Z$ and all $a\in B^k$,
  $b\in B^\ell$ and $c\in B^m$
  \begin{enumerate}
  \item $(a\wedge b) \wedge c = a\wedge (b\wedge c)$ and
  \item $b\wedge a = (-1)^{k\ell} a \wedge b$ (Koszul sign convention).
  \end{enumerate}
  We denote such a CGA by $(B^\bullet, \wedge)$. We write $b\in
  B^\bullet$ if $b\in B^k$ for some $k\in \Z$.
  We refer to elements of $B^k$ as {\em homogeneous elements of degree
    k} and write $|b| =k$ to indicate that $b\in B^k$.
\end{definition}

\begin{example} \label{ex:exterior}
Let $\scC$ be a $\cin$-ring and  $\scM$ an $\scA$-module. Then the
exterior algebra $\Lambda ^\bullet \scM =\{ \Lambda^k \scM\}_{k\in \Z}$
  together with the usual wedge product $\wedge$ 
  is a CGA over $\scC$.  Note that the exterior powers are taken over
  $\scC$. Note also that $\Lambda^0 \scM = \scA$ and that
  $\Lambda^k \scM=0$ for $k<0$ by convention.
\end{example}  
\begin{remark}
Commutative graded algebras (CGAs) are really graded-commutative algebras:
that is, they are graded and they are commutative up to the appropriate sign;
 they are not commutative on the nose.    This is why sometimes they are also 
called GCAs.
\end{remark}

\begin{definition}[The category $\CGA$ of commutative graded algebras] \label{def:catCGA}
Recall that modules over $\cin$-rings form a category $\Mod$ (Definition~\ref{def:mod}).
Consequently commutative graded algebras over $\cin$-rings form a
category as well.  We denote it by $\CGA$.  The objects of this category are pairs $(\scC,
(\scM^\bullet, \wedge))$,  where $\scC$ is a $\cin$-ring and
$(\scM^\bullet, \wedge)$ is a CGA over $\scC$.  A morphism from $(\scC_1,
(\scM_1^\bullet, \wedge))$ to $(\scC_2,
(\scM_2^\bullet, \wedge))$ in $\CGA$ is a pair $(\varphi, f
= \{f^k\}_{k\in \Z})$
where, for each $k$,  $(\varphi, f^k): (\scC_1, \scM_1^k)\to (\scC_2, \scM_2^k)$
is a morphism in the category $\Mod$ of modules and, additionally, $f$ preserves the
multiplication: $f(m\wedge m') = f(m)\wedge f(m')$.
\end{definition}

\begin{definition} \label{def:gr-der}
Let $(B^\bullet, \wedge)$ be a commutative graded
algebra over a $\cin$-ring $\scC$.  A {\sf graded derivation of degree
  $k\in \Z$} is a sequence of  $\R$-linear
maps 
$X= \{X^\ell :B^\ell \to B^{\ell+k}\}_{\ell\in \Z}$  so that for all homogeneous
elements $x,y\in B^\bullet$
\[
X(x\wedge y) = X(x) \wedge y + (-1)^{|x|k} x \wedge X(y),
\]
where $|x|$ is the degree of $x$, i.e., $x\in B^{|x|}$ (to reduced the
clutter the $X$ on the left stands for $X^{|x|+|y|}$ and so on). As
always our signs follow the Koszul sign convention.
\end{definition}

\begin{example}
Let $M$ be a smooth manifold, $\scC = \cin (M)$, the $\cin$-ring of
smooth functions.  Then the exterior derivative
$d:\bOmega_{dR}^\bullet(M)\to \bOmega_{dR}^{\bullet +1}(M)$ is a graded derivation
of degree +1 of the CGA $(\bOmega_{dR}^\bullet (M), \wedge)$ of global de
Rham differential forms.

Any vector field $v:M\to TM$ give rise to a derivation $\imath_v:
\bOmega_{dR}^\bullet(M)\to \bOmega_{dR}^{\bullet -1}(M)$ of degree -1:
it is the
contraction with $v$. The ordinary Lie derivative $\cL_v: \bOmega_{dR}^\bullet(M)\to
\bOmega_{dR}^{\bullet}(M)\,$ with respect to the vector field $v$ is a
degree 0 derivation of $(\bOmega_{dR}^{\bullet}(M), \wedge)$.

\end{example}

The following lemma will be useful when working with contractions.
\begin{lemma} \label{lem:1level0}
  Let $(\scM^\bullet, \wedge)$ be a CGA over a $\cin$-ring $\scC$ with
  $\scM^k = 0$ for all $k<0$ and with $\scM^0 = \scC$. Any degree -1
  derivation $X:\scM^\bullet \to \scM^{\bullet-1}$  is $\scC$-linear.
\end{lemma}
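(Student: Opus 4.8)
The plan is to reduce the claimed $\scC$-linearity to a single application of the graded Leibniz rule, exploiting that $X$ lowers degree by one and that $\scM^{-1}=0$. First I would recall that a degree $-1$ derivation is by definition a sequence of $\R$-linear maps (Definition~\ref{def:gr-der}), so additivity of $X$ is automatic; the only content of the lemma is to promote $\R$-linearity to $\scC$-linearity, i.e.\ to establish $X(c\cdot m)=c\cdot X(m)$ for every $c\in\scC$ and every homogeneous $m\in\scM^\ell$. The preliminary step is to identify the $\scC$-module action with the wedge product: since $\scM^0=\scC$ and the map $\wedge\colon \scM^0\times\scM^\ell\to\scM^\ell$ is $\scC$-bilinear with $1\wedge m=m$, one gets $c\cdot m=c\wedge m$ for $c\in\scC=\scM^0$. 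Thus the desired $\scC$-linearity of $X$ is the same statement as $X(c\wedge m)=c\wedge X(m)$.

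Next I would apply the graded Leibniz rule with $x=c\in\scM^0$ (so $|c|=0$), with $y=m$, and with the derivation degree $k=-1$:
\[
X(c\wedge m)=X(c)\wedge m+(-1)^{|c|\,k}\,c\wedge X(m)=X(c)\wedge m+c\wedge X(m),
\]
where the sign is $(-1)^{0\cdot(-1)}=+1$. The crucial observation is that $X(c)$ lives in $\scM^{0-1}=\scM^{-1}$, which is $0$ by hypothesis; hence $X(c)=0$ and the first term drops out, leaving $X(c\wedge m)=c\wedge X(m)$, exactly what we wanted.

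The computation itself is immediate, so the only point that requires care — and the step I would treat as the genuine obstacle — is the bookkeeping in the first paragraph: verifying that the externally given $\scC$-module structure on each $\scM^\ell$ really does agree with multiplication by degree-$0$ elements under the identification $\scM^0=\scC$. This rests on the unitality of $\wedge$ (that $1\in\scM^0$ acts as the identity), after which $\scC$-bilinearity yields $c\wedge m=(c\cdot 1)\wedge m=c\cdot(1\wedge m)=c\cdot m$. Everything else is forced by the vanishing $\scM^{-1}=0$, so once this identification is in place the proof is complete.
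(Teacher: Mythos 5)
Your proof is correct and takes essentially the same route as the paper: apply the graded Leibniz rule of Definition~\ref{def:gr-der} with the degree-$0$ element $c$, note that the sign $(-1)^{0\cdot(-1)}=+1$, and observe that $X(c)\in\scM^{-1}=0$ kills the first term. The paper's proof is exactly this one-line computation, silently taking for granted the identification of the $\scC$-module action with wedging by elements of $\scM^0=\scC$, which you spell out explicitly.
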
  
\begin{proof}
For all $a\in \scC =\scM^0$ and all $m\in
  \scM^\bullet$
\[
X(a\wedge m) = X(a) \wedge m + (-1)^{0\deg (m)} a \wedge X(m) = a
\wedge X(m)
\]
since $X(a) \in \scM^{-1}= 0$.
\end{proof}

\begin{remark}
  The set of all graded derivations of degree $k$ of a CGA
  $(B^\bullet, \wedge)$ over a $\cin$-ring $\scC$ is a vector space
  over $\R$ and an $\scC$-module.  The operations are defined
  pointwise.  For example, if $X=\{X^\ell\}_{\ell \in \Z} $ is a
  derivation and $a\in \scC$, $b\in B^\bullet $ then
  $(aX) (b) := a(X (b)) $ and so on.
\end{remark}

\begin{definition}
  A commutative {\sf differential } graded algebra (CDGA) over a $\cin$-ring
  $\scC$ is a commutative graded algebra $(\scB^\bullet, \wedge)$
  together with a degree +1 derivation $d \in \Der^1(\scB^\bullet)$
  called a {\sf differential}, which squares to 0. That is, 
  $d\circ d =0$.

  We write $(\scB^\bullet, \wedge, d)$ to denote such a CDGA.
\end{definition}

\begin{definition}[The category $\CDGA$ of commutative differential
  graded algebras] \label{def:catCDGA}  The objects of the category
  $\CDGA$ of commutative differential graded algebras are  triples 
  $(\scC, (\scM^\bullet, \wedge), d)$, where $\scC$ is a $\cin$-ring,
  $(\scM^\bullet, \wedge)$ is a CGA over $\scC$ and $d:\scM^\bullet
  \to \scM^\bullet$ is a degree 1 derivation of   $(\scM^\bullet,
  \wedge)$.  Equivalently the objects of $\CDGA$ are pairs   $(\scC,
  (\scM^\bullet, \wedge, d))$, where $\scC$ is a $\cin$-ring and $
  (\scM^\bullet, \wedge, d))$ is a CDGA over $\scC$.

  A morphism from  $(\scC_1, (\scM_1^\bullet, \wedge), d)$ to
  $(\scC_2, (\scM_2^\bullet, \wedge), d))$ is a morphism $(\varphi,
  f): (\scC_1, (\scM_1^\bullet, \wedge)) \to (\scC_2, (\scM_2^\bullet,
  \wedge))$ of commutative graded algebras with the additional
  property that $f$ commutes with the differentials.  
\end{definition}

We now turn out attention to $\Z$-graded presheaves and sheaves.
\begin{definition} \label{def:graded_ob}
  A {\sf $\Z$-graded  presheaf $\scP^\bullet$} of
$\scA$-modules over a local $\cin$-ringed space
$(M,\scA)$ is a sequence $\{\scP^n\}_{n\in \Z}$ of presheaves of
$\scA$-modules.  Similarly, a 
{\sf $\Z$-graded sheaf $\scS^\bullet $} of
$\scA$-modules over a local $\cin$-ringed 
$(M,\scA)$ is a sequence $\{\scS^n\}_{n\in \Z}$ of sheaves of
$\scA$-modules.
\end{definition}

\begin{remark} \label{rmrk:graded_sheaves}
Since the category of modules over a $\cin$-ring has coproducts so
does the category of presheaves of modules over a fixed local
$\cin$-ringed space $(M, \scA)$.  Moreover coproducts of presheaves 
are computed object-wise.  Consequently given a sequence 
$\{\scS^n\}_{n\in \Z}$ of presheaves of $\scA$-modules their coproduct
$ \oplus_{n\in \Z} \scS_n$ is defined by 
\[
\left( \oplus_{n\in \Z} \scS^n \right) (U)= \oplus_{n\in \Z} \scS^n(U),
\]
for all open sets $U\subset M$.  Here on the right the direct sum is
taken in the category of $\scA(U)$-modules.  Since the sheafification
functor $\sh:\Psh_M\to \Sh_M$ from presheaves to sheaves is left
adjoint to inclusion functor $i:\Sh_M\hookrightarrow \Psh_M$, the
functor $\sh$ preserves colimits and, in particular, coproducts.  Therefore if
$\{\scS^n\}_{n\in \Z}$ is a sequence of sheaves, then their coproduct (``direct sum'')
in $\Sh_M$ exists and equals $\sh (\oplus_{n\in \Z} \scS^n )$.  In
particular for an open set $U\subset M$, the $\scA(U)$-module
$\sh (\oplus_{n\in \Z} \scS^n ) (U)$ is, in general, different from
$\oplus_{n\in \Z} \scS^n (U)$. See Example~\ref{ex:pre-direct_sum}
below.  Of course $\sh (\oplus_{n\in \Z} \scS^n )$ (with the
appropriate structure maps) {\em is} a coproduct in $\Sh_M$, so one
can view $\Z$-graded objects in $\Sh_M$ as $\Z$-indexed coproducts.
One can even make sense of a map
$f: \sh (\oplus_{n\in \Z} \scS^n ) \to \sh (\oplus_{n\in \Z} \scS'^n
)$ as having a  degree $k$: we require that there are maps
$f^m:\scS^m\to {\scS'}^{m+k}$ of sheaves of modules making the diagrams
\[
\xy
(-15,10)*+{\sh (\oplus_{n\in \Z} \scS^n ) }="1";
(15,10)*+{\sh (\oplus_{n\in \Z} {\scS'}^n) }="2";
 (-15,-5)*+{\scS^m}="3";
(15,-5)*+{{\scS'}^{m+k}}="4";
{\ar@{->}^{f} "1";"2"};
{\ar@{->}^{\imath^m} "3";"1"};
{\ar@{->}_{{\imath'}^{m+k}} "4";"2"};
{\ar@{->}^{f^m} "3";"4"};
\endxy
\]
commute for all $m$ (here $\{\imath^m:\scS^m\to \sh (\oplus_{n\in \Z}
\scS^n ) \}_{m\in\Z}$  and $\{{\imath'}^m:{\scS'}^m\to \sh (\oplus_{n\in \Z}
{\scS'}^n ) \}_{m\in\Z}$ are the structure maps of the coproducts).
However it is not clear what the value of
the sheaf $\sh (\oplus_{n\in \Z} \scS^n )$ on some open subset
$U\subset M$ actually is in concrete situations.  This is why we use
Definition~\ref{def:graded_ob}.
\end{remark}

\begin{example}\label{ex:pre-direct_sum}  We  construct an example
  of a $\Z$-indexed collection of sheaves on a manifold so that their
  direct sum (as a presheaf) is not a sheaf.

  Let $B^n$ denote the open $n$-ball in $\R^n$ of radius 1 centered at
  0.  Let $M= \bigsqcup_{n\geq 0} B^n$, the coproduct of the balls $B^n$'s.  The
  space $M$ is a second countable Hausdorff manifold of unbounded
  dimension.  Let $\bOmega^k_{dR,M}$ denote the sheaf of ordinary
  differential $k$-forms on $M$ (with $\bOmega^k_{dR,M} \equiv 0$ for $k<0$).
Form the  direct sum (in the category of presheaves)
\[
\bOmega^\bullet_{dR,M}:= \oplus _{k\in \Z}\bOmega^k_{dR,M}.
\]
This presheaf {\it is not a sheaf}.  Here is one way to see it: for
each $n$ pick a volume form $\alpha_n\in \bOmega_{dR}^n (B^n)$.  The
collection of open balls $\{B^n\}_{n\geq 0}$ is an open cover of $M$
and all the elements of the cover are mutually disjoint.  If
$\oplus _{k\in \Z }\bOmega^k_{dR,M}$ were a sheaf, we would have an
element $\alpha\in \oplus _{k\geq 0 }\bOmega^k_{dR,M}$ so that
$\alpha|_{B^n} = \alpha_n$ for all $n$.  But any element of the direct
sum has a bounded degree, so no such $\alpha$ can exist.  Therefore
the direct sum $\oplus _{k\geq 0 }\bOmega^k_{dR,M}$ of sheaves 
(computed in the category $\Psh_M$ of presheaves!) 
 is not a sheaf.
\end{example}  

\begin{definition} \label{def:12.16} A {\sf presheaf of commutative
    graded algebras (CGAs)} $ (\scM^\bullet, \wedge)$ over a
  $\cin$-ringed space $(M, \scA)$ is a sequence of $\scA$-modules
  $\{\scM^k\}_{k\in \Z}$ together with a sequence of maps of
  presheaves $\wedge: \scM^k\otimes \scM^\ell \to \scM ^{k+\ell} $ of
  $\scA$-modules so that $\wedge $ is associative and graded
  commutative. That is, 
 we require that the appropriate diagrams in the category
of presheaves of $\scA$-modules commute.

Equivalently a presheaf $ (\scM^\bullet, \wedge)$ of CGAs over $(M, \scA)$ is a functor
\[
\op {\Open(M)}\to \CGA
\]
That is, for all open sets $U\subset M$ the pair
$ ( \{\scM^k(U)\}_{k\in \Z}, \wedge_U) $ is CGA over the $\cin$-ring
$\scA(U)$ and the restriction maps
$r^U_V: ( \scM^\bullet(U), \wedge_U) \to ( \scM^\bullet(V), \wedge_V)
$ are maps of CGAs (cf. Definition~\ref{def:catCGA}).

\end{definition}

\begin{definition} \label{def:12.16-1} A {\sf sheaf of commutative
    graded algebras (CGAs)} $ (\scM^\bullet, \wedge)$ over a
  $\cin$-ringed space $(M, \scA)$ is a presheaf of CGAs $
  (\scM^\bullet, \wedge)$  with the property that $\scM^k$ is a sheaf
  for each $k\in \Z$.
\end{definition}

\begin{definition} \label{def:gr-der_presh}
A {\sf graded derivation $X$ of degree $k$} of a presheaf $(\scM^\bullet
=\{\scM^k\}_{k\in \Z},
\wedge)$ of CGAs over a
  $\cin$-ringed space $(M, \scA)$  is a collection of maps of presheaves $\{X^\ell:
\scM^\ell \to \scM^{\ell +k}\}_{\ell \in \Z}$ of $\scA$-modules so that
\[
X_U: \scM^\bullet(U) \to \scM^{\bullet +k}(U)
\]
is a degree $k$ derivation of the CGA $(\scM^\bullet (U),
\wedge)$ of $\scA(U)$-modules
for each open
set $U\in \Open(M)$.     Equivalently $X =\{X^\ell\}_{\ell\in \Z}$ makes the appropriate diagrams
in the category of presheaves of $\scA$-modules commute.
\end{definition}

\begin{definition}
A {\sf presheaf of commutative differential graded algebras (CDGAs)} $
(\scM^\bullet, \wedge, d)$ over a $\cin$-ringed space $(M,\scA)$ is a
presheaf $ (\scM^\bullet, \wedge)$ of CGAs over $(M,\scA)$ together
with a sequence of maps $\{d:\scM^k \to \scM^{k+1}
\}_{k\in \Z}$ of presheaves of real vector spaces so that for each
open set $U\subset M$ the triple $(\scM^\bullet(U), \wedge_U, d_U)$ is
a CDGA over the $\cin$-ring $\scA(U)$.

Equivalently a presheaf CDGAs over a $\cin$-ringed space $(M, \scA)$
is a functor from the category $\op{\Open(M)}$ of open subsets of $M$
to the category $\CDGA$ of commutative differential graded algebras
(cf.\ Definition~\ref{def:catCDGA} ).
\end{definition}

\begin{definition}
A {\sf sheaf of CDGAs} is a presheaf of CDGAs
$(\scS^\bullet, \wedge, d)$ so that $\scS^k$ is a sheaf for all
$k\in \Z$.
\end{definition}

\section{K\"ahler differentials of $\cin$-rings} \label{sec:1form} In
this section we recall the construction of the $\cin$-ring analogue of
K\"ahler differentials of commutative algebra.

\begin{definition}
 A {\sf module of $\cin$-K\"ahler
differentials} over a $\cin$-ring $\scC$ is an $\scC$-module $\Omega^1_\scC$
together with a $\cin$-derivation $d_\scC:\scC\to
\Omega^1_\scC$, called the {\sf universal derivation}, with the following  property:  for any $\scC$-module $\scN$ and any derivation
$X:\scC\to \scN$ there exists a unique map of modules $\varphi_X:
\Omega^1_A\to \scN$ with
\[
\varphi_X \circ d_\scC = X.
 \] 
\end{definition}

\begin{remark}
  The module of $\cin$-K\"ahler differentials is not be confused with
  the module of ordinary algebraic K\"ahler differentials, since the
  two modules are quite different.  For example G\'omez \cite{Gomez}
  shows that the cardinality of a set of generators of the module of algebraic K\"ahler
  differentials $\Omega^1_{\cin(\R^n), alg}$ of $\cin(\R^n)$ is at
  least the cardinality of the reals.  Compare this with the fact that
  the module of $\cin$-K\"haler differentials $\Omega^1_{\cin(\R^n)}$
  is isomorphic, as a $\cin(\R^n)$-module to the module of de Rham
  1-forms $\bOmega^1 _{dR}(\R^n)$ --- see Example~\ref{ex:Krn}, which
  has rank $n$.  Note
  that $\Omega^1_{\cin(\R^n)} \simeq \bOmega^1 _{dR}(\R^n)$ is a free
  $\cin(\R^n)$-module.

\end{remark}

\begin{remark} \label{rmrk:3.3}
Let $\scC$ be a $\cin$-ring.
The universal property of the derivation $d:\scC\to \Omega^1_\scC$ says
that for any $\scC$-module $\scM$, the map of modules
\[
  d^*:\Hom (\Omega^1_\scC, \scM)\to \cDer(\scC, \scM),\quad
  d^*(\varphi): = \varphi \circ d 
\]
is an isomorphism (of $\scC$-modules).

Consider now the special case of $\scM = \scC$.  Denote the inverse of
$d^*$ by $\imath$.  In this notation, for any derivation $v\in
\cDer(\scC)$ there is a unique map of $\scC$-module
\[
\imath_v : \Omega^1_\scC\to \scC
\]  
so that
\[
\imath_v ( d_\scC c  ) = v(c)
\]
for all $c\in \scC$.  We think of $\imath_v \alpha$ as the {\em
  contraction} of a derivation $v$ and a differential $\alpha \in
\Omega^1_\scC$.  We then have a canonical map
\[
\psi: \Omega^1_\scC \to \Hom (\cDer(\scC), \scC)
\]  
with
\[
\psi(\alpha) v  = \imath _v \alpha 
\]  
for all $v$ and $\alpha$.   In the case where $M$ is a manifold and
$\scC = \cin(M)$, the space of derivations $\cDer(\cin(M)$ is the
space of vector fields on $M$ and can be identified with the space
$\Gamma(TM)$ of sections of the tangent bundle.  On the other hand,
the space $\Omega^1_{\cin(M)}$ of $\cin$-K\"ahler differentials  is
the space of ordinary 1-forms $\bOmega^1_{dR} (M) =\Gamma(T^*M)$ (cf.\
Remark~\ref{rmrk:5.12} below) and the map $\psi$ is then the map
\[
\psi: \Gamma (T^*M) \to \Hom (\Gamma(TM), \cin(M)), \quad \psi
(\alpha) (v) = \imath_v \alpha,
\]  
which is an isomorphism of $\cin(M)$-modules.  This is because
for a vector bundle $E\to M$ there is a canonical isomorphism
$\Gamma(E^*) \to \Hom_{\cin(M)} (\Gamma(E), \cin(M))$ of
$\cin(M)$-modules (here as before $\Gamma(E^*)$, $\Gamma(E)$ are the modules of global
sections of $E^*$ and  $E$, respectively).  See \cite[Proposition~7.5.4]{Conlon}.

In general the map $\psi $ is not an isomorphism  See
Example~\ref{ex:xy=0} of a differential space where $\psi$ is not
injective.   For this reason I think one should not define the space
of 1-forms on a 
on differential space $(M, \scF)$ to be $\Hom(\cDer(\scF), \scF)$.
\end{remark}  

We'll prove existence of K\"ahler differentials in
Theorem~\ref{thm:DK} below.  The result is  known --- it was proved
by Dubuc and Kock \cite{DK} over 30 years ago.   The proof we will
present is essentially the one in \cite{Joy}.   Our excuses for
providing the proof are that we (1) are trying to keep the paper
self-contained and (2)  we will need the details of the proof for the
construction of the  $\cin$-algebraic de Rham complex of a $\cin$-ring. The
uniqueness of K\"ahler differentials  follows easily from their
universal properties --- see Remark~\ref{rmrk:Kd:unique}.

We will compute two examples before proving Theorem~\ref{thm:DK}.   But
first, a few observations and remarks.

\begin{lemma} \label{lem:B.2}
Let $\scA$ be a $\cin$-ring, $w:\scA\to \scA$ a derivation and $a, b\in
\scA$ two elements with $ab=1$ (i.e., $a$ is invertible and $b$ is the
inverse of $a$).  Then
\begin{equation} \label{eq:B2}
w(b) = -b^2 w(a).
\end{equation} 
\end{lemma}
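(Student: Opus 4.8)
The plan is to differentiate the relation $ab = 1$ using the Leibniz rule. Every derivation of the underlying $\R$-algebra satisfies this rule, and any $\cin$-derivation is in particular such a derivation (as noted in the remark following Definition~\ref{def:der2}), so the argument applies whether $w$ is taken to be a $\cin$-derivation or merely an $\R$-algebra derivation. First I would record that $w$ annihilates the unit: applying $w$ to $1 = 1\cdot 1$ and using the product rule gives $w(1) = w(1)\cdot 1 + 1\cdot w(1) = 2\,w(1)$, whence $w(1) = 0$. (Equivalently, $w$ is $\R$-linear and therefore kills all constants.)

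Next I would apply $w$ to both sides of the defining relation $ab = 1$. The right-hand side yields $w(1) = 0$ by the previous step, while the Leibniz rule on the left yields $w(a)\,b + a\,w(b) = 0$. Since the $\R$-algebra underlying $\scA$ is commutative, this rearranges to $a\,w(b) = -b\,w(a)$.

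Finally I would multiply this identity on the left by $b$ and use $ba = ab = 1$ to cancel:
\[
w(b) = (ba)\,w(b) = b\,\bigl(a\,w(b)\bigr) = b\,\bigl(-b\,w(a)\bigr) = -b^2\,w(a),
\]
which is exactly the claimed equation~\eqref{eq:B2}.

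The computation is entirely routine. The only points requiring a moment's care are justifying that $w(1) = 0$ and keeping careful track of commutativity when rearranging and cancelling; beyond that there is no genuine obstacle, so I do not expect any hard step.
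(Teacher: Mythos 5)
Your proposal is correct and follows exactly the same route as the paper's own proof: establish $w(1)=0$, apply the Leibniz rule to $ab=1$ to get $w(a)\,b + a\,w(b) = 0$, and rearrange. The paper compresses the final cancellation into ``the result follows,'' whereas you spell out the multiplication by $b$; the substance is identical.
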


\begin{proof}
Since $w$ is a derivation, $w(1) = 0$.  Hence $0 = w(ab) = w(a) b+ a\, w(b)$ 
and the result follows.
\end{proof}

\begin{remark} \label{rmrk:Kd:unique}
It follows from the definition of the module $\Omega^1_\scC$ of K\"ahler differentials
of a $\cin$-ring $\scC$  that the module has to be unique in the following
sense: if $\scN$ is another $\scC$ module and $d:\scC\to \scN$ is another derivation with the same
universal property as $d_\scC:\scC\to \Omega_\scC^1$ then there is a
unique isomorphism $\psi: \scN\to \Omega_\scC^1$ of $\scC$-modules so that
\[
   \psi \circ d = d_\scC.
 \]
 Following a common practice we may refer to {\em any} $\scC$-module
 $\scN$ with a derivation $d:\scC \to \scN$ that has the
 appropriate universal property as ``the'' module of K\"ahler
 differentials of the $\cin$-ring $\scC$.
\end{remark}

\begin{example}  \label{ex:Krn}
The module $\bOmega_{dR}^1(\R^n)$ of ordinary differential 1-forms together with the exterior
derivative $d:\cin(\R^n) \to \bOmega_{dR}^1(\R^n)$ is a module of K\" ahler
differentials over the $\cin$-ring $\cin(\R^n)$:
$\Omega^1_{\cin(\R^n)} = \bOmega_{dR}^1(\R^n)$.

\begin{proof}   We need to show that for any $\cin(\R^n)$-module
  $\scN$ and for any derivation $X: \cin(\R^n) \to \scN$ there is a
  unique map of modules $\varphi_X: \bOmega_{dR}^1(\R^n) \to \scN$ so that
  \[
\varphi_X (dg) = X(g)
\]
for all $g\in \cin(\R^n)$.   Since $\bOmega_{dR}^1(\R^n)$ is freely
generated, as a module, by the differentials $dx_1, \ldots, dx_n$
(where $x_1,\ldots,x_n:\R^n\to \R$ are the coordinate functions),
the map $\varphi_X$ given by
\[
\varphi_X (\sum a_idx_i) := \sum a_i X(x_i)
\]
is a well-defined map of $\cin(\R^n)$ modules.  And it is the unique map
from $\bOmega_{dR}^1(\R^n)$ to $\scN$ satisfying $\varphi_X(dx_i) = X(x_i)$.

It is not hard to check that the exterior derivative $d:\cin(\R^n) \to
\bOmega_{dR}^1 (\R^n)$ is a $\cin$-ring derivation: any $k>0$, any
$h\in \cin(\R^k)$ and any $f_1,\ldots, f_k\in \cin(\R^n)$
\[
 \begin{split} 
d\left(h_{\cin(\R^n)} (f_1,\ldots, f_k)\right) &= d (h\circ (f_1,\ldots, f_k))\\
&= \sum \partial_i(
h(f_1,\ldots, f_k))\, dx_i = \sum_i \left(\sum_j (\partial_j h)
  (f_1,\ldots, f_k) \cdot \partial_i f_j   \right)\, dx_i\\
&=\sum_j\left( (\partial_j h) (f_1,\ldots, f_k) \cdot \sum_i \partial_i
  f_j \,dx_i\right)
= \sum _j (\partial_j h)_{\cin(\R^n)} (f_1,\ldots, f_k) \cdot df_j.
\end{split}
\]
It remains to check that $\varphi_X(dg) = X(g)$ for all $g\in \cin(\R^n)$.
The $\cin$-ring $\cin(\R^n)$ is a free ring on the generators
$x_1,\ldots, x_n$ (see Definition~\ref{def:free} and Lemma~\ref{lem:rn-is-free}).
In particular $g_{\cin(\R^n)} (x_1,\ldots, x_n) = g$ for all
$g\in \cin(\R^n)$. Since $X$ is a derivation
\[
\begin{split}
X(g) = X(g_{\cin(\R^n)} (x_1,\ldots, x_n)) &= \sum (\partial_i
g)_{\cin(\R^n)}(x_1,\ldots, x_n) \, X(x_i) \\&= \sum (\partial_i
g)\, X(x_i) = \varphi_X (\sum (\partial_i g)\, dx_i) = \varphi_X (dg).
\end{split}
\]
Thus there is a unique map of $\cin(\R^n)$-modules
$\varphi_X: \bOmega_{dR}^1(\R^n) \to \scN$ which satisfies
$\varphi_X \circ d = X$.  It is given by
$\varphi_X (\sum a_i dx_i) = \sum a_i X(x_i)$.  
\end{proof}
\end{example}

\begin{example}\label{ex:kropen}
Let $U\subset \R^n$ be an open set.  The module $\bOmega_{dR}^1(U)$ of
1-forms together with the exterior derivative $d:\cin(U) \to \bOmega_{dR}^1(U)$ is a module of K\"ahler differentials
  over the $\cin$-ring $\cin(U)$. That is, $\Omega^1_{\cin(U)} = \bOmega_{dR}^1(U)$.
\begin{proof}
  As in the case of $\R^n$, the differentials
  $\{ d(x_i|_U) =dx_i|_U\}_{1\leq i \leq n}$ of the standard coordinate
  functions $\{x_i|_U:U\to \R^n\}_{1\leq i \leq n}$ freely generate the $\cin(U)$-module
  $\bOmega_{dR}^1(U)$ of 1-forms.  Therefore for any
  $\cin(U)$-module $\scN$ and any derivation $X:\cin(U) \to \scN$
  there is a $\cin(U)$-module map
  $\varphi_X: \bOmega_{dR}^1 (U) \to \scN$ defined uniquely by
\[
\varphi_X \left(\sum a_i\, dx_i|_U\right) = \sum a_i \,X(x_i|_U).
\]
Using the fact that
\[
df = \sum (\partial _i f)\, dx_i|_U
\]
for all $f\in \cin(U)$ it is not hard to show that the exterior
derivative $d:\cin(U) \to \bOmega_{dR}^1(U)$ is a $\cin$-derivation.    It remains
to check that
\[
\varphi_X \circ d = X.
\]
Since for any $g\in \cin(\R^n)$
\[
g|_U = g\circ (x_1|_U, \ldots, x_n|_U) = g_{\cin(U)}(x_1|_U, \ldots, x_n|_U) 
\]
\[
  d(g|_U) = d \left(g_{\cin(U)}(x_1|_U, \ldots, x_n|_U) \right) =
  \sum (\partial_i g) _{\cin(U)} (x_1|_U, \ldots, x_n|_U)  dx_i|_U =
  \sum_i \partial_i g|_U dx_i|_U.
 \] 
 Therefore
 \[
\varphi_X ( d(g|_U)) = \sum \partial_i g|_U \,X(x_i|_U) = X(g|_U),
\]
where in the last equality we used the fact that $X$ is a
$\cin$-derivation.
However not all functions in $\cin(U)$ are restrictions of smooth
functions on $\R^n$.    For arbitrary functions in $\cin(U)$ we argue
as follows.
By the Localization Theorem of Mu\~noz D\'iaz and Ortega
\cite{MO} (see also \cite[p.\ 28]{NGSS})
given a function $f\in \cin(U)$ there exist $g, h\in \cin(\R^n)$ so that
\[
 h|_U f= g|_U
\]
and $h|_U$ is invertible in $\cin(U)$.   Then
\[
df = d\frac{g|_U}{h|_U}  = \frac{1}{h|_U} d (g|_U) -
\frac{g|_U}{(h|_U)^2} d (h|_U).
  \]
We now write $g$ and $h$ for $g|_U$ and $h|_U$ to reduce the clutter.
We have 
  \[
    \varphi_X (df) = \varphi_X (\frac{1}{h} d g-\frac{g}{h^2} d h) =
    \frac{1}{h} X(g)-\frac{g}{h^2}  X(h) \overset{\textrm{ Lemma~\ref{lem:B.2}}}{=} X(\frac{g}{h}) = X(f).
  \]
  And we are done.
\end{proof}  
\end{example}

We now prove existence of $\cin$-K\"ahler differentials.  
\begin{theorem}[Dubuc and Kock\cite{DK}] \label{thm:DK}
For any $\cin$-ring $\scC$ there exists a module of K\"ahler
differentials $\Omega_\scC^1$ together with the $\cin$-ring derivation
$d_\scC:\scC\to \Omega_\scC^1$, which has the appropriate universal properties.
\end{theorem}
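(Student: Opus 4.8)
The plan is to construct $\Omega^1_\scC$ explicitly by generators and relations, encoding the $\cin$-derivation identity \eqref{eq:der} directly into the relations, and then to read off the universal property from the construction. Concretely, I would let $F$ denote the free $\scC$-module on the underlying set of $\scC$, writing $e_c\in F$ for the free generator indexed by $c\in\scC$, and let $K\subseteq F$ be the $\scC$-submodule generated by all elements of the form
\[
e_{f_\scC(c_1,\ldots,c_n)} - \sum_{i=1}^n (\partial_i f)_\scC(c_1,\ldots,c_n)\, e_{c_i},
\]
ranging over all $n\geq 0$, all $f\in\cin(\R^n)$, and all $c_1,\ldots,c_n\in\scC$. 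I would then set $\Omega^1_\scC := F/K$ and define $d_\scC:\scC\to\Omega^1_\scC$ by $d_\scC(c):=e_c+K$.

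The first thing to check is that $d_\scC$ is a $\cin$-derivation, and this is immediate from the definition of $K$: passing to the quotient, the displayed generator becomes exactly
\[
d_\scC(f_\scC(c_1,\ldots,c_n)) = \sum_{i=1}^n (\partial_i f)_\scC(c_1,\ldots,c_n)\, d_\scC(c_i),
\]
which is precisely \eqref{eq:der}. (Specializing to $f(x,y)=x+y$ and $f(x,y)=xy$ recovers additivity and the Leibniz rule, so $d_\scC$ is in particular an $\R$-algebra derivation; the case $n=0$ shows $d_\scC$ annihilates constants.)

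For the universal property, let $\scN$ be any $\scC$-module and $X:\scC\to\scN$ any $\cin$-derivation. Since $F$ is free, there is a unique $\scC$-module map $\tilde\varphi:F\to\scN$ with $\tilde\varphi(e_c)=X(c)$ for all $c$. The key step will be that $\tilde\varphi$ annihilates every generator of $K$: applying $\tilde\varphi$ to the displayed generator yields $X(f_\scC(c_1,\ldots,c_n))-\sum_i (\partial_i f)_\scC(c_1,\ldots,c_n)\,X(c_i)$, which vanishes precisely because $X$ satisfies \eqref{eq:der}. Hence $\tilde\varphi$ factors through a $\scC$-module map $\varphi_X:\Omega^1_\scC\to\scN$ with $\varphi_X\circ d_\scC=X$. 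Uniqueness is then automatic, since the $d_\scC(c)$ are the images of the free generators $e_c$ and therefore generate $\Omega^1_\scC$ as a $\scC$-module, so any $\scC$-linear map out of $\Omega^1_\scC$ is determined by its values $\varphi_X(d_\scC(c))=X(c)$.

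I do not expect a deep obstacle here; the difficulty is entirely bookkeeping at the interface between the two structures in play. The module $F$ and submodule $K$ live in the category of modules over the underlying $\R$-algebra of $\scC$ (Definition~\ref{def:2.6}), while the relations generating $K$ are phrased using the $\cin$-operations $(\partial_i f)_\scC$; the care required is to confirm that the relations are well-posed $\scC$-linear combinations of the $e_c$ and that indexing over \emph{all} $f\in\cin(\R^n)$ simultaneously imposes the $\R$-algebra identities and all chain-rule identities. Non-degeneracy of $\Omega^1_\scC$ is not needed for the universal property and is instead verified in concrete cases (as in Example~\ref{ex:Krn}), while uniqueness of $(\Omega^1_\scC,d_\scC)$ up to canonical isomorphism is a formal consequence of the universal property, as recorded in Remark~\ref{rmrk:Kd:unique}.
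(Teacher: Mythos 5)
Your proposal is correct and is essentially the paper's own proof: both construct $\Omega^1_\scC$ as the free $\scC$-module on symbols indexed by elements of $\scC$ modulo the submodule generated by the chain-rule relations, define $d_\scC$ as the class of the generator, and obtain existence and uniqueness of $\varphi_X$ by factoring the induced map on the free module through the quotient and using that the $d_\scC(c)$ generate. The only (immaterial) differences are notational ($e_c$ versus $\hd c$, $K$ versus $I$) and that you allow $n=0$ in the relations while the paper takes $n>0$.
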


Before we proceed with a proof of this theorem, we note that for any $\cin$-ring $\scC$
and any set $S$ there exists a free $\scC$-module $F[S]$ on the set
$S$.  The elements of $F[S]$ are finite formal linear combination
$a_1s_1+\cdots + a_n s_n$ for $n>0$, $a_1,\ldots, a_n\in \scC$, $s_1,\ldots,
s_n \in S$.

\begin{proof}[Proof of Theorem~\ref{thm:DK}] 
In commutative algebra there are two equivalent constructions of
K\"ahler differentials of a commutative ring.  Dubuc and Kock adapted
one of them to the $\cin$-ring setting (strictly speaking they adapted
it to the setting of Fermat theories, which include $\cin$-rings).  Joyce \cite{Joy} adapts the
other.  It will be convenient for us to follow Joyce's approach.

Consider the free $\scC$-module $F[\hd \scC]$
 where $\hd \scC$ is a
set of formal symbols $\hd b$, one symbol for each $b\in \scC$.  Thus
\[
F[\hd \scC] =\{ a_1\hd b_1+\cdots + a_n \hd b_n \mid n>0, a_1,\ldots,
a_n, b_1,\ldots, b_n \in \scC\}.
\]  
Let $I$ denote the submodule of $F[\hd \scC]$ generated by the set
\[
\scR:= \{ \hd (f_\scC(a_1,\ldots, a_n) )- \sum (\partial_i f)_\scC
(a_1,\ldots, a_n) \, \hd a_i \mid n>0, f\in \cin(\R^n), a_1,\ldots,
a_n \in \scC\},
\] 
that is, 
\[
  I = \langle \scR\rangle.
\]
Let
\[
\Omega^1_\scC : = F[\hd \scC] /I.
  \]
Then 
$\Omega^1_\scC$ is an $\scC$-module. Define $d_\scC : \scC\to
\Omega^1_\scC$ by
\[
d_\scC (a) = \hd a +I.
  \]
It is easy to check that $d_\scC$ is a $\cin$-derivation.    Note also
that the module $\Omega^1_\scC$ is generated by the set
$\{d_\scC(a)\}_{a\in \scC}$.

It remain to check the universal property of $d_\scC$. Let $\scN$ be
an $\scC$-module and $X:\scC\to \scN$ a $\cin$-derivation.  Since
$F[\hd \scC]$ is a free $\scC$-module on the set $\{\hd b\}_{b\in
  \scC}$, there is a unique map of $\scC$-modules $\hat{\varphi}:F[\hd
  \scC] \to \scN$ with
  \[
\hat{\varphi} (\hd b) = X(b).
\]
for all $b\in \scC$.
Since $X$ is a derivation,  $\hat{\varphi} (\alpha) = 0$ for all $\alpha $ in the
submodule $I$.  Consequently $\hat{\varphi}$ descends to a map of
$\scC$-modules
\[
\varphi_X:\Omega^1_\scC \to \scN.
  \]
  Then by construction of the map $\varphi_X$ and of the derivation $d: \scC\to \Omega^1_\scC$
  \[
\varphi_X(d_\scC b) = X(b) 
    \]
for all $b\in \scC$.  Since the set $\{d_\scC b\}_{b\in \scC}$
generates the module $\Omega^1_\scC$ of K\"ahler differentials, the map $\varphi_X$
is unique.
\end{proof}

\begin{remark} \label{rmrk:3.9aug}
If $M$ is a manifold then the $\cin(M)$-module $\bOmega^1_{dR}(M)$ of
ordinary one-forms on $M$ is the module of K\"ahler differentials
$\Omega_{\cin(M)}^1$.  This fact is stated in
\cite[Example~5.4]{Joy}.   In the more general case where $M$ is a
manifold with corners this fact is proved in \cite[Proposition~4.7.5]{FS}.
\end{remark}

\noindent
We next check that the universal derivation $d:\scC\to \Omega^1_\scC$ is natural
in the $\cin$-ring $\scC$.

\begin{lemma} \label{lem:5.12}
Let $f:\scC\to \scD$ be a map of $\cin$-rings.  Then there is a unique
map $\Lambda^1(f): \Omega_\scC^1\to \Omega^1_\scD$ of $\scC$-modules
making the diagram
\begin{equation} \label{eq:5:11}
\xy
(-15,10)*+{\scC}="1";
(15,10)*+{\Omega^1_\scC}="2";
(-15,-5)*+{\scD}="3";
(15,-5)*+{\Omega^1_\scD}="4";
{\ar@{->}^{d_\scC} "1";"2"};
{\ar@{->}_{f} "1";"3"};
{\ar@{-->}^{\Lambda^1(f)} "2";"4"};
{\ar@{->}_{d_\scD} "3";"4"};
\endxy
\end{equation}
commute.
\end{lemma}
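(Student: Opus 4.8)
The plan is to obtain $\Lambda^1(f)$ directly from the universal property of $\Omega^1_\scC$ recorded in Theorem~\ref{thm:DK} (equivalently Remark~\ref{rmrk:3.3}). The only thing one needs to produce is a $\cin$-derivation out of $\scC$ whose target is $\Omega^1_\scD$, viewed as a $\scC$-module; the universal derivation $d_\scC$ will then factor it uniquely, and that factorization will be the desired map.

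First I would turn $\Omega^1_\scD$ into a $\scC$-module by restriction of scalars along $f$: for $c\in\scC$ and $\alpha\in\Omega^1_\scD$ set $c\cdot\alpha := f(c)\cdot\alpha$. With this module structure the composite $X := d_\scD\circ f:\scC\to\Omega^1_\scD$ is $\R$-linear, and I claim it is a $\cin$-derivation in the sense of Definition~\ref{def:der2}.

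The key (and essentially only) computation is to verify the derivation identity~\eqref{eq:der} for $X$. Given $g\in\cin(\R^n)$ and $a_1,\dots,a_n\in\scC$, I would use that $f$ is a morphism of $\cin$-rings, hence preserves every operation, to rewrite $f(g_\scC(a_1,\dots,a_n)) = g_\scD(f(a_1),\dots,f(a_n))$; apply the fact that $d_\scD$ is itself a $\cin$-derivation to expand $d_\scD$ of the right-hand side; and then use the companion identity $f((\partial_i g)_\scC(a_1,\dots,a_n)) = (\partial_i g)_\scD(f(a_1),\dots,f(a_n))$ together with the module structure $c\cdot\alpha=f(c)\cdot\alpha$ to recognize the result as $\sum_i (\partial_i g)_\scC(a_1,\dots,a_n)\cdot X(a_i)$. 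This is exactly~\eqref{eq:der} for $X$, so $X$ is a $\cin$-derivation of $\scC$ with values in the $\scC$-module $\Omega^1_\scD$.

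Finally, applying the universal property of $(\Omega^1_\scC, d_\scC)$ to the $\scC$-module $\Omega^1_\scD$ and the derivation $X$ yields a unique map of $\scC$-modules $\varphi_X:\Omega^1_\scC\to\Omega^1_\scD$ with $\varphi_X\circ d_\scC = X = d_\scD\circ f$. Setting $\Lambda^1(f):=\varphi_X$ gives existence and makes the square~\eqref{eq:5:11} commute; uniqueness of $\Lambda^1(f)$ among $\scC$-module maps making the square commute is precisely the uniqueness clause of the universal property, since any such map is a factorization of $X$ through $d_\scC$. I expect no real obstacle here: the content is confined to the derivation check, which is a direct consequence of $f$ preserving the $\cin$-operations, and the remainder is a formal appeal to universality.
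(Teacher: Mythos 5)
Your proposal is correct and follows essentially the same route as the paper: the paper's proof also observes that $d_\scD \circ f$ is a $\cin$-derivation of $\scC$ with values in $\Omega^1_\scD$ (viewed as a $\scC$-module via $f$) and then invokes the universal property of $d_\scC$ for both existence and uniqueness. The only difference is that you spell out the verification of the derivation identity~\eqref{eq:der}, which the paper asserts without computation.
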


\begin{proof} 
The composite $d_\scD \circ f :\scC \to \Omega^1_\scD$ is a
$\cin$-derivation.  By the universal property of $d_\scC$ there is a
unique map of $\scC$-modules $\Lambda^1(f): \Omega^1_\scC \to
\Omega^1_\scD$ making the diagram \eqref{eq:5:11} commute.
\end{proof}  
\begin{remark}
The notation $\Lambda^1(f)$ is chosen to be consistent  with the notation of Proposition~\ref{prop:6.5}.
\end{remark}
\noindent The next lemma shows that just like in the case of
commutative rings and K\"ahler differentials, the module of $\cin$-K\"ahler
differentials $\Omega^1_{\scC/I}$  of a quotient $\cin$-ring $\scC/I$  is a  quotient of
$\Omega^1_\scC$ by an appropriate submodule.  It is useful for computations.
\begin{lemma}\label{lem:5.5}  \label{lem:8.2}
Let $f:\scC \to \scD$ be a surjective map of
$\cin$-rings, $I= \ker f$ and $\Lambda^1(f):\Omega^1_\scC \to
\Omega^1_\scD$ the induced map on K\"ahler differentials.   Then
$\Lambda^1(f)$ is surjective and $\ker \Lambda^1(f) = J$, where
\[
J = \scC d_\scC I + I d_\scC \scC = \scC d_\scC I.
\]  
\end{lemma}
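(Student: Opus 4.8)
The plan is to follow the classical argument for the Kähler differentials of a quotient ring, adapted to the $\cin$-setting: prove surjectivity and the inclusion $J \subseteq \ker \Lambda^1(f)$ directly, and then pin down the reverse inclusion by exhibiting $\Omega^1_\scC/J$ as a model for $\Omega^1_\scD$ via its universal property.

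First I would record the algebraic simplification $\scC d_\scC I + I d_\scC \scC = \scC d_\scC I$. The inclusion $\supseteq$ is immediate. For the reverse, given $a \in I$ and $c \in \scC$, the Leibniz rule for the derivation $d_\scC$ gives $a\, d_\scC c = d_\scC(ac) - c\, d_\scC a$; since $I$ is an ideal, $ac \in I$, so both $d_\scC(ac)$ and $c\, d_\scC a$ lie in $\scC d_\scC I$. Hence $I d_\scC \scC \subseteq \scC d_\scC I$. Next, for surjectivity of $\Lambda^1(f)$: by Lemma~\ref{lem:5.12} we have $\Lambda^1(f)(d_\scC c) = d_\scD(f(c))$, so for any $d' \in \scD$, choosing a preimage $c$ with $f(c) = d'$ shows $d_\scD d'$ lies in the image. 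Because $f$ is surjective, the $\scC$-action on $\Omega^1_\scD$ (obtained by restriction along $f$) has the same submodules as the $\scD$-action; thus the image, being a $\scC$-submodule containing every generator $d_\scD d'$, is all of $\Omega^1_\scD$. Finally, for $a \in I$ we have $\Lambda^1(f)(d_\scC a) = d_\scD(f(a)) = 0$, and since $\ker \Lambda^1(f)$ is a $\scC$-submodule it contains $\scC d_\scC I = J$.

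The core of the argument is the reverse inclusion $\ker \Lambda^1(f) \subseteq J$, which I would prove by showing that the induced surjection $\bar\Lambda\colon \Omega^1_\scC/J \to \Omega^1_\scD$ is an isomorphism. The first step is to observe that $\Omega^1_\scC/J$ is naturally a module over $\scD$: since $\Omega^1_\scC$ is generated as a $\scC$-module by $\{d_\scC c\}_{c \in \scC}$ (see the proof of Theorem~\ref{thm:DK}), one has $I \cdot \Omega^1_\scC = I d_\scC \scC \subseteq J$, so $I$ annihilates $\Omega^1_\scC/J$ and the $\scC$-action descends along the isomorphism $\scC/I \cong \scD$ (Lemma~\ref{lem.1}). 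Next I would define a candidate universal derivation $\delta\colon \scD \to \Omega^1_\scC/J$ by $\delta(d') := d_\scC c + J$ for any $c$ with $f(c) = d'$; this is well defined because $d_\scC$ is $\R$-linear and $d_\scC I \subseteq J$, so a different choice of preimage changes $d_\scC c$ by an element of $d_\scC I \subseteq J$.

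The main obstacle is verifying that $\delta$ is a genuine $\cin$-derivation, which is where the $\cin$-ring structure (rather than the underlying $\R$-algebra) must be used. Here I would fix $g \in \cin(\R^n)$ and $d'_1,\dots,d'_n \in \scD$, choose preimages $c_i$, and use that $f$ intertwines the operations, $g_\scD(f(c_1),\dots,f(c_n)) = f(g_\scC(c_1,\dots,c_n))$, together with the derivation identity for $d_\scC$ and the fact that the $\scD$-action on $\Omega^1_\scC/J$ is induced by $f$, to check the defining identity of Definition~\ref{def:der2}. By the universal property of $\Omega^1_\scD$ (Theorem~\ref{thm:DK}), $\delta$ then factors uniquely as $\delta = \psi \circ d_\scD$ through a $\scD$-linear map $\psi\colon \Omega^1_\scD \to \Omega^1_\scC/J$. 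It remains to check that $\psi$ and $\bar\Lambda$ are mutually inverse, which follows by evaluating on generators: $\bar\Lambda(\psi(d_\scD d')) = d_\scD d'$ and $\psi(\bar\Lambda(d_\scC c + J)) = d_\scC c + J$, both maps being $\scD$-linear while the listed elements generate the respective modules. Consequently $\bar\Lambda$ is an isomorphism, so $\ker \Lambda^1(f)/J = 0$, i.e.\ $\ker \Lambda^1(f) = J$, completing the proof.
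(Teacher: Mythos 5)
Your proof is correct and follows essentially the same route as the paper's: both reduce $J$ to $\scC d_\scC I$ via the Leibniz rule, check surjectivity and $J \subseteq \ker \Lambda^1(f)$ on the generators $d_\scC c$, endow $\Omega^1_\scC/J$ with a $\scD$-module structure, and apply the universal property of $d_\scD$ to the induced derivation $\scD \to \Omega^1_\scC/J$ to produce an inverse to the induced map $\Omega^1_\scC/J \to \Omega^1_\scD$. The only (immaterial) difference is the endgame bookkeeping: the paper first shows $\psi$ is onto and then that $\bar{f}\circ\psi = \id$ to conclude $\psi$ is an isomorphism, whereas you verify directly that both composites are the identity on generators.
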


\begin{proof}
Since $\Omega^1_\scC$, $\Omega^1_\scD$ are generated by the sets
$\{d_\scC c\}_{c\in \scC}$ and $\{d_\scD x\}_{x\in \scD}$,
respectively,  since $f$ is surjective and since $\Lambda^1(f)(d_\scC
c) = d_\scD(f(c))$ for all $c\in \scC$, $\Lambda^1(f)$ is surjective.

We next drop the subscript $_\scC$ on $d_\scC$ and argue that $\scC dI + I d\scC = \scC dI$. For any $c\in \scC$
and any $i\in I$, $d(ic)\in dI$.  Hence $dI \ni d(ic) = c di + idc$.
Consequently $i\,dc = d(ci) - cdI \in \scC dI$.  It follows that
$I d\scC \subseteq \scC dI$.  We conclude that $J = \scC dI$.

Since \eqref{eq:5:11} commutes and $\Lambda^1(f)$ is a map of
$\scC$-modules, $J= \scC dI \subseteq \ker \Lambda^1(f)$.  We need to
show that $J = \ker \Lambda^1(f)$.

Note that for any $i\in I$ and any $a\in J$, the product $ia$ is in
$J$.  Therefore the quotient $\scC$-module $\Omega^1_\scC/J$ is
naturally a $\scD$-module with the action of $\scD$ given by
\[
f(c) \cdot (\alpha +J) = (c \alpha) +J
\]  
for all $c\in \scC$ and all $\alpha + J \in \Omega^1_\scC/J$.
The map 
\[
\bar{f}:\Omega^1_\scC/J\to \Omega^1_\scD,\qquad \bar{f} (\alpha +J):=
\Lambda^1 (\alpha) 
\]  
is $\scD$-linear.  We will argue that $\bar{f}$ is an isomorphism of
modules, which is enough to prove that $\ker \Lambda^1(f) = J$ and
thereby finish the proof of the lemma.

Let $p: \Omega^1_\scC \to \Omega^1_\scC /J$ denote the canonical projection.  The map $p$ 
is a map of $\scC$-modules.  Then $p\circ d_\scC: \scC \to
\Omega^1_\scC /J$ is a $\cin$-derivation of the $\cin$-ring $\scC$.  Since for any $i\in I$,
$d_\scC(i) \in J$, the composite $p\circ d_\scC$ induces a map
\[
\bar{d}:\scD\to \Omega^1_\scC/J
\]
making the diagram
\[
\xy
(-15,10)*+{\scC}="1";
(15,10)*+{\Omega^1_\scC}="2";
(-15,-5)*+{\scD}="3";
(15,-5)*+{\Omega^1_\scC/J}="4";
{\ar@{->}^{d_\scC} "1";"2"};
{\ar@{->}_{f} "1";"3"};
{\ar@{->}^{p} "2";"4"};
{\ar@{->}_{\bar{d}} "3";"4"};
\endxy
\]
commute, i.e.,
\[
\bar{d} (f(c))  = p(d_\scC(c)) = d_\scC(c)+J
\]
for all $c\in \scC$.  A computation shows that $\bar{d}$ is a
$\cin$-derivation of the $\cin$-ring $\scD$.  By the universal
property of $d_\scD:\scD \to \Omega^1_\scD$ there exists a unique
$\scD$-linear map $\psi: \Omega^1_\scD\to \Omega^1_\scC/J$ such that
\begin{equation} \label{eq:3.3aug}
\bar{d}(x) = \psi (d_\scD(x))
\end{equation} 
for all $x\in \scD$.  Since the set $\{d_\scC(c)\}_{c\in \scC}$
generates the module $\Omega^1_\scC$ and since $p$ is onto, the set
\[
\{p(d_\scC(c))\}_{c\in \scC} = \{ \bar{d}(c)\}_{c\in \scC} =
\{\bar{d}(x)\}_{x\in \scD}
\]  
generates the module $\Omega^1_\scC/J$ (both over $\scC$ and over $\scD$).
Equation \eqref{eq:3.3aug} now implies that $\psi$ is onto.

Since $J\subset \ker \Lambda^1(f)$, the map $\Lambda^1(f)$ induces a
map $\bar{f}:\Omega^1_\scC/J \to \Omega^1_\scD$ so that
\[
\bar{f}\circ p = \Lambda^1(f).
\]  
For any $c\in \scC$
\begin{equation} \label{eq:3.4aug}
\bar{f} (\bar{d}(f(c)))= \bar{f} (p(d_\scC(c))) = \Lambda^1(f)
(d_\scC(c)  = d_\scC(f(c)).
\end{equation}
Since $f$ is onto \eqref{eq:3.4aug} implies that
\begin{equation} \label{eq:3.5aug}
\bar{f}\circ \bar{d} = d_\scD.
\end{equation}
Since $\bar{d} = \psi \circ d_\scD$ equation \eqref{eq:3.5aug} implies that 
\[
(\bar{f}\circ \psi)(d_\scD(x)) = d_\scD(x)
\]
for all $x\in \scD$, 
 Since the set
 $\{d_\scD(x)\}_{x\in \scD}$ generates $\Omega^1_\scD$ we conclude that
\[
\bar{f}\circ \psi = \id_{\Omega^1_\scD}.
\]
Hence $\psi$ is injective.   Since we have already shown that $\psi$
is onto, this implies that $\psi$ is an isomorphism.  But then
$\bar{f}$ has to be an isomorphism as well, and we are done.
\end{proof}  
\begin{example} \label{ex:Cantor}
Let $C\subset \R$ denote the standard Cantor set (middle third).
Since $C $ is closed in $\R$, the set $\cin(C):= \cin(\R)|_C$ of
restrictions of smooth functions to $C$ is the induced subspace
differential structure on $C$ (see
Definition~\ref{def:induced_diff_str} and Lemma~\ref{lem:2.27}).

\begin{claim*}
The module $\Omega^1_{\cin(C)}$ of $\cin$-K\"ahler differentials of
the $\cin$-ring $\cin(C)$ is a free $\cin(C)$-module
generated by $d(x|_C)$.  Here $x:\R\to \R$ is the identity map.
\end{claim*}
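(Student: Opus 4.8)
The plan is to realize $\cin(C)$ as a quotient of $\cin(\R)$ and apply Lemma~\ref{lem:5.5}. Since $C$ is closed in $\R$, Lemma~\ref{lem:2.27} gives $\cin(C) = \cin(\R)|_C = \cin(\R)/I$, where $I = \{h\in\cin(\R) : h|_C = 0\}$ is the ideal of smooth functions vanishing on $C$; write $f:\cin(\R)\to\cin(C)$ for the restriction map, so that $\ker f = I$. By Example~\ref{ex:Krn} the module $\Omega^1_{\cin(\R)}$ is the free $\cin(\R)$-module on $dx$, with $dh = h'\,dx$ for every $h\in\cin(\R)$. By Lemma~\ref{lem:5.5} the induced map $\Lambda^1(f):\Omega^1_{\cin(\R)}\to\Omega^1_{\cin(C)}$ is surjective with kernel $J = \cin(\R)\,d_{\cin(\R)}I$, and $\Lambda^1(f)(dx) = d(x|_C)$. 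In particular $\Omega^1_{\cin(C)}$ is generated by the single element $d(x|_C)$, so it remains only to show that this generator has trivial annihilator in $\cin(C)$.

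The key step is to identify $J$ explicitly. Since $\Omega^1_{\cin(\R)}$ is free on $dx$ and $dh = h'\,dx$, every element of $J = \cin(\R)\,dI$ has the form $g\,dx$ with $g$ in the ideal $\mathcal{J}\subseteq\cin(\R)$ generated by $\{h' : h\in I\}$; that is, $J = \mathcal{J}\,dx$. I claim $\mathcal{J}\subseteq I$, and here is where the geometry of $C$ enters: the Cantor set is perfect, so every $c\in C$ is a limit of points $c_n\in C\smallsetminus\{c\}$. For $h\in I$ we then have $h(c_n)=h(c)=0$, so $h'(c) = \lim_n \frac{h(c_n)-h(c)}{c_n-c} = 0$. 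Thus $h'|_C = 0$, i.e.\ $h'\in I$; since $I$ is an ideal containing every such $h'$, we get $\mathcal{J}\subseteq I$. (The reverse inclusion also holds --- for $i\in I$ one has $i\,dx = d(xi) - x\,di \in \cin(\R)\,dI = J$, using $xi\in I$, whence $i\in\mathcal{J}$ --- so in fact $\mathcal{J}=I$ and $J = I\,dx$, though only $\mathcal{J}\subseteq I$ is needed below.)

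To conclude, suppose $a\in\cin(C)$ satisfies $a\cdot d(x|_C)=0$. Choose $g\in\cin(\R)$ with $a = g|_C = f(g)$. Then $\Lambda^1(f)(g\,dx) = a\cdot d(x|_C) = 0$, so $g\,dx\in J = \mathcal{J}\,dx$; since $dx$ is a free generator of $\Omega^1_{\cin(\R)}$ this forces $g\in\mathcal{J}\subseteq I$, hence $a = g|_C = 0$. Therefore $d(x|_C)$ generates $\Omega^1_{\cin(C)}$ with trivial annihilator, i.e.\ $\Omega^1_{\cin(C)}$ is free of rank one on $d(x|_C)$. The main obstacle is the explicit computation of $J$ in the middle step, and specifically the use of perfectness of $C$ to show that derivatives of functions vanishing on $C$ again vanish on $C$; everything else is a formal application of Lemma~\ref{lem:5.5} together with the freeness of $\Omega^1_{\cin(\R)}$.
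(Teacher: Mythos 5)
Your proof is correct and follows essentially the same route as the paper's: realize $\cin(C)$ as $\cin(\R)/I$, apply Lemma~\ref{lem:5.5} to identify $\ker\Lambda^1(f)$, use the fact that $C$ has no isolated points to show $h\in I \Rightarrow h'\in I$, and conclude via freeness of $\Omega^1_{\cin(\R)}$ on $dx$. The only cosmetic differences are that you work with the ideal $\mathcal{J}$ generated by derivatives and need only the inclusion $J\subseteq I\,dx$ (the paper computes $J=I\,dx$ exactly), and you spell out the difference-quotient limit argument that the paper merely asserts.
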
  

\begin{proof}
By Example~\ref{ex:Krn}, the universal derivation $d_{\cin(R)} :\cin(\R) \to
\Omega^1_{\cin(\R)}$ is the ordinary exterior derivative $d:\cin(\R)
\to \bOmega^1_{dR} (\R)$.   The restriction map $|_C: \cin(\R) \to
\cin(\R)|_C \equiv \cin(C)$ is surjective.   It induces a surjective
map $\bOmega^1_{dR} (\R) \to \Omega^1_{\cin(C)}$, which we again
denote by $|_C$ (this abuse of notation is  convenient).  By
Lemma~\ref{lem:5.5}
\[
J:= \ker \left(|_C :\bOmega^1_{dR} (\R) \to \Omega^1_{\cin(C)}\right) = \cin(\R) dI +
I d \cin(\R),
\]  
where $I$ is the ideal of functions on $\R$ that vanish on the Cantor
set:
\[
I = \{f\in \cin(\R) \mid f|_C =0\}.
\]
Since $dx$ generated $\bOmega^1_{dR}(\R)$ its restriction $dx|_C$ generates
$\Omega^1_{\cin(C)} = \bOmega^1_{dR}(\R)/J$.  We now argue that
$\Omega^1_{\cin(C)}$ is freely generated by $dx|_C$.  That is,
\[
f|_C \, dx|_C = 0 \quad \Rightarrow f|_C = 0.
\]  
Since the Cantor set has no isolated points, for any functions $f$
vanishing on $C$
the derivative $f'$ also vanishes on $C$. Hence for any $f\in I$
\[
d f = f' dx \in I d\cin(\R).
\]
Consequently $\cin(\R) dI \subseteq  I d\cin (\R)$ and therefore  $J = I
d\cin(\R)$.  Since $
d \cin (\R) = \cin(\R) dx$
\[
J  = I d \cin(\R)  = I \cin(\R) dx = I dx.
\]
Now suppose $f|_C \, d(x|_C) = 0$ for some $f\in \cin(\R)$.  Then
$0 = (fdx)|_C $ and therefore $fdx \in J = I dx$.  Since
$\bOmega^1_{dR} (\R)$ is a free $\cin(\R)$-module generated by $dx$,
$f\in I$.  Therefore $f|_C =0$ in $\cin(C)$.
\end{proof}
\end{example}
  
Our next example, Example~\ref{ex:xy=0}, will produce  a $\cin$-ring $\scC$ so that the map
\[
\psi: \Omega^1_\scC \to \Hom (\cDer(\scC), \scC)
\]
of Remark~\ref{rmrk:3.3} is not injective.    In order to prove that this
is indeed the case, we need two lemmas.

\begin{lemma} \label{lem:3.14aug}
Let $f:\scC\to \scD$ be a map of $\cin$-rings,  $v\in \cin\Der(\scC)$,
$w\in \cin\Der(\scD)$ two derivations with $f\circ v = w\circ f$.
Let $\imath_v: \Omega^1_\scC \to \scC$, $\imath_w: \Omega^1_\scD\to
\scD$ be the corresponding contractions (Remark~\ref{rmrk:3.3}).
Then the diagram
\begin{equation}\label{eq:3.3.1june}
\xy
(-15,10)*+{\Omega^1_\scC}="1";
(15,10)*+{\scC}="2";
(-15,-5)*+{\Omega^1_\scB }="3";
(15,-5)*+{\scD}="4";
{\ar@{->}^{\imath_v} "1";"2"};
{\ar@{->}_{\Lambda^1 f} "1";"3"};
{\ar@{->}^{f} "2";"4"};
{\ar@{->}_{\imath_w} "3";"4"};
\endxy
\end{equation}
commutes. 
\end{lemma}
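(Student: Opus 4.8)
The plan is to verify the commutativity by evaluating both composites on a generating set, using the universal characterizations of all four arrows. Recall that $\Omega^1_\scC$ is generated as a $\scC$-module by the elements $\{d_\scC c\}_{c\in\scC}$ (Theorem~\ref{thm:DK}), that $\imath_v$ is the $\scC$-linear map determined by $\imath_v(d_\scC c)=v(c)$ (Remark~\ref{rmrk:3.3}), that $\Lambda^1 f$ is the $\scC$-linear map determined by $\Lambda^1 f(d_\scC c)=d_\scD(f(c))$ (Lemma~\ref{lem:5.12}), and that $\imath_w$ is the $\scD$-linear map determined by $\imath_w(d_\scD x)=w(x)$.

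First I would observe that both composites $f\circ\imath_v$ and $\imath_w\circ\Lambda^1 f$ are additive, being composites of additive maps. Since every element of $\Omega^1_\scC$ is a finite sum $\sum_i a_i\, d_\scC c_i$ with $a_i,c_i\in\scC$, it therefore suffices to prove that the two composites agree on a single term $a\, d_\scC c$.

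Next I would compute both routes on such a term. Along the top-then-right edge, using $\scC$-linearity of $\imath_v$ and multiplicativity of $f$,
\[
f\bigl(\imath_v(a\, d_\scC c)\bigr)=f\bigl(a\cdot v(c)\bigr)=f(a)\,f(v(c)).
\]
Along the left-then-bottom edge, using that $\Lambda^1 f$ is $\scC$-linear (with $\Omega^1_\scD$ regarded as a $\scC$-module through $f$) and that $\imath_w$ is $\scD$-linear,
\[
\imath_w\bigl(\Lambda^1 f(a\, d_\scC c)\bigr)=\imath_w\bigl(f(a)\, d_\scD(f(c))\bigr)=f(a)\,\imath_w\bigl(d_\scD(f(c))\bigr)=f(a)\,w(f(c)).
\]

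Finally, comparing the two expressions reduces the commutativity to the identity $f(v(c))=w(f(c))$ for all $c\in\scC$, which is precisely the hypothesis $f\circ v=w\circ f$. I do not expect a genuine obstacle here; the only point requiring care is the bookkeeping of the module structures, namely remembering that $\Lambda^1 f$ transports the scalar $a$ to $f(a)$ before the $\scD$-linear map $\imath_w$ is applied. Once the generators and the correct linearities are in place, the intertwining hypothesis supplies the rest.
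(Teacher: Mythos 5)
Your proof is correct, but it runs along a different track than the paper's. You verify commutativity concretely: both composites are additive, $\Omega^1_\scC$ is generated as a $\scC$-module by $\{d_\scC c\}_{c\in\scC}$ (from the construction in Theorem~\ref{thm:DK}), so it suffices to evaluate on a term $a\,d_\scC c$, where the two routes give $f(a)\,f(v(c))$ and $f(a)\,w(f(c))$, equal by hypothesis. The paper instead never touches elements of $\Omega^1_\scC$: it observes that $f\circ v=w\circ f$ is itself a $\cin$-derivation of $\scC$ with values in $\scD$ (viewed as a $\scC$-module through $f$), so by the universal property of $d_\scC$ there is a \emph{unique} $\scC$-linear map $\psi:\Omega^1_\scC\to\scD$ with $\psi\circ d_\scC=f\circ v$; since $f\circ\imath_v$ and $\imath_w\circ\Lambda^1 f$ are both $\scC$-linear and both satisfy this equation when precomposed with $d_\scC$, uniqueness forces them to coincide. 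The two arguments are close cousins --- the uniqueness clause the paper invokes is itself proved via the generation fact you use --- but they buy slightly different things: your computation is self-contained and makes all the module-structure bookkeeping explicit (the transport of the scalar $a$ to $f(a)$, which is indeed the one delicate point), while the paper's argument is shorter, avoids element manipulation entirely, and illustrates the uniqueness-style reasoning that recurs throughout the paper (e.g.\ in Lemma~\ref{lem:5.5}). One small caution for your write-up: when you say $\Lambda^1 f$ is ``$\scC$-linear,'' you should say, as you do parenthetically, that this means $\Lambda^1 f(a\,\omega)=f(a)\,\Lambda^1 f(\omega)$ with $\Omega^1_\scD$ a $\scC$-module via $f$; this is exactly the sense in which Lemma~\ref{lem:5.12} asserts it, so your step is justified.
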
  

\begin{proof}
Since $f$ is a $\cin$-ring homomorphism, $w\circ f = f\circ
v:\scC\to \scD$ is a $\cin$-ring derivation of $\scC$ with values in
$\scD$ (thought of as a $\scC$-module by way of $f$).  By the universal
property of $d_\scC:\scC \to \Omega^1_\scC$ there exists a unique
$\scC$-linear map $\psi:\Omega^1_\scC\to \scD$  so that
\begin{equation} \label{eq:3.3.2june}
f\circ v = \psi \circ d_\scC = w \circ f.
\end{equation}

By definition of the contraction $\imath_v$ we have
\[
f\circ v =
f\circ \imath_v\circ d_\scC.
\]
By definitions of $\imath_w$ and of the map $\Lambda^1f$
\[
w\circ f  =(\imath_w \circ d_\scD) \circ f = \imath_w \circ
\Lambda^1 f \circ d_\scC.
\]  
Therefore by the uniqueness of the map $\psi$ satisfying
\eqref{eq:3.3.2june},
\[
f \circ \imath_v = \imath_w \circ f,
 \] 
i.e., \eqref{eq:3.3.1june} commutes.
\end{proof}

\begin{lemma} \label{lem:3.15aug}
Let $M\subset \R^n$ be a closed subset and $\scF= \cin(\R^n)|_M$ the
induced subspace differential structure.   For any derivation $v\in
\cDer(\scF)$ there exists a (non-unique) derivation $V\in \cin(\R^n)$,
i.e. a vector field on $\R^n$, so that
\begin{equation}  \label{eq:3.8}
V(f)|_M = v(f|_M)
\end{equation}
for all $f\in \cin(\R^n)$.  Here, as in Example~\ref{ex:Cantor},
$|_M:\cin(\R^n) \to \scF = \cin(\R^n)|_M$ is the restriction map.
\end{lemma}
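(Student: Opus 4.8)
The plan is to build the vector field $V$ by prescribing its action on the coordinate functions and then to verify the identity \eqref{eq:3.8} by reducing both of its sides to the same coordinate expression.

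First I would produce a candidate $V$. Since $M$ is closed in $\R^n$, Lemma~\ref{lem:2.27} tells us that $\scF = \cin(\R^n)|_M$ is exactly the set of restrictions of smooth functions, so the restriction map $|_M : \cin(\R^n)\to \scF$ is surjective. Applying $v$ to the restricted coordinate functions $x_i|_M \in \scF$ yields elements $v(x_i|_M)\in \scF$, so by surjectivity I may choose functions $g_i \in \cin(\R^n)$ with $g_i|_M = v(x_i|_M)$. This choice is non-unique (the $g_i$ are determined only modulo the ideal of functions vanishing on $M$), which accounts for the claimed non-uniqueness of $V$. I then set $V := \sum_i g_i\,\partial_i$, an honest vector field on $\R^n$.

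Next I would verify \eqref{eq:3.8}. On the one hand $V(f) = \sum_i g_i\,\partial_i f$, so $V(f)|_M = \sum_i (\partial_i f)|_M\cdot v(x_i|_M)$. On the other hand, the restriction map is a morphism of $\cin$-rings and $\cin(\R^n)$ is free on the coordinate functions (Lemma~\ref{lem:rn-is-free}), so $f = f_{\cin(\R^n)}(x_1,\dots,x_n)$ gives $f|_M = f_\scF(x_1|_M,\dots,x_n|_M)$, and likewise $(\partial_i f)|_M = (\partial_i f)_\scF(x_1|_M,\dots,x_n|_M)$. Applying the $\cin$-derivation identity \eqref{eq:der} for $v$ to $f|_M = f_\scF(x_1|_M,\dots,x_n|_M)$ then gives
\[
v(f|_M) = \sum_i (\partial_i f)_\scF(x_1|_M,\dots,x_n|_M)\cdot v(x_i|_M) = \sum_i (\partial_i f)|_M\cdot v(x_i|_M),
\]
which is precisely $V(f)|_M$. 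This proves the lemma.

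The construction presents no genuine obstacle; the one point to keep straight is that the two sides agree not by accident but because both are forced, through the freeness of $\cin(\R^n)$ and the $\cin$-Leibniz rule, into the single expansion $\sum_i (\partial_i f)|_M\cdot v(x_i|_M)$. The hypothesis that $M$ is closed enters only through the surjectivity of $|_M$, which is exactly what allows us to lift $v(x_i|_M)$ to smooth functions $g_i$ defined on all of $\R^n$; for a non-closed $M$ one would instead be forced to work with germs along $M$.
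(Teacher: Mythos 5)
Your proposal is correct and follows essentially the same route as the paper: lift the elements $v(x_i|_M)\in\scF$ to smooth functions on $\R^n$ via surjectivity of the restriction map, set $V=\sum_i g_i\,\partial_i$, and verify \eqref{eq:3.8} by expanding both sides through the $\cin$-Leibniz rule applied to $f|_M = f_\scF(x_1|_M,\dots,x_n|_M)$. The only cosmetic difference is that you cite Lemma~\ref{lem:2.27} explicitly for surjectivity, whereas the paper takes it as built into the hypothesis $\scF=\cin(\R^n)|_M$.
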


\begin{proof}
Since the set $\{x_1, \ldots, x_n\}$ of coordinate functions on $\R^n$
generate $\cin(\R^n)$ as a $\cin$-ring and since the restriction map
$|_M:\cin(\R^n)\to \scF$ is surjective, the set $\{x_1|_M, \ldots,
x_n|_M\}$ generates $\scF$.   More explicitly,  for any $h\in
\cin(\R^n)$
\[
  h_\scF(x_1|_M, \ldots,x_n|_M) = h\circ (x_1|_M, \ldots,x_n|_M)
  =\left( h \circ (x_1,\ldots, x_n)\right)|_M = h|_M.
\]
Since $v$ is a $\cin$-derivation, for any $f\in \cin(\R^n)$,
\[
v (f|_M) = v(f_\scF( x_1|_M, \ldots,x_n|_M)) = \sum_i (\partial_i
f)_\scF (x_1|_M, \ldots,x_n|_M)\cdot  v(x_i|_M).
\]
Since each $v(x_i|_M) \in \scF = \cin(\R^n)|_M$ there exist smooth
functions $a_1,\ldots, a_n\in \R^n$ with $a_i|_M =  v(x_i|_M)$ for all
$i$ (these functions are only unique up to the ideal $I_M$ of
functions vanishing on $M$).  Now define a vector field $V$ on
$\R^n$ to be
\[
V := \sum_i a_i \frac{\partial}{\partial x_i}.
\]  
For any function $f\in \cin(\R^n)$
\[
  \begin{split}
V(f)|_M &= \left.\left( \sum a_i \partial_if) \right) \right|_M =  \sum
(\partial_if)|_M \cdot a_i |_M \\
&= (\partial_if) _\scF( x_1|_M, \ldots,x_n|_M) \cdot v (x_i|_M)\\
&= v (f_\scF(x_1|_M, \ldots,x_n|_M) )\qquad \textrm{ (since $v$ is a
  $\cin$-derivation)}\\
&= v(f|_M).
  \end{split}
\]  
\end{proof}  

\begin{remark}
Given $M\subset \R^n$ as in Lemma~\ref{lem:3.15aug}, let $I_M $ denote
the ideal of functions on $\R^n$ that vanish on $M$. Then
$\cin(\R^n)|_M = \cin(\R^n)/I_M$ and
\eqref{eq:3.8} can be restated as follows:
\[
  V(f) +I_M = v(f+I_M)
\]
for all $f\in \cin(\R^n)$.
\end{remark}  

\begin{example} \label{ex:5.15} \label{ex:xy=0} Consider the closed
  subset $M$ of $ \R^2$ consisting of the union of two coordinate
  axes:
\[
M = \{(x,y)\in \R^2 \mid xy =0\}.
\]
Then $\cin(M) := \cin(\R^2)|_M$ is the induced differential space
structure on $M$.  As in Example~\ref{ex:Cantor} and in
Lemma~\ref{lem:3.15aug}, let $|_M: \cin(\R^2)\to \cin(M)$ denote the
restriction map.  The kernel of $|_M$ is the ideal
\[
I = \{f\in \cin(\R^2) \mid f|_M = 0\}.
\]  
Using Hadamard's lemma \cite[p.\ 17]{jet} twice it is not hard to show
that $I$ is generated by the function $xy$.  Here are the details.
Suppose  $f(x,y) \in I$.  Then $f(0,y) =
0$ for all $y$ hence by Hadamard
\[
f(x,y) = f(x,y) - f(0,y) = x g(x,y)
\]    
for some $g\in \cin(\R^2)$.  Since $f(x,0) = 0$ for all $x$, $g(x,0) =
0$ for all $x\not = 0$ and, by continuity, $g(x,0) = 0$ for all $x$.
Therefore
\[
g(x,y) = g(x,y) - g(x,0) = y h(x,y)
\]  
for some $h\in \cin(\R^2)$.  Hence $f(x,y) = xy h(x,y)$ for some $h\in
\cin (\R^2)$.
Thus $\cin(M) =
\cin(\R^2) /\langle xy \rangle $.  By Lemma~\ref{lem:5.5}
\[
\Omega^1_{\cin(M)} = \Omega^1_{\cin(\R^2)}/ J 
\]
where
\[
  J = \cin(\R^2) \, d  I +I\, d (\cin(R^2)) = \{ f d(xy) + xy g \, dx
  + xy h\, dy\mid f,g,h\in \cin(\R^2)\}.
\]
Since $\Omega^1_{\cin(\R^2)}=\bOmega^1_{dR}(\R^2)$ is freely generated by $dx, dy$, the
module $\Omega^1_{\cin(M)} = \Omega^1_{\cin(\R^2)}/J$ is generated by
$dx+J$ and $dy+J$ which we  write as $dx|_M$ and $dy|_M$,
respectively.   Note that since $ 0  = d 0 = d(xy|_M)= d(xy)|_M = x|_M dy|_M +y|_M dx|_M$
there is a relation between the generators.

On the other hand $xdy \not \in J$ so $xdy|_M \not =0$ in
$\Omega^1_{\cin(M)}$.  We now argue that for any derivation $v\in
\cDer(\cin(M))$
\[
\imath_v (xdy|_M) = 0
\]  
(we use the notation of Lemma~\ref{lem:3.14aug}). Fix a derivation
$v\in \cDer(\cin(M))$.  By Lemma~\ref{lem:3.15aug} there exists a
vector field $V=a_1\frac{\partial}{\partial x} + a_2 \frac{\partial}{\partial y}$ on $\R^2$ so that $V(f)|_M = v(f|_M)$ for all $f\in \cin(\R^n)$.
Then for $f(x,y) = xy$ we get
\[
0= v(0) =  v(xy|_M) = V(xy)|_M = (y a_1 + x a_2)|_M.
\] 
Consequently  $ya_1 + xa_2 = xy a_3$ for some $a_3\in \cin(\R^2)$.
Then $0 = y a_1(0,y)$ for all $y$.  Therefore $a_1(0, y) = 0$ for all
$y\not =0$ and thus $a_1(0,y) = 0$ for all $y$ by continuity.
Hadamard's lemma now implies that $a_1(x,y) = x b_1(x,y)$ for some
$b_1\in \cin(\R^2)$.  Similarly $a_2(x,y) = y b_2(x,y)$ for some
$b_2\in \cin(\R^2)$.  In other words $V$ is tangent to the coordinate
axis and vanishes at the origin.  By Lemma~\ref{lem:3.14aug}
\[
\imath_V (xdy))|_M = \imath_v ((xdy)|_M) 
\]  
Since $\imath_V (xdy) = xyb_2$, the restriction $\imath_V (xdy))|_M $
is zero.  This proves that $ \imath_v ((xdy)|_M) = 0$ for all
derivations $v$.
\end{example}

\section{$\cin$-algebraic  de Rham complex} \label{sec:drh}

In this section by analogy with the algebraic de Rham complex
\cite{Groth, Groth67} we construct the $\cin$-algebraic de Rham
complex $(\Lambda ^\bullet \Omega^1_\scC, \wedge, d)$ of a $\cin$-ring $\scC$.

\begin{notation} Let $\Omega^1_\scC$ be the module of
  $\cin$-K\"ahler differentials of a $\cin$-ring $\scC$.  For $k\in \Z$ we
 denote the  $k$-th exterior power of the $\scC$-module
 $\Omega^1_\scC$ by $\Lambda^k (\Omega^1_\scC)$.  By convention 
\[
\Lambda^0 (\Omega^1_\scC):= \scC \qquad \textrm{ and } \Lambda^k
(\Omega^1_\scC) = 0\textrm{ for } k<0.
 \]
 We set
 \[
\Lambda^\bullet\Omega^1_\scC := \{ \Lambda^k\Omega^1_\scC\}_{k\in \Z}
\]
which is a commutative graded algebra over the $\cin$-ring $\scC$.
\end{notation}

We now turn  the commutative graded algebra $\Lambda ^\bullet \Omega^1_\scC$ into a commutative 
  {\em differential}  graded algebra (a CDGA).
\begin{theorem}\label{thm:6.2}
Let $\scC$ be a $\cin$-ring.  The universal differential $d_\scC:\scC
\to \Omega^1_\scC$ extends uniquely to a degree 1 derivation $d\in \Der(\Lambda ^\bullet \Omega^1_\scC)$
with the property that $d\circ d=0$.  

In particular, for any $k>0$ and any $a, b_1,\ldots, b_k\in \scC$
\begin{equation} \label{eq:6.d}
d(a \, d_\scC b_1\wedge  \ldots \wedge d_\scC b_k) = d_\scC a \wedge d_\scC b_1\wedge  \ldots \wedge d_\scC b_k.
\end{equation}

\end{theorem}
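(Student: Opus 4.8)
The plan is to separate the statement into a uniqueness half, which pins down formula~\eqref{eq:6.d}, and an existence half, whose only real content is well-definedness. \textbf{Uniqueness and the formula:} suppose $d$ is any degree $1$ derivation of $\Lambda^\bullet\Omega^1_\scC$ extending $d_\scC$ with $d\circ d=0$. Applying $d^2=0$ to $b\in\scC=\Lambda^0\Omega^1_\scC$ gives $d(d_\scC b)=d^2 b=0$ for every $b$. Expanding a homogeneous generator $a\,d_\scC b_1\wedge\ldots\wedge d_\scC b_k$ with the graded Leibniz rule, every term in which $d$ lands on a factor $d_\scC b_j$ vanishes, leaving only $d_\scC a\wedge d_\scC b_1\wedge\ldots\wedge d_\scC b_k$, which is \eqref{eq:6.d}. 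Since $\Omega^1_\scC$ is generated by $\{d_\scC b\}_{b\in\scC}$, such elements span each $\Lambda^k\Omega^1_\scC$ over $\R$, so \eqref{eq:6.d} determines $d$ uniquely.

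For existence I would first construct $d^1\colon\Omega^1_\scC\to\Lambda^2\Omega^1_\scC$, using the presentation $\Omega^1_\scC=F[\hd\scC]/I$ from the proof of Theorem~\ref{thm:DK}. Since $F[\hd\scC]=\bigoplus_{b\in\scC}\scC\cdot\hd b$ is free and $a\mapsto d_\scC a\wedge d_\scC b$ is $\R$-linear, there is a well-defined $\R$-linear map $\tilde d^1\colon F[\hd\scC]\to\Lambda^2\Omega^1_\scC$ with $\tilde d^1\big(\sum_j a_j\,\hd b_j\big)=\sum_j d_\scC a_j\wedge d_\scC b_j$. The crux is that $\tilde d^1$ kills $I=\langle\scR\rangle$. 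As $\tilde d^1$ is $\R$-linear and $I$ is $\R$-spanned by $\{c\cdot r:c\in\scC,\ r\in\scR\}$, it suffices to check $\tilde d^1(c\cdot r)=0$ for a generating relation $r=\hd(f_\scC(a_1,\ldots,a_n))-\sum_i(\partial_i f)_\scC(a_1,\ldots,a_n)\,\hd a_i$. Expanding $\tilde d^1(c\cdot r)$ and using that $d_\scC$ is a $\cin$-derivation to rewrite $d_\scC\big((\partial_i f)_\scC(a_1,\ldots,a_n)\big)=\sum_j(\partial_j\partial_i f)_\scC(a_1,\ldots,a_n)\,d_\scC a_j$, the expression collapses to $-c\sum_{i,j}(\partial_j\partial_i f)_\scC(a_1,\ldots,a_n)\,d_\scC a_j\wedge d_\scC a_i$, which vanishes because the coefficients are symmetric in $(i,j)$ by equality of mixed partials while $d_\scC a_j\wedge d_\scC a_i$ is antisymmetric. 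Thus $\tilde d^1$ descends to $d^1$, and $d^1$ satisfies $d^1(a\,d_\scC b)=d_\scC a\wedge d_\scC b$, the identity $d^1(c\omega)=d_\scC c\wedge\omega+c\,d^1\omega$, and $d^1(d_\scC b)=0$.

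It then remains to extend $d_\scC$ and $d^1$ to a degree $1$ derivation of all of $\Lambda^\bullet\Omega^1_\scC$. This is the standard fact that an $\R$-derivation $d_\scC\colon\scC\to\Omega^1_\scC$ together with an $\R$-linear $d^1\colon\Omega^1_\scC\to\Lambda^2\Omega^1_\scC$ obeying the Leibniz identity above extends uniquely to a degree $1$ derivation of the exterior algebra; concretely one sets $d^k(\omega_1\wedge\ldots\wedge\omega_k)=\sum_j(-1)^{j-1}\omega_1\wedge\ldots\wedge d^1\omega_j\wedge\ldots\wedge\omega_k$ for $\omega_i\in\Omega^1_\scC$ and checks that the right-hand side is alternating and compatible with the $\scC$-bilinearity of $\wedge$, so it factors through $\Lambda^k\Omega^1_\scC$. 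Finally $d\circ d$ is a degree $2$ derivation, so it is enough to see it vanishes on the algebra generators: on $\scC$ because $d^1\circ d_\scC=0$, and on $\Omega^1_\scC$ because $d(d_\scC a\wedge d_\scC b)=d^1(d_\scC a)\wedge d_\scC b-d_\scC a\wedge d^1(d_\scC b)=0$. A derivation killing a generating set is zero, so $d\circ d=0$.

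The main obstacle is the well-definedness in the $\Lambda^1$ step, namely that \eqref{eq:6.d} respects the defining relations $\scR$ of $\Omega^1_\scC$; this is exactly where the symmetry $\partial_i\partial_j f=\partial_j\partial_i f$ of second partials enters, paired against the antisymmetry of the wedge. The only secondary difficulty is the sign-sensitive verification that the higher $d^k$ respect the $\scC$-bilinearity and antisymmetry of $\wedge$, which is routine once the Koszul conventions are tracked carefully.
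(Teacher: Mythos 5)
Your proposal is correct and follows essentially the same route as the paper: build $d^1$ on $\Omega^1_\scC$ from the free presentation $F[\hd\scC]/I$ by pairing the symmetry of mixed partials against the antisymmetry of the wedge, extend to higher exterior powers by the alternating-sum formula (checking it factors through $\Lambda^k$), and then deduce \eqref{eq:6.d}, $d\circ d=0$ and the Leibniz rule on spanning elements. If anything, your treatment is slightly more careful in two spots: you check that the would-be differential kills $c\cdot r$ for all $c\in\scC$, $r\in\scR$ (the paper displays the computation only for $r\in\scR$, even though the map is merely $\R$-linear, not $\scC$-linear), and you make the uniqueness argument explicit by noting that $d^2=0$ on $\Lambda^0$ forces $d(d_\scC b)=0$ and hence formula \eqref{eq:6.d}.
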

Our proof  mimics  \cite[\href{https://stacks.math.columbia.edu/tag/0FKL}{Tag
  0FKL}]{stacks-project}.  As a first step we prove
\begin{lemma} \label{lem:4.3aug}
Let $\scC$ be a $\cin$ ring.  The universal differential $d_\scC:\scC
\to \Omega^1_\scC$ gives rise to a unique $\R$-linear map $d:\Omega^1_\scC
\to \Omega^1_\scC \wedge \Omega^1_\scC = \Lambda^2\Omega^1_\scC$
with
\begin{itemize}
\item $d(\sum_{i=1}^k a_i d_\scC b_i ) = \sum_i  d_\scC a_i\wedge  d_\scC b_i $
  for all $k>0$, all $a_1,\ldots, a_k, b_1,\ldots, b_k \in \scC$ and
\item   $d(d_\scC a) = 0$ for all $a\in\scC$.
\end{itemize}  
\end{lemma}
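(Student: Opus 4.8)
The plan is to build the map on the free module presenting $\Omega^1_\scC$ and then descend it. Recall from the proof of Theorem~\ref{thm:DK} that $\Omega^1_\scC = F[\hd\scC]/I$, where $F[\hd\scC]$ is the free $\scC$-module on the symbols $\{\hd b\}_{b\in\scC}$ and $I=\langle\scR\rangle$ is the submodule generated by the chain-rule relations $\scR$. Since $\{\hd b\}_{b\in\scC}$ is a $\scC$-basis of $F[\hd\scC]$, every element has a unique expression $\sum_b c_b\,\hd b$ (a finite sum), so I may define
\[
\tilde d\colon F[\hd\scC]\to\Lambda^2\Omega^1_\scC,\qquad \tilde d\Big(\sum_b c_b\,\hd b\Big):=\sum_b d_\scC c_b\wedge d_\scC b.
\]
Because $d_\scC$ is additive and $\R$-linear, $\tilde d$ is $\R$-linear (but emphatically \emph{not} $\scC$-linear); collecting terms also shows $\tilde d(\sum_i a_i\hd b_i)=\sum_i d_\scC a_i\wedge d_\scC b_i$ for an arbitrary, not necessarily reduced, expression.

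The crux is to show that $\tilde d$ annihilates $I$, so that it factors through the quotient $\Omega^1_\scC$. Since $\tilde d$ is only $\R$-linear while $I$ is the $\scC$-module generated by $\scR$, it is \emph{not} enough to check $\tilde d$ on the generators $r\in\scR$: I must verify $\tilde d(c\cdot r)=0$ for every $c\in\scC$ and every $r\in\scR$, because as an $\R$-vector space $I$ is spanned by $\{c\cdot r\}$. Writing $\vec a:=(a_1,\dots,a_n)$, $r=\hd(f_\scC(\vec a))-\sum_i(\partial_i f)_\scC(\vec a)\,\hd a_i$, and abbreviating $g_i:=(\partial_i f)_\scC(\vec a)$, I apply $\tilde d$ and use the Leibniz rule $d_\scC(c g_i)=g_i\,d_\scC c+c\,d_\scC g_i$ together with the chain rule $d_\scC(f_\scC(\vec a))=\sum_i g_i\,d_\scC a_i$ (the defining identity \eqref{eq:der} of a $\cin$-derivation). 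The terms carrying $d_\scC c$ cancel in pairs, leaving
\[
\tilde d(c\cdot r)=-\,c\sum_i d_\scC g_i\wedge d_\scC a_i=-\,c\sum_{i,j}(\partial_j\partial_i f)_\scC(\vec a)\,d_\scC a_j\wedge d_\scC a_i.
\]
This last sum vanishes because the coefficients $(\partial_j\partial_i f)_\scC(\vec a)$ are symmetric in $i,j$ (equality of mixed partials for the smooth function $f$) while the wedges $d_\scC a_j\wedge d_\scC a_i$ are antisymmetric; over a field of characteristic zero such a symmetric-times-antisymmetric sum equals its own negative, hence is zero.

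With $\tilde d(I)=0$ established, $\tilde d$ descends to an $\R$-linear map $d\colon\Omega^1_\scC\to\Lambda^2\Omega^1_\scC$. The two asserted identities are then immediate: the first is the descended formula applied to $\sum_i a_i\,d_\scC b_i$, and $d(d_\scC a)=\tilde d(1\cdot\hd a)=d_\scC 1\wedge d_\scC a=0$ since $d_\scC 1=0$. Uniqueness follows since every element of $\Omega^1_\scC$ has the form $\sum_i a_i\,d_\scC b_i$, so the first identity determines $d$ completely. I expect the genuine obstacle to be precisely this well-definedness step---in particular recognizing that $\R$-linearity forces the verification on every $c\cdot r$ rather than on $r$ alone---after which the cancellation between the symmetry of the Hessian and the antisymmetry of the wedge carries the argument.
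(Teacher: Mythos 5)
Your proposal is correct and follows the same overall strategy as the paper's proof: define the map on the free module $F[\hd\scC]$ by $\sum_i a_i\,\hd b_i\mapsto\sum_i d_\scC a_i\wedge d_\scC b_i$, show it annihilates the relations defining $\Omega^1_\scC=F[\hd\scC]/I$, and descend, with the key cancellation coming from symmetry of mixed partials against antisymmetry of the wedge. The one genuine difference is at the well-definedness step, and it is in your favor: the paper verifies vanishing only on the generating set $\scR$ itself (your $c=1$ case), whereas you correctly note that, since the map is merely $\R$-linear while $I$ is the $\scC$-submodule generated by $\scR$, one must check all products $c\cdot r$ with $c\in\scC$, $r\in\scR$; your Leibniz-rule computation, in which the $d_\scC c$ terms cancel and leave $-c\sum_i d_\scC g_i\wedge d_\scC a_i$, supplies exactly this. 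The paper's shorter check can be rescued by an observation it does not state: if $r\in\scR$ is the relation attached to $f$ and $(a_1,\ldots,a_n)$, then $c\cdot r=r_h-r_p$, where $r_h,r_p\in\scR$ are the relations attached to $h(x_0,x_1,\ldots,x_n)=x_0f(x_1,\ldots,x_n)$ evaluated at $(c,a_1,\ldots,a_n)$ and to $p(x,y)=xy$ evaluated at $(c,f_\scC(a_1,\ldots,a_n))$, so the $\R$-span of $\scR$ is already a $\scC$-submodule and checking generators suffices. Either your direct computation or this observation is needed to make the descent airtight; your version makes it explicit, which is a modest but real improvement in rigor over the argument as written.
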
  
\begin{proof}
It is easy to see that if such map $d$ exists, then it has to be unique.  To
prove existence of $d$ consider the $\R$-linear map
\[
  \varphi: F(\hd \scC) \to \Omega^1_\scC \wedge \Omega^1_\scC, \qquad
  \varphi (\sum_{i=1}^k a_i \hd b_i ) = \sum_i  d_\scC a_i\wedge  d_\scC b_i ,
  \]
  where, as in the proof of Theorem~\ref{thm:DK},  $F(\hd \scC)$ is a
  free $\scC$-module on the set $\{\hd b\}_{b\in \scC}$.  Since
  $d_\scC$ is a derivation, $d_\scC(1) = 0$ and consequently 
  \[
    \varphi(\hd b) = \varphi(1\, \hd b) = d_\scC (1) \wedge d_\scC b =
    0.
  \]
  We check that $\varphi$ vanishes on the submodule
  $I =\langle \scR\rangle$ of $F(\hd \scC)$ (see the proof of
  Theorem~\ref{thm:DK} for the notation) hence descends to a map
  $d: \Omega^1_\scC = F(\hd \scC)/I\to \Omega^1_\scC \wedge
  \Omega^1_\scC$.  This is a computation that uses the fact that mixed
  partials commute.  Given $n>0$, $f\in \cin(\R^n)$ and
  $a_1,\ldots, a_n\in \scC$
\[
\begin{split}
\varphi &\left( \hd (f_\scC(a_1,\ldots, a_n) - \sum (\partial_i f)_\scC
(a_1,\ldots, a_n) \, \hd a_i \right)\\
=& 0 - \sum_i d_\scC \left((\partial_i f)_\scC(a_1,\ldots, a_n)
\right)\wedge  d_\scC  a_i \qquad \textrm{ ( since } \varphi(\hd  b) =0
\textrm{ for all } b\in \scC)\\
=&-\sum_{j,i} (\partial_{ji}^2 f)_\scC(a_1,\ldots, a_n) d_\scC
a_j\wedge  d_\scC  a_i\\
=& \sum_{i<j}\left( (\partial_{ij}^2 f)_\scC(a_1,\ldots, a_n)
   -(\partial_{ji}^2 f)_\scC(a_1,\ldots, a_n)     \right)\,
d_\scC a_i
\wedge d_\scC a_j \\
=& \sum_{i<j} (\partial_{ij}^2 f -\partial_{ji}^2 f)_\scC(a_1,\ldots, a_n)  \,
d_\scC a_i\wedge d_\scC a_j  =0.
\end{split}
\]

\end{proof}

\begin{proof}[Proof of Theorem~\ref{thm:6.2}] It remains to construct
\[
d: \Lambda^p\Omega^1_\scC \to \Lambda^{p+1}\Omega^1_\scC
\]      
for $p>1$, show that $d\circ d =0$ and prove that $d$ is a derivation
of degree 1.   The construction of $d$ is essentially identical to the one in
\cite{stacks-project}.   Consider the map
\[
\beta: \overbrace{\Omega_\scC^1\times \cdots \times \Omega_\scC^1}^{p}
\to \Lambda^{p+1} \Omega^1_\scC, \qquad \beta(\alpha_1,\ldots, \alpha_p) = \sum
_i (-1)^{i+1} \alpha_1\wedge \ldots\wedge d_\scC \alpha_i \wedge
\ldots\wedge \alpha_p.
 \] 
 Since $\beta$ is $\R$-multilinear it descends to an $\R$-linear map
 \[
\gamma: \Omega_\scC^1\otimes_\R\cdots \otimes_\R \Omega_\scC^1 \to
\Omega^{p+1}_\scC, \qquad \gamma(\alpha_1\otimes\cdots \otimes \alpha_p) = \sum
_i (-1)^{i+1} \alpha_1\wedge \ldots\wedge d_\scC \alpha_i \wedge \ldots\wedge \alpha_p.
 \]  
 We will show that $\gamma$ factors through the natural surjection
\[
\Omega_\scC^1\otimes_\R\cdots \otimes_\R \Omega_\scC^1 \to
\Lambda^{p}\Omega^1_\scC %
\]
to give the desired map
$d:\Lambda^p \Omega^1_\scC \to \Lambda^{p+1}\Omega^1_\scC$ with
\begin{equation} \label{eq:*p27}
d(\alpha_1\wedge \ldots \wedge \alpha_p) = \sum
_i (-1)^{i+1} \alpha_1\wedge \ldots\wedge d_\scC \alpha_i \wedge
\ldots\wedge \alpha_p
\end{equation}
for all $\alpha_1, \ldots, \alpha_p\in \Omega^1_\scC$.
It is easy to see that $\gamma$ is alternating.  It remains to check
that for any two indices $i\not =j$, any $a\in \scC$ and any
$\alpha_1,\ldots, \alpha_p\in \Omega^p_\scC$
\begin{equation}\label{eq:7.1}
\gamma( \alpha_1\otimes\cdots \otimes a\alpha_i\otimes \cdots \otimes \alpha_p -
\alpha_1\otimes\cdots\otimes a\alpha_j\otimes \cdots
\otimes \alpha_p) = 0.
\end{equation}
Now
\[
  \begin{split}
\gamma(\alpha_1\otimes\cdots \otimes a\alpha_i\otimes \cdots \otimes
\alpha_p )  &= a \gamma (\alpha_1\otimes\cdots \otimes
\alpha_p ) + (-1) ^{i+1} \alpha_1\wedge \ldots \wedge
\alpha_{i-1}\wedge da \wedge \alpha_i\wedge \ldots\wedge \alpha_p\\
&= a\gamma (\alpha_1\otimes\cdots \otimes
\alpha_p ) + da\wedge \alpha_1\wedge \ldots \wedge \alpha_p.
\end{split}
\]
Hence \eqref{eq:7.1} holds.  This proves the existence of the map
$d:\Lambda^p\Omega^1_\scC \to \Lambda^{p+1}\Omega^1_\scC$ satisfying \eqref{eq:*p27}.
By Lemma~\ref{lem:4.3aug}, $d(d_\scC a)= 0$ for  all $a\in \scC$.
Therefore \eqref{eq:*p27} implies that
\[
  \begin{split}
    d(ad_\scC b_1\wedge \ldots \wedge d_\scC b_k ) &=  d(ad_\scC b_1)
    \wedge d_\scC b_2 \ldots \wedge d_\scC b_k\\
    &= d_\scC a \wedge d_\scC b_1\wedge \ldots \wedge d_\scC b_k
\end{split}
\]
for all $k\geq 1$ and all $a,b_1,\ldots, b_k\in \scC$.  Hence
\eqref{eq:6.d} holds.
Since $d_\scC (1) = 0$, it follows that 
\[
d(  d_\scC b_1\wedge  \ldots \wedge d_\scC b_k) =d(  1\, d_\scC b_1\wedge  \ldots \wedge d_\scC b_k) = 0.
\]
Consequently
\[
(d \circ d) \,(a\,\,   d_\scC b_1\wedge  \ldots \wedge d_\scC b_k) = d(
d_\scC a \wedge   d_\scC b_1\wedge  \ldots \wedge d_\scC b_k) = 0.
\]  
We conclude that $d\circ d = 0$.

It remains to show that
\begin{equation} \label{eq:6d2}
d (\alpha \wedge \beta) = (d\alpha) \wedge \beta + (-1)^{|\alpha|}\alpha
\wedge d \beta
\end{equation}  
for any $\alpha, \beta \in \Lambda^\bullet
\Omega^1_\scC$ (here as before $|\alpha|$ is the degree of $\alpha$).
It is no loss of generality to assume that $\alpha = a_0 \, d_\scC a_1\wedge
\ldots \wedge d_\scC a_k$ and  $\beta =b_0\, d_\scC b_1\wedge \ldots \wedge
d_\scC b_\ell$ for some $k,\ell \geq 0$ and some $a_0, \ldots b_\ell \in \scC$.   Then 
\[
\alpha \wedge\beta = a_0 b_0\, d_\scC a_1\wedge \ldots  d_\scC a_k
\wedge d_\scC b_1 \wedge \ldots \wedge d_\scC b_\ell
\]
and \eqref{eq:6d2} follows easily from that fact that  \eqref{eq:6.d} holds.
\end{proof}

It will be useful to know that 
the $\cin$-algebraic de Rham complex $\Lambda^\bullet\Omega^1_\scC $  is
functorial in the $\cin$-ring $\scC$:
\begin{proposition} \label{prop:6.5}
A map $f:\scC\to \scB$ of $\cin$-rings induces a unique map
$\Lambda^\bullet(f): \Lambda^\bullet\Omega^1_\scC \to \Lambda^\bullet\Omega^1_\scB$ of
commutative differential graded algebras.  Explicitly
\[
  \Lambda^\bullet(f) (a_0 d_\scC a_1\wedge \cdots \wedge d_\scC a_k)
  = f(a_0) d_\scB f(a_1)\wedge \cdots \wedge d_\scB f(a_k)
\]  
for all $k>0$ and all $a_0,\ldots, a_k\in \scC$.
\end{proposition}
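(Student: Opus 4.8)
The plan is to build $\Lambda^\bullet(f)$ degree by degree out of the degree-one map $\Lambda^1(f):\Omega^1_\scC\to\Omega^1_\scB$ already produced in Lemma~\ref{lem:5.12}, and then to check the two defining properties of a $\CDGA$-morphism --- multiplicativity and compatibility with $d$ --- directly on the generators $a_0\,d_\scC a_1\wedge\cdots\wedge d_\scC a_k$. In degree $0$ I set $\Lambda^0(f):=f$, and in degree $1$ I take $\Lambda^1(f)$. The key observation is that, by Lemma~\ref{lem:5.12}, $\Lambda^1(f)$ is a homomorphism of $\scC$-modules once $\Omega^1_\scB$ is regarded as a $\scC$-module by restriction of scalars along $f$; this is exactly what lets me pass to exterior powers.

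For $k\ge 1$ I would construct $\Lambda^k(f)$ using the universal property of the exterior power. Consider the map
\[
\mu:\underbrace{\Omega^1_\scC\times\cdots\times\Omega^1_\scC}_{k}\to \Lambda^k\Omega^1_\scB,\qquad
\mu(\alpha_1,\ldots,\alpha_k)=\Lambda^1(f)(\alpha_1)\wedge\cdots\wedge\Lambda^1(f)(\alpha_k).
\]
Since $\Lambda^1(f)(c\alpha_i)=f(c)\,\Lambda^1(f)(\alpha_i)$ for $c\in\scC$, and since $\mu$ vanishes whenever two entries coincide, $\mu$ is alternating and $\scC$-multilinear, the target being a $\scC$-module via $f$. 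By the universal property of the exterior power $\Lambda^k_\scC\Omega^1_\scC$ (alternating $\scC$-multilinear maps factor uniquely through it), $\mu$ descends to a unique $\scC$-linear map $\Lambda^k(f):\Lambda^k\Omega^1_\scC\to\Lambda^k\Omega^1_\scB$ with $\Lambda^k(f)(\alpha_1\wedge\cdots\wedge\alpha_k)=\Lambda^1(f)(\alpha_1)\wedge\cdots\wedge\Lambda^1(f)(\alpha_k)$. By construction the resulting sequence $\Lambda^\bullet(f)=\{\Lambda^k(f)\}$ preserves wedge products, so $(f,\Lambda^\bullet(f))$ is a morphism of CGAs lying over $f$.

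Next I would read off the explicit formula and deduce compatibility with $d$. Writing $a_0\,d_\scC a_1\wedge\cdots\wedge d_\scC a_k = a_0\wedge d_\scC a_1\wedge\cdots\wedge d_\scC a_k$ and using multiplicativity together with $\Lambda^1(f)\circ d_\scC = d_\scB\circ f$ from Lemma~\ref{lem:5.12}, I obtain
\[
\Lambda^\bullet(f)(a_0\,d_\scC a_1\wedge\cdots\wedge d_\scC a_k)=f(a_0)\,d_\scB f(a_1)\wedge\cdots\wedge d_\scB f(a_k),
\]
which is the claimed formula. Applying $d$ to both sides and invoking the explicit description \eqref{eq:6.d} of the differential from Theorem~\ref{thm:6.2} (over $\scB$ on the left, over $\scC$ on the right) shows that both $d\circ\Lambda^\bullet(f)$ and $\Lambda^\bullet(f)\circ d$ send this generator to $d_\scB f(a_0)\wedge d_\scB f(a_1)\wedge\cdots\wedge d_\scB f(a_k)$. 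Since every element of $\Lambda^\bullet\Omega^1_\scC$ is a finite sum of such generators and both composites are $\R$-linear, they agree everywhere, so $\Lambda^\bullet(f)$ is a morphism of CDGAs.

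Finally, uniqueness is forced: any $\CDGA$-morphism lying over $f$ must send $a_0\in\scC=\Lambda^0\Omega^1_\scC$ to $f(a_0)$, must send $d_\scC a\mapsto d_\scB f(a)$ because it commutes with $d$, and must respect $\wedge$, so its value on each generator $a_0\,d_\scC a_1\wedge\cdots\wedge d_\scC a_k$ is determined; as these generate, the morphism is unique. The only genuinely delicate point is the well-definedness of $\Lambda^k(f)$ in the second paragraph: one must keep track of the fact that the exterior powers of $\Omega^1_\scC$ and $\Omega^1_\scB$ are formed over the \emph{different} rings $\scC$ and $\scB$, and it is precisely the $\scC$-linearity of $\Lambda^1(f)$ (with scalars acting through $f$) that makes the alternating map $\mu$ descend correctly.
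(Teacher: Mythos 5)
Your proof is correct and follows essentially the same route as the paper: obtain $\Lambda^1(f)$ from Lemma~\ref{lem:5.12}, pass to exterior powers (the paper asserts this step in one line, whereas you justify it carefully via the universal property of $\Lambda^k$ with scalars on $\Omega^1_\scB$ restricted along $f$), and verify the explicit formula and compatibility with $d$ on the generators $a_0\, d_\scC a_1\wedge\cdots\wedge d_\scC a_k$ using \eqref{eq:6.d}. The only substantive difference is that you also spell out the uniqueness argument, which the paper's proof leaves implicit.
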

\begin{proof}
By Lemma~\ref{lem:5.12} the map $f:\scC\to \scB$ induces a map
$\Lambda^1(f): \Omega^1_\scC\to \Omega^1_\scB$. By taking exterior
powers of $\Lambda^1(f)$
we get $\Lambda ^k(f): \Lambda^k\Omega^1_\scC \to \Lambda^k\Omega^1_\scB$ for each $k\geq
1$.  Together these maps give us a map of commutative graded algebras
\[
\Lambda^\bullet(f) := \Lambda^\bullet \Omega^1_\scC \to \Lambda ^\bullet\Omega^1_\scB
\]
with the property that
\[
\Lambda^\bullet(f) (a_0 d_\scC a_1\wedge \cdots \wedge d_\scC a_k)
  = f(a_0) d_\scB f(a_1)\wedge \cdots \wedge d_\scB f(a_k)
\]
for all $k>0$ and all $a_0,\ldots, a_k\in \scC$ (since $d_\scB (f(a))
= \Lambda^1(f) \left(d_\scC a\right)$ for all $a\in \scC$).
Now
\[
\begin{split}  
d \left( \Lambda^\bullet(f)\, (a_0 d_\scC a_1\wedge \cdots \wedge d_\scC
  a_k)\right)
&=d\left( f(a_0)\wedge  d_\scB f(a_1)\wedge \cdots \wedge d_\scB f(a_k)\right)\\
&=d_\scB f(a_0)\wedge  d_\scB f(a_1)\wedge \cdots \wedge d_\scB f(a_k)\\
&= \Lambda^\bullet(f) \left( d_\scC a_0 \wedge  d_\scC a_1\wedge \cdots \wedge d_\scC
 a_k\right)\\
&= \Lambda^\bullet(f) \left( d \left(  a_0 \, d_\scC a_1\wedge \cdots \wedge d_\scC
 a_k   \right)\right).
\end{split}
\]
Hence $\Lambda^\bullet(f) $ also commutes with $d$
and we are done.
\end{proof}

\section{Differential forms on $\cin$-ringed spaces}
\label{sec:forms}

In this section we construct sheaves of commutative differential
graded algebras over local $\cin$-ringed spaces.
We think of sections of these sheaves as
differential forms, since for manifolds with corners they are sheaves
of ordinary
differential forms. We show that our differential forms pull back,
that pull back commutes with the exterior derivatives and exterior
multiplication and that our differential forms can be integrated over
simplices and, more generally, over smooth singular chains.
Consequently a version of Stokes' theorem holds for these differential
forms. We start by constructing the appropriate presheaves.

\begin{lemma}  \label{constr:9.3}Let $(M, \scA)$ be a $\cin$-ringed
  space. There exists a presheaf $(\Lambda^\bullet \Omega^1_\scA ,
  \wedge, d)$ of commutative differential graded algebras over $\scA$ so that for each
open subset $U\subset M$ the CDGA $(\Lambda^\bullet \Omega^1_\scA (U)
, \wedge_U, dU)$ is the $\cin$-algebraic de Rham complex of the
$\cin$-ring $\scA(U)$.
\end{lemma}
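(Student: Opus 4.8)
The plan is to realize the presheaf as the composite of two functors, so that essentially all of the content has already been supplied by Theorem~\ref{thm:6.2} and Proposition~\ref{prop:6.5}. Since $\scA$ is a sheaf of $\cin$-rings it is in particular a functor $\scA:\op{\Open(M)}\to \cring$ from the poset of open subsets of $M$ to the category of $\cin$-rings. I would produce a functor $\cring\to\CDGA$ and precompose it with $\scA$.

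First I would verify that the assignment $\scC\mapsto (\Lambda^\bullet\Omega^1_\scC,\wedge,d)$ on objects (Theorem~\ref{thm:6.2}) together with the assignment $f\mapsto \Lambda^\bullet(f)$ on morphisms (Proposition~\ref{prop:6.5}) defines a functor $\Lambda^\bullet\Omega^1_{(-)}:\cring\to\CDGA$. The only point to check is preservation of identities and composition. Both follow at once from the explicit formula
\[
\Lambda^\bullet(f)\bigl(a_0\, d_\scC a_1\wedge\cdots\wedge d_\scC a_k\bigr)=f(a_0)\, d_\scB f(a_1)\wedge\cdots\wedge d_\scB f(a_k)
\]
of Proposition~\ref{prop:6.5}: for $\scC\xrightarrow{f}\scB\xrightarrow{g}\scE$ both $\Lambda^\bullet(g)\circ\Lambda^\bullet(f)$ and $\Lambda^\bullet(g\circ f)$ send the generator $a_0\, d_\scC a_1\wedge\cdots\wedge d_\scC a_k$ to $g(f(a_0))\, d_\scE g(f(a_1))\wedge\cdots\wedge d_\scE g(f(a_k))$, while $\Lambda^\bullet(\id_\scC)$ fixes every such generator. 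Since these elements span $\Lambda^\bullet\Omega^1_\scC$ over $\R$ and since both sides are maps of CGAs, hence $\R$-linear, the two maps agree. (Alternatively, functoriality is forced directly by the uniqueness clauses in Lemma~\ref{lem:5.12} and Proposition~\ref{prop:6.5}.)

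Having the functor, I would define the desired presheaf to be the composite
\[
\op{\Open(M)}\xrightarrow{\ \scA\ }\cring\xrightarrow{\ \Lambda^\bullet\Omega^1_{(-)}\ }\CDGA,
\]
so that $(\Lambda^\bullet\Omega^1_\scA)(U)=\Lambda^\bullet\Omega^1_{\scA(U)}$ and the restriction map attached to $V\subseteq U$ is $\Lambda^\bullet(\rho^U_V)$, where $\rho^U_V:\scA(U)\to\scA(V)$ is the restriction map of the structure sheaf. It then remains only to match this composite against the definition of a presheaf of CDGAs over $(M,\scA)$. Concretely I would set $\scM^k(U):=\Lambda^k\Omega^1_{\scA(U)}$ with structure maps $\Lambda^k(\rho^U_V)$, so that each $\scM^k$ is a presheaf of $\scA$-modules; the maps $\wedge_U$ and $d_U$ are the product and the differential of the CDGA $\Lambda^\bullet\Omega^1_{\scA(U)}$ furnished by Theorem~\ref{thm:6.2}. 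That $\wedge$ and $d$ are maps of presheaves (equivalently, that restriction commutes with $\wedge$ and with $d$) is precisely the statement that each $\Lambda^\bullet(\rho^U_V)$ is a morphism of CDGAs, which is the content of Proposition~\ref{prop:6.5}.

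There is no serious obstacle here: the genuinely nontrivial facts, namely the existence of the differential on $\Lambda^\bullet\Omega^1_\scC$ and the naturality of the whole construction in $\scC$, are exactly Theorem~\ref{thm:6.2} and Proposition~\ref{prop:6.5}. The one place that warrants an explicit sentence is the functoriality check for identities and composites, which as indicated above is immediate from the generator formula; everything else is unwinding the definitions of a functor $\op{\Open(M)}\to\CDGA$ and of a presheaf of CDGAs over $(M,\scA)$.
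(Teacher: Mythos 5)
Your proposal is correct and takes essentially the same approach as the paper: both build the presheaf by applying the $\cin$-algebraic de Rham complex construction to each $\cin$-ring $\scA(U)$ and invoking Proposition~\ref{prop:6.5} to make the restriction maps $\Lambda^\bullet(\rho^U_V)$ into morphisms of CDGAs, so that $\wedge$ and $d$ become maps of presheaves. Your explicit verification that $\Lambda^\bullet\Omega^1_{(-)}$ preserves identities and composites is a detail the paper leaves implicit (it follows from the uniqueness clauses in Lemma~\ref{lem:5.12} and Proposition~\ref{prop:6.5}, as you note), not a different argument.
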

\begin{proof}

For every open set $U\subset M$ we have an $\scA(U)$-module
$\Omega^1_{\scA(U)}$ of K\"ahler differentials, which comes with the
universal derivation $d:\scA(U) \to \Omega^1_{\scA(U)}$.   By
Lemma~\ref{lem:5.12} the assignment $U\mapsto \Omega^1_{\scA(U)}$ is
a presheaf of $\scA$-modules.  We denote this presheaf by
$\Omega^1_\scA$. Consequently for all $k\in \Z$, $U\mapsto \Lambda^k
\Omega^1_{\scA(U)}$ is also a presheaf of $\scA$-modules.  We denote it
by $\Lambda^k \Omega^1_\scA$.   Proposition~\ref{prop:6.5} implies for
all $k$ and $\ell$
that we have maps
\[
\wedge: \Lambda^k\Omega^1_\scA \times \Lambda^\ell \Omega^1_\scA\to
  \Lambda^{k+\ell}\Omega^1_\scA\qquad \textrm{and}\qquad d:
  \Lambda^k\Omega^1_\scA \to   \Lambda^{k+1} \Omega^1_\scA 
\]  
of presheaves of $\scA$-modules and that these maps turn the sequence
$\{\Lambda^k\Omega^1_\scA\}_{k\in \Z}$ of presheaves into a presheaf
  of commutative graded $\scA$-algebras.  
  We thus get a presheaf of
  CDGAs $(\{\Lambda^k\Omega^1_\scA\} _{k\in \Z}, \wedge, d)$ over the
  sheaf $\scA$.
\end{proof}

\begin{notation}
Given a $\cin$-ringed space $(M, \scA)$ we denote the presheaf of
CDGAs produced  by Lemma~\ref{constr:9.3} either by $(\Lambda^\bullet
\Omega^1_\scA, d, \wedge)$ or simply by $\Lambda^\bullet
\Omega^1_\scA$ with  $d$ and $\wedge$ understood.
\end{notation}  

As was mentioned in Remark~\ref{rmrk:3.9aug}, if $M$ is a manifold with
corners then the presheaf $\Omega^1_{\cin_M}$ of K\"ahler
differentials on $M$ agrees with the sheaf of sections
$\bOmega^1_{dR, M}$ of the cotangent bundle $T^*M\to M$; this is a
result of Francis-Staite \cite[Proposition~4.7.5]{FS}. Hence in
particular the presheaf $\Omega^1_{\cin_M}$ is a sheaf.  We now argue
that the whole presheaf
$(\Lambda^\bullet \Omega^1_{\cin_M}, d, \wedge)$ of $\cin$-algebraic
de Rham forms is a sheaf:

\begin{lemma} \label{lem:CDGAcorners}
Let $M$ be a manifold with corners.  Then the presheaf $(\Lambda^\bullet \Omega^1_{\cin_M},
  d, \wedge)$ of $\cin$-algebraic de Rham forms is 
  the 
  sheaf  $(\bOmega_{dR,M}^\bullet, d, \wedge)$ of ordinary de
  Rham differential forms. 
\end{lemma}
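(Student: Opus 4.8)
The plan is to reduce the statement about the full exterior algebra to the already-known degree-one identification $\Omega^1_{\cin_M}=\bOmega^1_{dR,M}$ (Remark~\ref{rmrk:3.9aug}, via \cite{FS}), combined with two structural facts: that exterior powers of the module of sections of a vector bundle compute sections of the exterior-power bundle, and the uniqueness of the differential furnished by Theorem~\ref{thm:6.2}. Throughout I fix an open subset $U\subseteq M$, which is itself a second-countable Hausdorff manifold with corners, and I prove equality of the CDGAs on each such $U$ compatibly with restriction.

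First I would recall that, by Remark~\ref{rmrk:3.9aug} and \cite[Proposition~4.7.5]{FS}, the $\cin(U)$-module $\Omega^1_{\cin(U)}$ of $\cin$-K\"ahler differentials is the module $\bOmega^1_{dR}(U)=\Gamma(T^*U)$ of ordinary one-forms, and under this identification the universal derivation $d_{\cin(U)}$ is the ordinary exterior derivative on functions (Example~\ref{ex:kropen}). Thus the degree-one case is in hand, and it remains to promote this identification to all degrees, to match the wedge products, and to match the differentials.

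The main point is to establish, for every $k$, that the natural $\cin(U)$-linear map
\[
\Lambda^k_{\cin(U)}\,\bOmega^1_{dR}(U)\;\longrightarrow\;\bOmega^k_{dR}(U),\qquad
\alpha_1\wedge\cdots\wedge\alpha_k\;\mapsto\;\alpha_1\wedge\cdots\wedge\alpha_k
\]
given by the pointwise wedge of one-forms is an isomorphism. Here the source is the $k$-th exterior power taken over the $\cin$-ring $\cin(U)$, exactly as in the construction of $\Lambda^\bullet\Omega^1_\scA$; the map is well defined because the pointwise wedge $\Gamma(T^*U)^{\times k}\to\Gamma(\Lambda^k T^*U)$ is $\cin(U)$-multilinear and alternating, hence factors through the exterior power. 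This is the Serre--Swan-type statement that exterior powers of sections compute sections of exterior powers: since $\Gamma(T^*U)$ is a finitely generated projective $\cin(U)$-module (Serre--Swan holds for second-countable Hausdorff manifolds with corners), the formation of exterior powers commutes with $\Gamma$. Concretely, over a corner chart the module $\bOmega^1_{dR}$ is free on $dx_1,\ldots,dx_n$ and both sides are free on the $\binom{n}{k}$ wedges $dx_{i_1}\wedge\cdots\wedge dx_{i_k}$ with $i_1<\cdots<i_k$; one then globalizes by a partition of unity subordinate to an atlas of $U$ (partitions of unity exist since $U$ is again a second-countable Hausdorff manifold with corners). This gluing step is where the real work lies, and it is the step I expect to be the main obstacle.

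Finally I would note that these isomorphisms are natural in $U$ — both sides are the pointwise wedge, which commutes with restriction — so they assemble into an isomorphism of presheaves of commutative graded algebras $\Lambda^\bullet\Omega^1_{\cin_M}\cong\bOmega^\bullet_{dR,M}$ carrying $\wedge$ to $\wedge$. To match the differentials I invoke the uniqueness clause of Theorem~\ref{thm:6.2}: on each $\cin(U)$ the $\cin$-algebraic differential is the \emph{unique} degree-$1$ derivation of $\Lambda^\bullet\Omega^1_{\cin(U)}$ squaring to zero and extending $d_{\cin(U)}$. Transported along the isomorphism above, the ordinary de Rham differential is likewise a degree-$1$ derivation squaring to zero whose restriction to functions is the exterior derivative, which by Example~\ref{ex:kropen} is precisely $d_{\cin(U)}$; by uniqueness the two differentials coincide. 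Hence $(\Lambda^\bullet\Omega^1_{\cin_M},\wedge,d)$ and $(\bOmega^\bullet_{dR,M},\wedge,d)$ agree as presheaves of CDGAs, and since the right-hand side is a sheaf the presheaf on the left is a sheaf as well.
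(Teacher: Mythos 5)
Your proposal is correct and takes essentially the same approach as the paper: reduce to the degree-one identification $\Omega^1_{\cin_M}\cong\bOmega^1_{dR,M}$ from Francis--Staite and then invoke the fact that exterior powers of the module of sections of a vector bundle compute sections of the exterior-power bundle (the paper simply cites Conlon for this, where you sketch a Serre--Swan/partition-of-unity argument). Your explicit matching of the two differentials via the uniqueness clause of Theorem~\ref{thm:6.2} is a point the paper's proof leaves implicit, and it is a legitimate way to finish that step.
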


\begin{proof}
Given a vector bundle $F$ over a manifold with corners $M$ denote temporarily its sheaf of
sections by $\scS_F$.

For a vector bundle $E\to M$ over a (second countable
Hausdorff) manifold with corners $M$ taking $k$th exterior power of the bundle commutes
with taking global sections: there is an isomorphism of $\cin_M(M)$-modules
\begin{equation}
\varphi_M: \Lambda^k \left(\Gamma(E)\right)\to \Gamma(\Lambda^k E)
\end{equation}
with the property that for any $k$ global sections $s_1,\ldots, s_k$
the section $\varphi(s_1\wedge\cdots \wedge s_k)$ is the section
defined by
\[
q\mapsto s_1(q)\wedge \cdots \wedge s_k(q)
\]
(the wedge on the right is in the exterior algebra  $\Lambda^\bullet
E_q$ of the fiber $E_q$ of $E$).   See, for example, \cite[Section 7.5]{Conlon}.
If $V\subset U\subset M$ are two open sets in $M$ then
the diagram
\[
\xy
(-15,10)*+{\Lambda^k \Gamma(E|_U)}="1";
(15,10)*+{\Gamma(\Lambda^k(E|_U))}="2";
(-15,-5)*+{\Lambda^k \Gamma(E|_V)}="3";
(15,-5)*+{\Gamma(\Lambda^k(E|_V))}="4";
{\ar@{->} ^{\varphi_U} "1";"2"};
{\ar@{->}_{|_V} "1";"3"};
{\ar@{->} _{|_V}"2";"4"};
{\ar@{->}_{\varphi_V} "3";"4"};
\endxy
\]  
commutes, where the vertical maps are restrictions.  Hence we get an
isomorphism of presheaves
\[
\varphi: \Lambda^k \scS_E \to \scS_{\Lambda^k E}.
\]  
Since the right hand side is a sheaf, the exterior power $\Lambda^k
\scS_E $ is a sheaf as well.

Now consider $E = T^*M$.  Then
\[
  \bOmega^k_{dR, M} = \scS_{\Lambda^k T^*M} \simeq \Lambda^k
  \scS_{T^*M} = \Lambda^k \bOmega_{dR,M}^1.
\]
Since the presheaf $\Omega^1 _{\cin_M}$ is (isomorphic to)
$\bOmega_{dR,M}^1$ \cite[Proposition~4.7.5]{FS},  the $k$th exterior power $ \Lambda^k \Omega^1
_{\cin_M}$ of $\Omega^1 _{\cin_M}$ is (isomorphic to)  $ \Lambda^k \bOmega_{dR,M}^1 $. The
result follows.
\end{proof}

\begin{lemma}  \label{lem:9.5} A map  $(f,f_\#): (M, \scA)\to (N, \scB)$  of
 local $\cin$-ringed spaces induces a map 
\[
\tilde{f}: \Lambda^\bullet \Omega^1_\scB \to  f_*(\Lambda^\bullet \Omega^1_\scA)
\]
of presheaves of CDGAs.
\end{lemma}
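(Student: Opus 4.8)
The plan is to assemble $\tilde{f}$ from the open-set components of $f_\#$ together with the functoriality of the $\cin$-algebraic de Rham complex established in Proposition~\ref{prop:6.5}. Fix an open set $U\subseteq N$. The sheaf map $f_\#:\scB\to f_*\scA$ has as its $U$-component a homomorphism of $\cin$-rings
\[
f_{\#,U}:\scB(U)\longrightarrow (f_*\scA)(U)=\scA(f\inv(U)).
\]
Applying Proposition~\ref{prop:6.5} to $f_{\#,U}$ yields a morphism of CDGAs
\[
\tilde{f}_U:=\Lambda^\bullet(f_{\#,U}):\Lambda^\bullet\Omega^1_{\scB(U)}\longrightarrow \Lambda^\bullet\Omega^1_{\scA(f\inv(U))},
\]
given on generators by $\tilde{f}_U(a_0\,d a_1\wedge\cdots\wedge d a_k)=f_{\#,U}(a_0)\,d f_{\#,U}(a_1)\wedge\cdots\wedge d f_{\#,U}(a_k)$. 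First I would note that the source and target presheaves take the expected values on $U$: by Lemma~\ref{constr:9.3} we have $(\Lambda^\bullet\Omega^1_\scB)(U)=\Lambda^\bullet\Omega^1_{\scB(U)}$, while by Remark~\ref{rmrk:2.41-7} and Lemma~\ref{constr:9.3} we have $\big(f_*(\Lambda^\bullet\Omega^1_\scA)\big)(U)=(\Lambda^\bullet\Omega^1_\scA)(f\inv(U))=\Lambda^\bullet\Omega^1_{\scA(f\inv(U))}$. Hence each $\tilde{f}_U$ is a morphism of the correct type.

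It remains to check that the family $\{\tilde{f}_U\}_{U\in\Open(N)}$ is natural in $U$, i.e.\ compatible with restrictions. Write $\rho^U_V:\scB(U)\to\scB(V)$ and $\sigma^{f\inv(U)}_{f\inv(V)}:\scA(f\inv(U))\to\scA(f\inv(V))$ for the restriction maps of the structure sheaves, for $V\subseteq U\subseteq N$. By Lemma~\ref{constr:9.3} the restriction maps of the two presheaves are exactly $\Lambda^\bullet(\rho^U_V)$ and $\Lambda^\bullet\big(\sigma^{f\inv(U)}_{f\inv(V)}\big)$, so naturality amounts to the identity
\[
\Lambda^\bullet\big(\sigma^{f\inv(U)}_{f\inv(V)}\big)\circ\tilde{f}_U=\tilde{f}_V\circ\Lambda^\bullet(\rho^U_V).
\]
Because $f_\#$ is a morphism of sheaves of $\cin$-rings, its components commute with restriction, so $\sigma^{f\inv(U)}_{f\inv(V)}\circ f_{\#,U}=f_{\#,V}\circ\rho^U_V$ as maps of $\cin$-rings. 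Applying $\Lambda^\bullet$ and using that it respects composition --- immediate from the explicit generator formula of Proposition~\ref{prop:6.5}, since both sides agree on the generators $a_0\,d a_1\wedge\cdots\wedge d a_k$ --- turns this equality of ring maps into precisely the displayed identity. Therefore $\tilde{f}=\{\tilde{f}_U\}_{U\in\Open(N)}$ is a map of presheaves of CDGAs, as claimed.

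I expect no genuine obstacle here: the argument is purely formal, resting only on the functoriality of the assignment $\scC\mapsto\Lambda^\bullet\Omega^1_\scC$ and on the fact that $f_\#$ commutes with restrictions. The one place to be careful is the bookkeeping with the identifications $(f_*\scA)(U)=\scA(f\inv(U))$ and with the observation that the presheaf restrictions on both sides are the CDGA maps $\Lambda^\bullet(-)$ induced by the ring-level restrictions, rather than maps requiring any separate construction.
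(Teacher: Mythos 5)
Your proposal is correct and follows essentially the same route as the paper's proof: apply Proposition~\ref{prop:6.5} to the components $f_{\#,U}:\scB(U)\to\scA(f\inv(U))$, identify $\bigl(f_*(\Lambda^\bullet\Omega^1_\scA)\bigr)(U)$ with $\Lambda^\bullet\Omega^1_{\scA(f\inv(U))}$, and deduce compatibility with restrictions from the fact that $f_\#$ commutes with restrictions together with the functoriality of $\Lambda^\bullet(-)$. The only cosmetic difference is that you spell out explicitly that $\Lambda^\bullet$ respects composition via the generator formula, where the paper simply invokes Proposition~\ref{prop:6.5} to conclude the relevant square of CDGA maps commutes.
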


\begin{proof}
The proof is another application of functoriality of $\Lambda^\bullet
\Omega^1_\scC$ in the $\cin$-ring $\scC$, which is Proposition~\ref{prop:6.5}.
For any sheaf $\scC$ of $\cin$-rings  on a space $N$, any $k\in \Z$ and any open set $W\subset
N$
\[
(\Lambda^k \Omega^1_\scC)(W) = \Lambda^k (\Omega^1_{\scC(W)}),
\]
where on the left we evaluate the presheaf on an open set, and on the
right we take the $k$th exterior power of an $\scC(W)$-module
$\Omega^1_{\scC(W)}$.  It follows that for an open set $U\subset N$
\[
  \left(\Lambda^k \Omega^1_{f_*\scA} \right)(U)=
  \Lambda^k \left(\Omega^1_{(f_*\scA)(U)} \right) =
  (\Lambda^k \Omega^1_\scA)(f\inv (U)) =\left( f_*( \Lambda^k \Omega^1_\scA)\right)
  \, (U).
\]  
Consequently,
\begin{equation} \label{eq:9.1}
(\Lambda^\bullet \Omega^1_{\scA (f\inv (U))}, \wedge, d) =\left[ f_*
  (\Lambda^\bullet  \Omega^1_\scA, \wedge, d)\right] (U),
\end{equation}
an equality of CDGAs of modules.
For any pair of open subsets $V\subset U\subset N$ of $N$ the diagram
\[
\xy
(-15,10)*+{\scB(U)}="1";
(15,10)*+{\scA(f\inv(U))}="2";
(-15,-5)*+{\scB(V)}="3";
(15,-5)*+{\scA(f\inv(V))}="4";
{\ar@{->}^{f_\#} "1";"2"};
{\ar@{->}^{} "1";"3"};
{\ar@{->}  "2";"4"};
{\ar@{->}_{f_\#} "3";"4"};
\endxy
\]  
commutes (the vertical maps are the restrictions).  By
Proposition~\ref{prop:6.5}
the diagram 
\[
\xy
(-15,10)*+{\Lambda^\bullet \Omega^1_{\scB(U)}}="1";
(25,10)*+{\Lambda^\bullet \Omega^1_{\scA(f\inv(U))}}="2";
(-15,-5)*+{\Lambda^\bullet \Omega^1_{\scB(V)}}="3";
(25,-5)*+{\Lambda^\bullet \Omega^1_{\scA(f\inv(V))}}="4";
{\ar@{->}^{\Lambda^\bullet(f_\#) } "1";"2"};
{\ar@{->}^{} "1";"3"};
{\ar@{->}  "2";"4"};
{\ar@{->}_{\Lambda^\bullet(f_\#)} "3";"4"};
\endxy
\]  
commutes.  Equation \eqref{eq:9.1} now implies that the maps
$\Lambda^\bullet(f_\#) $ of modules assemble into a desired map $ \tilde{f}$
of presheaves of  CDGAs over the sheaf $\scB$.
\end{proof}

In general there is no reason to expect that the presheaf
$\Lambda^\bullet \Omega^1_\scA$ of CDGAs associated to 
 $\cin$-ringed  space $(M,\scA)$  is a sheaf.  So we need
 to sheafify.  Recall that there is a sheafification functor
 $\sh:\Psh_M\to Sh_M$ from the category of presheaves of
 $\scA$-modules to the category of sheaves.
 The functor is left-adjoint to the inclusion functor
 $i: \Sh_M\hookrightarrow \Psh_M$  (more often than not we will
 suppress the inclusion functor).

\begin{lemma} \label{lem:9.7}
Let $(\{\cM^k\}_{k=0}^\infty, d, \wedge)$ be a presheaf of CDGAs on
a $\cin$-ringed space $(M, \scA)$.  Then
$(\{\sh(\cM^k)\}_{k=0}^\infty, \sh(d), \sh(\wedge))$
is a sheaf of CDGAs on
the $\cin$-ringed space $(M, \scA)$.  Here as before $\sh: \Psh_M\to
\Sh_M$ is the sheafification functor.
\end{lemma}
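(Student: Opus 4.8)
The plan is to exploit the fact that sheafification is left exact, so that it transports any algebraic structure encoded by commutative diagrams built from finite products. Concretely, I regard the presheaf of CDGAs $(\cM^\bullet,\wedge,d)$ as a sequence of presheaves of $\scA$-modules $\{\cM^k\}_{k\in\Z}$ equipped with structure morphisms of presheaves: the differentials $d^k\colon \cM^k\to \cM^{k+1}$, the (objectwise bilinear) wedge maps $\wedge^{k,\ell}\colon \cM^k\times\cM^\ell\to\cM^{k+\ell}$, and the $\scA$-module structure maps $\scA\times\cM^k\to\cM^k$ and $\cM^k\times\cM^k\to\cM^k$. All the CDGA axioms — associativity and graded commutativity of $\wedge$, the Leibniz rule for $d$, $d\circ d=0$, and $\scA$-bilinearity — are commutative diagrams among these morphisms whose sources are finite products of the $\cM^k$ and of $\scA$. (Here I use the equivalence between the bilinear and the tensor descriptions of the wedge recorded in Definition~\ref{def:12.16}.)

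First I would recall the standard fact that the sheafification functor $\sh\colon\Psh_M\to\Sh_M$ is left exact, i.e.\ it preserves finite limits, and in particular finite products: the canonical comparison map $\sh(\scF\times\scG)\to\sh\scF\times\sh\scG$ is an isomorphism, naturally in $\scF$ and $\scG$. Since $\scA$ is already a sheaf we have $\sh\scA=\scA$. Applying $\sh$ to the structure morphisms above, and using the product-comparison isomorphisms to identify $\sh(\cM^k\times\cM^\ell)$ with $\sh\cM^k\times\sh\cM^\ell$ and $\sh(\scA\times\cM^k)$ with $\scA\times\sh\cM^k$, produces morphisms of sheaves $\sh(d)\colon\sh\cM^k\to\sh\cM^{k+1}$ and $\sh(\wedge)\colon\sh\cM^k\times\sh\cM^\ell\to\sh\cM^{k+\ell}$ together with an $\scA$-module structure on each $\sh\cM^k$. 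Because a functor carries commutative diagrams to commutative diagrams, every CDGA axiom satisfied by $(\cM^\bullet,\wedge,d)$ is again satisfied by $(\sh\cM^\bullet,\sh(\wedge),\sh(d))$. Evaluating on an open set $U$, this says precisely that $(\sh\cM^\bullet(U),\sh(\wedge)_U,\sh(d)_U)$ is a CDGA over the $\cin$-ring $\scA(U)=\sh\scA(U)$ and that the restriction maps are morphisms of CDGAs; thus $\sh\cM^\bullet$ is a presheaf of CDGAs in the sense of the functorial description. Finally, $\sh\cM^k$ is by construction a sheaf for every $k$, so by Definition~\ref{def:12.16-1} the triple $(\{\sh\cM^k\},\sh(d),\sh(\wedge))$ is a sheaf of CDGAs; the unit $\eta\colon\cM^\bullet\to\sh\cM^\bullet$ is then automatically a morphism of presheaves of CDGAs, being a natural transformation to which the same structure maps were applied.

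The only non-formal ingredient — and hence the main point to justify — is the preservation of finite products by $\sh$: it is exactly this that lets us view $\sh(\wedge)$ as an operation $\sh\cM^k\times\sh\cM^\ell\to\sh\cM^{k+\ell}$ on the sheafified objects, rather than merely a map out of $\sh(\cM^k\times\cM^\ell)$. A reader preferring a hands-on argument can replace this step by a stalkwise construction: since $\sh$ does not change stalks, each stalk $\cM^\bullet_x$ is a CDGA over the local $\cin$-ring $\scA_x$ (a filtered colimit of CDGAs), one defines $\sh(\wedge)$ and $\sh(d)$ on sections germ-by-germ, and one checks that the wedge or the differential of two locally representable germ families is again locally representable, by wedging or differentiating local presheaf representatives on a common neighborhood. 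I expect the product-preservation route to be the cleanest, and it is the one I would write up in detail.
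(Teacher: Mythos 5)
Your proposal is correct and follows essentially the same route as the paper's proof: both rest on the two observations that sheafification preserves finite products (so $\sh(\wedge)$ can be viewed as a map $\sh(\cM^k)\times\sh(\cM^\ell)\to\sh(\cM^{k+\ell})$) and that a functor carries the commutative diagrams expressing the CDGA axioms to commutative diagrams. The only cosmetic difference is that the paper handles $d\circ d=0$ separately via $\sh(0)=0$, whereas you fold it into the general diagram-preservation argument; this is a matter of presentation, not substance.
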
 

\begin{proof}
Note that since the sheafification of
  the zero (pre)sheaf is zero, the sheafification of any zero morphism
  is zero as well: $\sh(0) =0$. Then $0 = \sh(0) = \sh(d^2) =
  (\sh(d))^2$.  Hence $(\{\sh(\cM^k)\}_{k=0}^\infty, 
\sh(d))$ is a cochain complex of sheaves.      Since sheafification
preserves finite products we get 
collection of morphisms
\[
  \sh(\wedge): \sh(\cM^k)  \times \sh(\cM^\ell)\to \sh(\cM^{k+\ell})
\]
We can express the fact that $\wedge$ is graded commutative and the
fact that $d$ is a derivation of degree 1 with respect to $\wedge $
in terms of commutative diagrams.   Since $\sh$ is a
functor, it follows that $(\{\sh(\cM^k)\}_{k=0}^\infty, \sh(d), \sh(\wedge))$  is a sheaf
of CDGAs.  
\end{proof}

\begin{definition} \label{def:9.8}
We define the {\sf $\cin$-algebraic de Rham complex
$(\bOmega^\bullet_\scA, \sh(\wedge), \sh(d))$ of sheaves} on a
$\cin$-ringed space $(M,\scA)$ to be the sheafification of the
presheaf of CDGAs $(\{\Lambda^k\Omega^1_\scA\} _{k\geq 0}, \wedge,
d)$, which was constructed in Lemma~\ref{constr:9.3}.
  \end{definition}

Recall that 
given a manifold with corners $M$ we have the de Rham complex $\bOmega^\bullet_{dR,
  M}$ of sheaves of ordinary differential forms and the $\cin$-algebraic de Rham
complex $\bOmega_{\cin_M}^\bullet $ of the manifold $M$ viewed as
local $\cin$-ringed space $(M, \cin_M)$.  By
Lemma~\ref{lem:CDGAcorners} the two sheaves of CDGAs agree.

Differential forms pull back.  Our $\cin$-algebraic differential forms on
$\cin$-ringed spaces had better pull back as well: given a map
$(f,f_\#): (M, \scA)\to (N, \scB)$ of $\cin$-ringed spaces we expect
to have a
map of sheaves of CDGAs $\tilde{f}:\bOmega^\bullet _\scA\to
f_*(\bOmega^\bullet _\scB)$.  This leads to a complication.
By Lemma~\ref{lem:9.5} a map $(f,f_\#): (M, \scA)\to (N, \scB)$  of local
  $\cin$-ringed spaces induces a map $
\tilde{f}: \Lambda^\bullet \Omega^1_\scB \to  f_*(\Lambda^\bullet \Omega^1_\scA)$
of presheaves of CDGAs over $\scB$.  Recall that the adjunction $\sh_N
\dashv i_N$ comes with the natural transformation
$\eta:\id_{\Psh_N}\Rightarrow i\circ \sh_N$, the unit of adjunction.
The components $\eta_\scP :\scP \to \sh_N(\scP)$ of the natural
transformation $\eta$ have the following
universal property: for any sheaf $\scS$ and any map of presheaves
$\varphi:\scP\to \scS$ there is a unique map of presheaves
$\bar{\varphi}: \sh_N(\scP)\to \scS$ so that
\[
\bar{\varphi}\circ \eta_\scP = \varphi.
\]  
The same holds for presheaves and sheaves of $\scA$-modules over $M$.
Since the pushforward $f_*:\Psh_M\to \Psh_N$ is a functor, we have a
map of presheaves
\[
f_*(\Lambda^k\Omega^1_\scA) \xrightarrow{ f_*
  (\eta_{\Lambda^k\Omega^1_\scA}) }
f_* (\bOmega^k_\scA).
\]  
Consequently we get the following diagram of maps of presheaves
\[
\bOmega^k_\scB \xleftarrow{ \eta_{ \Lambda^k\Omega^1_\scB} } \Lambda^k\Omega^1_\scB
\xrightarrow{\quad \tilde{f} \quad }
f_*(\Lambda^k\Omega^1_\scA) \xrightarrow{ f_* (\eta_{\Lambda^k\Omega^1_\scA}) } f_* (\bOmega^k_\scA),
\]
which we can't compose since one of the maps goes in the wrong
direction.  However the universal property of the map $\eta_{
  \Lambda^k\Omega^1_\scB}$
gives us $\bar{f}^k: \bOmega^k_\scB\to \bOmega^k_\scA$ making the
diagram
\[
\xy
(-35,15)*+{\bOmega^k_\scB}="1";
(40,15)*+{f_*\bOmega^k_\scA}="2";
(-15,0)*+{\Lambda^k \Omega^1_\scB}="3";
(15,-0)*+{f_*(\Lambda^k \Omega^1_\scA)}="4";
{\ar@{->} ^{\bar{f}^k} "1";"2"};
{\ar@{->}^{\eta_{\Lambda^k\Omega^1_\scB}} "3";"1"};
{\ar@{->} _{\eta_{\Lambda^k\Omega^1_\scA}}"4";"2"};
{\ar@{->}_{\tilde{f}} "3";"4"};
\endxy
\]  
commute.  And now we have a problem: it is not clear that the {\em outer}
square in the diagram
\[
\xy
(-35,15)*+{\bOmega^k_\scB}="1";
(40,15)*+{f_*\bOmega^k_\scA}="2";
(-15,0)*+{\Lambda^k \Omega^1_\scB}="3";
(15,-0)*+{f_*(\Lambda^k \Omega^1_\scA)}="4";
(-35,-30)*+{\bOmega^{k+1}_\scB}="11";
(40,-30)*+{f_*\bOmega^{k+1}_\scA}="22";
(-15,-15)*+{\Lambda^{k+1} \Omega^1_\scB}="33";
(15,-15)*+{f_*(\Lambda^{k+1} \Omega^1_\scA)}="44";
{\ar@{->} ^{\bar{f}^k} "1";"2"};
{\ar@{->} _{d} "3";"33"};
{\ar@{->} ^{d} "4";"44"};
{\ar@{->}^{\eta_{\Lambda^k\Omega^1_\scB}} "3";"1"};
{\ar@{->} _{\eta_{\Lambda^k\Omega^1_\scA}}"4";"2"};
{\ar@{->}^{\tilde{f}} "3";"4"};
{\ar@{->}_{\bar{f}^k} "11";"22"};
{\ar@{->}^{\eta_{\Lambda^{k+1}\Omega^1_\scB}} "33";"11"};
{\ar@{->} _{\eta_{\Lambda^{k+1}\Omega^1_\scA}}"44";"22"};
{\ar@{->}_{\tilde{f}} "33";"44"};
{\ar@{->}_{\sh(d)} "1";"11"};
{\ar@{->} ^{\sh(d)} "2";"22"};
\endxy
\]  
commutes.  The same issue arises with compatibility of multiplications
on $\bOmega^\bullet _\scB$ and $f_*\bOmega^\bullet _\scA$: it is not
clear that $\bar{f}: \bOmega^\bullet _\scB \to f_*\bOmega^\bullet
_\scA$ is a map of sheaves of CGAs.   For this reason we carry out  a
category-theoretic exercise first.

\begin{lemma} \label{lem:9.9}
Let $\xy
  (-8,0)*+{\scC}="1";
  (8,0)*+{\scD}="2";
  (0,0)*+{\perp};
  {\ar@/^0.7pc/@{->}^{L} "1"; "2"};
  {\ar@/^0.7pc/@{->}^{R} "2"; "1"};
  \endxy$
  be a pair of adjoint functors, $\eta: \id_\scC\Rightarrow
  RL$ the unit of adjunction, $h:d\to d'$ a morphism in $\scD$,
  $g:c\to c'$, $k:c\to Rd$, $k': c'\to Rd'$ morphisms in $\scC$ so
  that the diagram
  \begin{equation}\label{eq:9.2}
\xy
(-15,10)*+{c}="1";
(15,10)*+{Rd}="2";
(-15,-5)*+{c'}="3";
(15,-5)*+{Rd'}="4";
{\ar@{->}^{k} "1";"2"};
{\ar@{->}_{g} "1";"3"};
{\ar@{->} ^{Rh}  "2";"4"};
{\ar@{->}_{k'} "3";"4"};
\endxy
\end{equation}
commutes in $\scC$.    Let $\tilde{k}:Lc\to d$, $\tilde{k'}:Lc'\to d'$ be the unique
morphisms with $R\tilde{k} \circ \eta_c = k$, $R\tilde{k'}\circ
\eta_{c'} = k'$.  Then the diagram
\[
\xy
(-15,10)*+{Lc}="1";
(15,10)*+{d}="2";
(-15,-5)*+{Lc'}="3";
(15,-5)*+{d'}="4";
{\ar@{->}^{\tilde{k}} "1";"2"};
{\ar@{->}_{Lg} "1";"3"};
{\ar@{->} _{h}  "2";"4"};
{\ar@{->}_{\tilde{k'}} "3";"4"};
\endxy
\]
commutes in $\scD$. 
\end{lemma}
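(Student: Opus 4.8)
The plan is to recognize that the passage $k\mapsto\tilde{k}$ is nothing but the adjunction bijection (the transpose), so that the assertion of the lemma is exactly the naturality of that transpose. Recall that for objects $c\in\scC$ and $d\in\scD$ the adjunction $L\dashv R$ supplies a bijection
\[
\Phi_{c}^{d}\colon \Hom_\scD(Lc,d)\xrightarrow{\ \cong\ }\Hom_\scC(c,Rd),\qquad \Phi_{c}^{d}(\varphi)=R\varphi\circ\eta_c,
\]
and that by definition $\tilde{k}$ and $\tilde{k'}$ are precisely the preimages of $k$ and $k'$ under $\Phi_{c}^{d}$ and $\Phi_{c'}^{d'}$; that is, $\Phi_{c}^{d}(\tilde{k})=k$ and $\Phi_{c'}^{d'}(\tilde{k'})=k'$. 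Thus the whole content is that transposing is natural in both variables, and I would prove the square commutes by transposing it back across the adjunction.

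Concretely, both composites $h\circ\tilde{k}$ and $\tilde{k'}\circ Lg$ are morphisms $Lc\to d'$ in $\scD$, so they lie in $\Hom_\scD(Lc,d')$. Since $\Phi_{c}^{d'}$ is a bijection, hence injective, it suffices to show that $\Phi_{c}^{d'}(h\circ\tilde{k})=\Phi_{c}^{d'}(\tilde{k'}\circ Lg)$. For the first, using functoriality of $R$ and the defining equation $R\tilde{k}\circ\eta_c=k$,
\[
\Phi_{c}^{d'}(h\circ\tilde{k})=R(h\circ\tilde{k})\circ\eta_c=Rh\circ R\tilde{k}\circ\eta_c=Rh\circ k.
\]
For the second, using functoriality of $R$, the naturality of the unit $\eta$ applied to $g\colon c\to c'$ (which gives $RLg\circ\eta_c=\eta_{c'}\circ g$), and the defining equation $R\tilde{k'}\circ\eta_{c'}=k'$,
\[
\Phi_{c}^{d'}(\tilde{k'}\circ Lg)=R(\tilde{k'}\circ Lg)\circ\eta_c=R\tilde{k'}\circ RLg\circ\eta_c=R\tilde{k'}\circ\eta_{c'}\circ g=k'\circ g.
\]
The hypothesis that diagram~\eqref{eq:9.2} commutes says exactly $Rh\circ k=k'\circ g$, so the two transposes agree. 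By injectivity of $\Phi_{c}^{d'}$ we conclude $h\circ\tilde{k}=\tilde{k'}\circ Lg$, which is the commutativity of the asserted square in $\scD$.

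This argument is purely formal, and there is no genuine obstacle: the only points requiring care are the correct invocation of the naturality of $\eta$ in the computation of $\Phi_{c}^{d'}(\tilde{k'}\circ Lg)$, and the bookkeeping of which transpose bijection ($\Phi_{c}^{d}$, $\Phi_{c'}^{d'}$, or $\Phi_{c}^{d'}$) is being used at each step. In the application to sheafification ($L=\sh$, $R=i$, $\eta$ the unit) this lemma will let us promote the presheaf map $\tilde{f}$ to a genuine map $\bar f$ of sheaves of CDGAs that commutes with both $d$ and $\wedge$, by taking $h$ to be $\sh(d)$ or a multiplication map and $k,k'$ the relevant components $\eta_{\Lambda^\bullet\Omega^1_\scB}\circ(\cdots)$.
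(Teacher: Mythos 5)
Your proof is correct and takes essentially the same route as the paper: both establish the square by transposing it across the adjunction and comparing with the hypothesis square \eqref{eq:9.2}. The only (cosmetic) difference is that the paper invokes naturality of the adjunction bijection $\theta_{a,b}\colon \Hom_\scC(a,Rb)\to\Hom_\scD(La,b)$ in both variables as a black box, whereas you work with the inverse bijection and unpack that naturality into functoriality of $R$ plus naturality of the unit $\eta$, finishing by injectivity of the transpose.
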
  

\begin{proof}
Since $L$ is left adjoint to $R$, for all $a\in \scC$, $b\in \scC$
there are bijections $\theta_{a,b}:\Hom_\scC(a, Rb)\to \Hom_\scD(La,
b)$ which are natural in $a$ and $b$.  Moreover
\[
(\theta_{a,b})\inv (\tilde{\ell}) = R(\tilde{\ell})
\circ \eta _a
\]
for all $\tilde{l} \in \Hom_\scD(La,
b)$.  Hence $\tilde{k} = \theta_{c,d}(k)$ and $\tilde{k'} = \theta_{c',d'}(k')$.
Since $\theta_{a,b}$ is natural in $a$ and $b$ the
diagram
\begin{equation}\label{eq:88*}
\xy
(-15,10)*+{\Hom(c, Rd)}="1";
(25,10)*+{\Hom(Lc,d)}="2";
(-15,-5)*+{\Hom(c, Rd')}="3";
(25,-5)*+{\Hom(Lc,d')}="4";
(-15,-20)*+{\Hom(c', Rd')}="5";
(25,-20)*+{\Hom(Lc',d')}="6";
{\ar@{->}^{\theta_{c,d}} "1";"2"};
{\ar@{->}_{(Rh)_*} "1";"3"};
{\ar@{->} ^{h_*}  "2";"4"};
{\ar@{->}_{\theta_{c,d'}} "3";"4"};
{\ar@{<-}_{g^*} "3";"5"};
{\ar@{<-}_{(Lg)^*} "4";"6"};
{\ar@{->}_{\theta_{c',d'}} "5";"6"};
\endxy
\end{equation}
commutes.  We now compute:
\[
h\circ \tilde{k} \overset{\eqref{eq:88*}}{=} h_*\theta_{c,d}(k)  = \theta_{c,d'} ((Rh)_* (k))
\overset{\textrm{  \eqref{eq:9.2} }}{=} \theta_{c,d'}
(k'\circ g)   \overset{\eqref{eq:88*}}{=} (Lg)^* \theta_{c',d'}(k') = \tilde{k'} \circ Lg.
\]
Therefore $h\circ \tilde{k} = \tilde{k'} \circ Lg$ and we are done.
\end{proof}

\begin{corollary} \label{cor:9.10}
Let $A_1\xrightarrow{h} A_2$ be a map of presheaves over a space $M$,
$B_1\xrightarrow{g} B_2$ a map of presheaves over a space $N$, $f:M\to
N$ a continuous map and $f_i: B_i \to f_* A_i$, $i=1,2$ maps of
presheaves making the diagram 
\[
\xy
(-15,10)*+{B_1}="1";
(15,10)*+{f_*A_1}="2";
(-15,-5)*+{B_2}="3";
(15,-5)*+{f_*A_2}="4";
{\ar@{->}^{f_1} "1";"2"};
{\ar@{->}_{g} "1";"3"};
{\ar@{->} ^{f_*h} "2";"4"};
{\ar@{->}_{f_2} "3";"4"};
\endxy
\]
commute.   Let $\eta_{A_i}:A_i\to \sh(A_i)$, $\eta_{B_i}:B_i\to
\sh(B_i)$ denote the sheafifications and let $\tilde{f}_i: \sh(B_i)\to
f_*(\sh(A_i))$ be the canonical maps with
\[
\tilde{f}_i \circ \eta_{B_i} = f_* (\eta_{A_i}) \circ f_i, \quad i=1,2.
\]
Then the diagram
\[
\xy
(-15,10)*+{\sh(B_1)}="1";
(15,10)*+{f_*\sh(A_1)}="2";
(-15,-5)*+{\sh(B_2)}="3";
(15,-5)*+{f_*\sh(A_2)}="4";
{\ar@{->}^{\tilde{f}_1} "1";"2"};
{\ar@{->}_{\sh(g)} "1";"3"};
{\ar@{->} ^{f_*\sh(h)} "2";"4"};
{\ar@{->}_{\tilde{f}_2} "3";"4"};
\endxy
\]
commutes.
\end{corollary}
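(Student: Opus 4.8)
The plan is to deduce this at once from the purely category-theoretic Lemma~\ref{lem:9.9}, applied to the sheafification adjunction on $N$. First I would instantiate that lemma with $\scC = \Psh_N$ and $\scD = \Sh_N$, taking $L = \sh$ to be sheafification, $R$ the inclusion $\Sh_N\hookrightarrow \Psh_N$, and $\eta$ the unit of the adjunction $\sh\dashv R$ on $N$. The objects and arrows are matched as follows: set $c = B_1$, $c' = B_2$, and let $g\colon c\to c'$ be the given map $g\colon B_1\to B_2$. For the targets set $d = f_*\sh(A_1)$ and $d' = f_*\sh(A_2)$; these genuinely lie in $\Sh_N$ because the pushforward of a sheaf along a continuous map is again a sheaf (Remark~\ref{rmrk:2.41-7}). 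Finally take $h = f_*\sh(h)\colon f_*\sh(A_1)\to f_*\sh(A_2)$, which is a morphism in $\Sh_N$ since $f_*$ and $\sh$ are functors.

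Next I would introduce the two comparison maps $k := f_*(\eta_{A_1})\circ f_1\colon B_1\to f_*\sh(A_1)$ and $k' := f_*(\eta_{A_2})\circ f_2\colon B_2\to f_*\sh(A_2)$. The defining property of the canonical maps $\tilde f_i$, namely $\tilde f_i\circ \eta_{B_i} = f_*(\eta_{A_i})\circ f_i$, says exactly that $\tilde f_1$ and $\tilde f_2$ are the morphisms $\tilde k$ and $\tilde{k'}$ produced by Lemma~\ref{lem:9.9} from $k$ and $k'$ (here $R\tilde f_i\circ\eta_{B_i}=k$ for $i=1$ and $=k'$ for $i=2$). Thus the only thing left to check is that the hypothesis square \eqref{eq:9.2} of that lemma, i.e.\ $Rh\circ k = k'\circ g$, commutes in $\Psh_N$.

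This verification is the sole computation, and it is short. Applying the naturality of the unit $\eta$ on $M$ to $h\colon A_1\to A_2$ gives $\sh(h)\circ\eta_{A_1} = \eta_{A_2}\circ h$, and pushing forward by $f_*$ yields $f_*\sh(h)\circ f_*(\eta_{A_1}) = f_*(\eta_{A_2})\circ f_*(h)$. Hence
\[
Rh\circ k = f_*\sh(h)\circ f_*(\eta_{A_1})\circ f_1 = f_*(\eta_{A_2})\circ f_*(h)\circ f_1 = f_*(\eta_{A_2})\circ f_2\circ g = k'\circ g,
\]
where the third equality uses the assumed commutativity of the given square, $f_*(h)\circ f_1 = f_2\circ g$. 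With \eqref{eq:9.2} verified, Lemma~\ref{lem:9.9} gives $h\circ\tilde k = \tilde{k'}\circ Lg$, which under the identifications $\tilde k = \tilde f_1$, $\tilde{k'} = \tilde f_2$, $h = f_*\sh(h)$ and $Lg = \sh(g)$ is precisely the asserted commuting square.

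I expect no serious obstacle: all the real content is carried by Lemma~\ref{lem:9.9}, and the task reduces to correct bookkeeping of which presheaf maps play the roles of $k,k',d,d'$. The two points to keep in mind are that $f_*$ sends sheaves to sheaves, so that $f_*\sh(A_i)$ really is an object of the target category $\Sh_N = \scD$ of the adjunction, and that the naturality being invoked is that of the sheafification unit \emph{on $M$}, transported to $N$ by the functor $f_*$.
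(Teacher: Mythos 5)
Your proposal is correct and follows exactly the paper's route: the paper's proof likewise applies Lemma~\ref{lem:9.9} to the sheafification adjunction $\sh\dashv i$ on $N$ with $c=B_1$, $c'=B_2$, $d=f_*\sh(A_1)$, $d'=f_*\sh(A_2)$, $k=f_*(\eta_{A_1})\circ f_1$ and $k'=f_*(\eta_{A_2})\circ f_2$. The only difference is that you explicitly verify the hypothesis square \eqref{eq:9.2} via naturality of the unit on $M$ pushed forward by $f_*$ together with the assumed square, a check the paper's proof leaves implicit by simply asserting that its diagram commutes.
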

\begin{proof}
We know that the sheafification functor $\sh$ is left adjoint to the
inclusion $i:\Sh\hookrightarrow \Psh$.   Apply Lemma~\ref{lem:9.9}  to
the commutative diagram
\[
\xy
(-25,10)*+{B_1}="1";
(15,10)*+{i(f_*\sh(A_1))}="2";
(-25,-5)*+{B_2}="3";
(15,-5)*+{i(f_*\sh(A_2))}="4";
{\ar@{->}^{f_* (\eta_{A_1}) \circ f_1} "1";"2"};
{\ar@{->}_{g} "1";"3"};
{\ar@{->} ^{i(f_*\sh(h))} "2";"4"};
{\ar@{->}_{f_* (\eta_{A_2}) \circ f_2} "3";"4"};
\endxy .
\]
\end{proof}

We are now in position to prove that $\cin$-algebraic differential
forms pull back along maps of $\cin$-ringed spaces, and that the pullback
commutes with exterior multiplication and exterior derivatives.
\begin{theorem} \label{thm:8.13}
 A map  $(f,f_\#): (M, \scA)\to (N, \scB)$  of
  $\cin$-ringed spaces induces a map 
\[
\bar{f}: \bOmega^\bullet _\scB \to  f_*(\bOmega^\bullet_\scA)
\]
of sheaves of CDGAs.
\end{theorem}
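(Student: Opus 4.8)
The plan is to produce $\bar f$ by sheafifying, one degree at a time, the presheaf map $\tilde f : \Lambda^\bullet \Omega^1_\scB \to f_*(\Lambda^\bullet \Omega^1_\scA)$ supplied by Lemma~\ref{lem:9.5}, and then to check that the resulting family of maps respects $\sh(d)$ and $\sh(\wedge)$ by invoking the category-theoretic Corollary~\ref{cor:9.10}. For each $k$, composing the component $\tilde f^k : \Lambda^k\Omega^1_\scB \to f_*(\Lambda^k\Omega^1_\scA)$ with $f_*(\eta_{\Lambda^k\Omega^1_\scA})$ gives a map of presheaves into the \emph{sheaf} $f_*(\bOmega^k_\scA)$ (it is a sheaf because $\bOmega^k_\scA = \sh(\Lambda^k\Omega^1_\scA)$ is a sheaf and $f_*$ takes sheaves to sheaves). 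Since $\bOmega^k_\scB = \sh(\Lambda^k\Omega^1_\scB)$, the universal property of the unit $\eta_{\Lambda^k\Omega^1_\scB}$ then yields a unique map $\bar f^k : \bOmega^k_\scB \to f_*(\bOmega^k_\scA)$ with $\bar f^k \circ \eta_{\Lambda^k\Omega^1_\scB} = f_*(\eta_{\Lambda^k\Omega^1_\scA}) \circ \tilde f^k$. These maps constitute $\bar f = \{\bar f^k\}_{k}$ as a morphism of graded sheaves.

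It remains to show that $\bar f$ is a map of complexes and is multiplicative. For the differential I would apply Corollary~\ref{cor:9.10} with $A_1 = \Lambda^k\Omega^1_\scA$, $A_2 = \Lambda^{k+1}\Omega^1_\scA$, $h = d$ over $M$, with $B_1 = \Lambda^k\Omega^1_\scB$, $B_2 = \Lambda^{k+1}\Omega^1_\scB$, $g = d$ over $N$, and with $f_1 = \tilde f^k$, $f_2 = \tilde f^{k+1}$. The hypothesis that the relevant square of presheaf maps commutes is exactly the assertion that $\tilde f$ commutes with the differentials, which is part of Lemma~\ref{lem:9.5} (that $\tilde f$ is a map of presheaves of CDGAs). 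The conclusion of the corollary is the commuting square $f_*\sh(d) \circ \bar f^k = \bar f^{k+1} \circ \sh(d)$, so $\bar f$ is a morphism of complexes of sheaves.

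For the wedge product I would run the same argument on the product presheaves, taking $A_1 = \Lambda^k\Omega^1_\scA \times \Lambda^\ell\Omega^1_\scA$, $A_2 = \Lambda^{k+\ell}\Omega^1_\scA$, $h = \wedge$, and symmetrically over $N$, with $f_1 = \tilde f^k \times \tilde f^\ell$. Here I would use the fact, already exploited in the proof of Lemma~\ref{lem:9.7}, that in the categories of (pre)sheaves of modules finite products agree with finite coproducts, so that the left adjoint $\sh$ and the pushforward $f_*$ both commute with them. Under the identifications $\sh(A_1) \cong \bOmega^k_\scA \times \bOmega^\ell_\scA$, $f_*\sh(A_1) \cong f_*\bOmega^k_\scA \times f_*\bOmega^\ell_\scA$, and likewise over $N$, the map that the corollary produces from $\tilde f^k \times \tilde f^\ell$ is, by uniqueness, precisely $\bar f^k \times \bar f^\ell$, and the output square of the corollary is the identity $f_*\sh(\wedge) \circ (\bar f^k \times \bar f^\ell) = \bar f^{k+\ell} \circ \sh(\wedge)$ expressing that $\bar f$ preserves $\sh(\wedge)$. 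The main obstacle is exactly this last bookkeeping: one must verify that the product identifications are compatible with the maps handed out by Corollary~\ref{cor:9.10}, so that the abstract corollary --- stated for a single arrow $h$ --- genuinely applies to the bilinear operation $\wedge$. Once compatibility with both $\sh(d)$ and $\sh(\wedge)$ is in hand, $\bar f$ is a morphism of sheaves of CDGAs and the proof is complete.
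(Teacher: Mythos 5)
Your proposal is correct and follows essentially the same route as the paper: Lemma~\ref{lem:9.5} supplies the presheaf-level map $\tilde f$, the unit of the sheafification adjunction defines $\bar f^k$ degreewise, and Corollary~\ref{cor:9.10} is applied to the commuting squares for $d$ and for $\wedge$ (the latter on product presheaves). The only difference is that you explicitly flag the bookkeeping that $\sh$ and $f_*$ commute with finite products and that the map induced on the product sheaves is $\bar f^k \times \bar f^\ell$; the paper's proof uses exactly these identifications silently when it writes $\bar f \times \bar f$ in the output of Corollary~\ref{cor:9.10}.
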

\begin{proof}
By Lemma~\ref{lem:9.5}  the map $(f, f_\#)$ induces a map $\tilde{f}:
\Lambda^\bullet \Omega^1_\scB \to  f_*(\Lambda^\bullet \Omega^1_\scA)$
of presheaves of CDGAs.   In particular for each $k\geq 0$ we have
maps $\tilde{f}: \Lambda^k \Omega^1_\scB \to  f_*(\Lambda^k
\Omega^1_\scA)$ so that the following two diagrams
\[
\xy
(-25,10)*+{\Lambda^k \Omega^1_{\scB}\times \Lambda^\ell \Omega^1_{\scB}}="1";
(25,10)*+{f_*\Lambda^k \Omega^1_{\scA}\times f_*\Lambda^\ell \Omega^1_{\scA}}="2";
(-25,-5)*+{\Lambda^{k+\ell} \Omega^1_{\scB}}="3";
(25,-5)*+{f_*\Lambda^{k+\ell} \Omega^1_{\scA}}="4";
{\ar@{->}^{\tilde{f} \times \tilde{f} } "1";"2"};
{\ar@{->}_{\wedge} "1";"3"};
{\ar@{->}^{\wedge} "2";"4"};
{\ar@{->}_{\tilde{f} } "3";"4"};
\endxy
\]
\[
\xy
(-15,10)*+{\Lambda^k \Omega^1_{\scB}}="1";
(25,10)*+{f_*\Lambda^k \Omega^1_{\scA}}="2";
(-15,-5)*+{\Lambda^{k+1} \Omega^1_{\scB}}="3";
(25,-5)*+{f_*\Lambda^{k+1} \Omega^1_{\scA}}="4";
{\ar@{->}^{\tilde{f} } "1";"2"};
{\ar@{->}_{d} "1";"3"};
{\ar@{->}^{d} "2";"4"};
{\ar@{->}_{\tilde{f} } "3";"4"};
\endxy
\]  
commute.   Recall the notation: $\bOmega^k_\scA:=
\sh(\Lambda^k\Omega^1_\scA)$ etc.  Apply Corollary~\ref{cor:9.10} to
the two diagrams above and set
\[
  \bar{f} := \tilde{\tilde{f}}
\]
(the first  $\tilde{\,}$ on $f$ comes from Lemma~\ref{lem:9.5} and the
second from Corollary~\ref{cor:9.10}). We get two commuting diagrams:
\[
\xy
(-15,10)*+{\bOmega^k _{\scB}\times \bOmega ^\ell _{\scB}}="1";
(25,10)*+{f_* \bOmega^k_{\scA}\times f_*\bOmega^\ell_{\scA}}="2";
(-15,-5)*+{\bOmega^{k+\ell} _{\scB}}="3";
(25,-5)*+{f_*\bOmega^{k+\ell} _{\scA}}="4";
{\ar@{->}^{\bar{f} \times \bar{f}} "1";"2"};
{\ar@{->}_{\sh(\wedge)} "1";"3"};
{\ar@{->}^{\sh(\wedge)} "2";"4"};
{\ar@{->}_{\bar{f} } "3";"4"};
\endxy
\quad \textrm{and} \qquad 
\xy
(-15,10)*+{\bOmega^k _{\scB}}="1";
(25,10)*+{f_*\bOmega^k _{\scA}}="2";
(-15,-5)*+{\bOmega^{k+1} _{\scB}}="3";
(25,-5)*+{f_*\bOmega^{k+1} _{\scA}}="4";
{\ar@{->}^{\bar{f} } "1";"2"};
{\ar@{->}_{\sh(d)} "1";"3"};
{\ar@{->}^{\sh(d)} "2";"4"};
{\ar@{->}_{\bar{f} } "3";"4"};
\endxy,
\]
for all $k, \ell \geq 0$.
This proves the theorem.
\end{proof}

\begin{corollary} \label{cor:5.11}
Let $M$ be a manifold with corners and $(N, \scB)$ a local
$\cin$-ringed space.  Then a map $(f,f_\#): (M, \cin_N)\to (N, \scB)$
of  $\cin$-ringed spaces induces a map of commutative differential
graded algebras
\[
f^*: \bOmega^\bullet _\scB(N) \to \bOmega^\bullet_{dR,M}(M)
\]
over the $\cin$-ring $\scB(N)$.
\end{corollary}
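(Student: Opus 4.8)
The plan is to specialize Theorem~\ref{thm:8.13} to the structure sheaf $\scA = \cin_M$, invoke Lemma~\ref{lem:CDGAcorners} to replace the sheafified $\cin$-algebraic de Rham complex of $(M,\cin_M)$ by the ordinary de Rham complex, and then simply read off the induced map on global sections. First I would apply Theorem~\ref{thm:8.13} to the given map $(f,f_\#)\colon (M,\cin_M)\to (N,\scB)$. This produces a map
\[
\bar{f}\colon \bOmega^\bullet_\scB \to f_*(\bOmega^\bullet_{\cin_M})
\]
of sheaves of CDGAs over the space $N$. Since $M$ is a manifold with corners, Lemma~\ref{lem:CDGAcorners} identifies $\bOmega^\bullet_{\cin_M}$ with the ordinary de Rham sheaf $\bOmega^\bullet_{dR,M}$ (indeed the presheaf $\Lambda^\bullet\Omega^1_{\cin_M}$ is already a sheaf there, so sheafification changes nothing). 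Hence $\bar{f}$ may be regarded as a map $\bOmega^\bullet_\scB \to f_*(\bOmega^\bullet_{dR,M})$ of sheaves of CDGAs.

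Next I would take the $N$-component of $\bar{f}$, that is, evaluate this morphism of presheaves on the open set $U=N$. A morphism of sheaves of CDGAs is in particular a morphism of the underlying presheaves of CDGAs, so each component $\bar{f}_U$ is a morphism in the category $\CDGA$; the two commuting squares (for $\sh(\wedge)$ and for $\sh(d)$) produced in the proof of Theorem~\ref{thm:8.13}, when evaluated at $N$, say precisely that $\bar{f}_N$ is compatible with wedge products and with the differential. Its source is $\bOmega^\bullet_\scB(N)$, and its target is computed by unwinding the pushforward:
\[
\left(f_*(\bOmega^\bullet_{dR,M})\right)(N) = \bOmega^\bullet_{dR,M}(f\inv(N)) = \bOmega^\bullet_{dR,M}(M),
\]
using $f\inv(N)=M$. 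Setting $f^*:=\bar{f}_N$ then yields the desired map of commutative differential graded algebras.

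Finally I would record the $\cin$-ring over which $f^*$ is a CDGA morphism. The $\cin$-ring component of the $\CDGA$-morphism $\bar{f}_N$ is the map $f_{\#,N}\colon \scB(N)\to (f_*\cin_M)(N)=\cin(M)$; via $f_{\#,N}$ the target $\bOmega^\bullet_{dR,M}(M)$ becomes a $\scB(N)$-module, and $f^*$ is $\scB(N)$-linear and commutes with $\wedge$ and $d$. This is exactly the assertion that $f^*$ is a morphism of CDGAs over $\scB(N)$. I expect no genuine obstacle here: the only points requiring care are the bookkeeping that identifies the global sections of $f_*(\bOmega^\bullet_{dR,M})$ with $\bOmega^\bullet_{dR,M}(M)$, and the verification that evaluation at $N$ transports the \emph{full} CDGA structure (not merely the graded-algebra or cochain structure), both of which follow formally from the definitions of a (sheaf of) CDGA and of pushforward.
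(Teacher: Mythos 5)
Your proposal is correct and follows essentially the same route as the paper's own proof: apply Theorem~\ref{thm:8.13} to get a map $\bar{f}\colon \bOmega^\bullet_\scB \to f_*(\bOmega^\bullet_{\cin_M})$ of sheaves of CDGAs, identify $\bOmega^\bullet_{\cin_M}$ with $\bOmega^\bullet_{dR,M}$ via Lemma~\ref{lem:CDGAcorners}, and take the component at the open set $U=N$, using $f\inv(N)=M$. The only difference is that you spell out more explicitly why evaluation at $N$ yields a CDGA morphism over $\scB(N)$, a point the paper leaves implicit.
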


\begin{proof}
  By Lemma~\ref{lem:CDGAcorners} the sheaf 
  $\bOmega^\bullet_{\cin_M}  $ of $\cin$-algebraic de Rham forms is
  the sheaf $
\bOmega^\bullet_{dR, M}$ of ordinary differential forms. By
Theorem~\ref{thm:8.13} the map $(f,f_\#)$ gives rise to a map
$\bar{f}:\bOmega^\bullet_\scB\to f_* \bOmega^\bullet_{dR, M}$ of sheaves of CDGAs, which is a
collection of maps of CDGAs
\[
\{(\bar{f})_U: \bOmega^\bullet_\scB(U) \to \bOmega^\bullet_{dR,
  M}(f\inv (U))\}_{U\in \Open(N)}.
\]
For $U= N$ we get
\[
f^*: = (\bar{f})_N: \bOmega^\bullet_\scA(N) \to \bOmega^\bullet_{dR,
  M}(M) \equiv \bOmega^\bullet_{dR}(M).
\]  
\end{proof}

\begin{remark} \label{rmrk:5.12} Let $M$ be a manifold with corners
  and $(f,f_\#): (M, \cin_M)\to (N, \scB)$ a map $\cin$-ringed spaces
  as in Corollary~\ref{cor:5.11}.  Suppose now that $N$ is a manifold
  with corners and the structure sheaf $\scB$ is the sheaf $\cin_N$ of
  smooth functions.  Then we can view both $N$ and $M$ as differential
  spaces.  Since the category of differential spaces embeds into the
  category local $\cin$-ringed spaces  (Theorem~\ref{thm:8.2}) the map
  $f:M\to N$ is a smooth map (in
  the sense that for any $h\in \cin(N)$, $h\circ f \in \cin(M)$; there
  are other, more restrictive definitions of smooth maps between
  manifolds with corners).  Unravelling various definitions one
  discovers that the map
  $f^*: \bOmega^\bullet _{\cin_N} (N) \to \bOmega^\bullet_{dR}(M)$ is
  the usual pullback  map
  $f^* :\bOmega^\bullet _{dR}(N) \to \bOmega^\bullet_{dR}(M)$ of
  ordinary differential forms.  This uses the facts that
  $\bOmega^\bullet_{dR}(M) = \Lambda^\bullet \bOmega^1_{dR}(M)$, that
  $\Omega^1_{\cin(M)} = \bOmega^1_{dR}(M)$ and that $\Lambda^1(f^*)
  :\Omega^1_{\cin(N)} \to \Omega^1_{\cin(M)}$ induced by the map
  $f^*:\cin(N)\to \cin (M)$ of $\cin$-rings agrees with the pullback
  map $f^* : \bOmega^1_{dR}(N) \to \bOmega^1_{dR}(M)$ on 1-forms and so on.
\end{remark}

\section{Integration  and Stokes' theorem}

Recall from the previous section that for any manifold with corners $Q$ the presheaf
$(\Lambda^\bullet \Omega^1_{\cin_Q}), d, \wedge)$ of $\cin$-algebraic
de Rham forms agrees with the sheaf
$(\bOmega^\bullet_{dR,Q}, d, \wedge)$ of ordinary differential forms
(Lemma~\ref{lem:CDGAcorners}).  Consequently sheafification of
$\Lambda^\bullet \Omega^1_{\cin_Q}$ does nothing and the possibly
mysterious sheaf
$\bOmega^\bullet_{\cin_Q}= \sh(\Lambda^\bullet \Omega^1_{\cin_Q})$
turns out to be 
the sheaf $(\bOmega^\bullet_{dR,Q}, d, \wedge)$ of ordinary de Rham
differential forms.  Next we recall a definition of the standard
(geometric) $k$-simplex.

\begin{definition} \label{def:face}
  The {\sf standard $k$-simplex $\Delta^k$} is the
space
\[
\Delta^k =\{x\in \R^{k+1} \mid x_i\geq 0, x_1+\cdots+ x_k = 1\}.
\]
The $k$-simplex is a manifold with corners, hence a local
$\cin$-ringed space $(\Delta^k, \cin_{\Delta^k})$

The {\sf $i$th face}  of the $k$th simplex is the map $d_i:\Delta^{k-1} \to
\Delta^k$ given by 
\[
d_i(x_1, \ldots, x_{k-1} ) = (x_1, \ldots,
x_{i-1}, 0, x_i,\ldots, x_{k-1}).
\]
We view  $d_i$ as  a map of differential spaces (and of local
$\cin$-ringed spaces, see Remark~\ref{rmrk:6.2}).
\end{definition}

\begin{remark} (Cf.\ Remark~\ref{rmrk:5.12}.) \label{rmrk:6.2}
  Recall that there is a fully faithful functor from the category of
  differential spaces to the category $\LCRS$ of local $\cin$-ringed
  spaces (Theorem~\ref{thm:8.2}).  For this reaon we will not
  notationally distinguished between the map $d_i: \Delta^{k-1} \to
  \Delta^k$ of differential spaces 
  and the corresponding map
  $d_i: (\Delta^{k-1}, \cin_{\Delta^{k-1}}) \to (\Delta^k,
  \cin_{\Delta^k})$ in $\LCRS$. 
\end{remark}  

\begin{definition}
A {\sf $k$-simplex} of a local  $\cin$-ringed space $(M,\scA)$ is a map
$\sigma: (\Delta^k, \cin_{\Delta^k})\to (M,\scA)$ of local
$\cin$-ringed spaces.

We define the {\sf vector space $C_k(M, \scA;\R )$ of real $k$-chains}
of a local $\cin$-ringed space $(M,\scA)$ to be the real vector space
generated by the set of $k$-simplexes of $(M,\scA)$.  We will refer to
elements of $C_k(M, \scA;\R)$ a $k$-chains and write them as finite
 sums $\sum n_{\sigma,} \sigma$.
\end{definition}

\begin{remark}
A simplex $\sigma: (\Delta^k, \cin_{\Delta^k})\to (M,\scA)$ is really
a pair of maps: a continuous map $f:\Delta^k \to M$ and a map of
sheaves $f_\#: \scA \to f_* \cin_{\Delta_k}$.  However, writing $(f,
f_\#)$ for $\sigma$ in this situation is notationally too heavy, so we
will avoid it.
\end{remark}  

\begin{definition}
The {\sf boundary} $\partial \sigma$ of a $k$-simplex $\sigma:
(\Delta^k, \cin_{\Delta^k})\to (M,\scA)$ is the chain $\partial \sigma
\in C_{k-1}(M,\scA;\R)$ defined by 
\[
\partial \sigma = \sum (-1)^i \sigma \circ d_i
\]  
where $d_i:\Delta_{k-1} \to \Delta_k$, $i=1, \ldots, k$ are the
standard face maps of Definition~\ref{def:face}.
\end{definition}
\begin{definition} The {\sf boundary } $\partial \Delta^k$ of the standard
$k$-simplex is the chain $\partial (\id)\in C_{k-1}(\Delta^k,
\cin_{\Delta_k}; \R)$ where $\id: (\Delta^k,\cin_{\Delta_k} )\to (\Delta^k,
\cin_{\Delta_k})$ is the identity $k$-simplex.
\end{definition}

\begin{definition}[Integral of a global $\cin$-algebraic differential
  form]
Let  $(M,\scA)$ be a local $\cin$-ringed space, $\sigma: (\Delta^k,
\cin_{\Delta^k})\to (M,\scA)$ a $k$-simplex and $\omega \in \bOmega^k_\scA
(M) = \left(\sh (\Lambda^k \Omega^1_\scA) \right)(M)$ a global $\cin$-algebraic differential
form.   Then, since by Lemma~\ref{lem:CDGAcorners},  $\sh(\Lambda^k \Omega^1_{\cin_{\Delta^k}})
=\bOmega^k_{dR} (\Delta^k)$, the pullback $\sigma^*\omega $ is an
ordinary $k$-form on the manifold with corners $\Delta^k$.

We define
{\sf the integral of $\omega$ over $\sigma$} to be
\[
\int_\sigma\omega := \int_{\Delta^k} \sigma^*\omega.
\]
We define {\sf the integral of $\omega$ over a $k$-chain }
$\sum n_\sigma \sigma\in C_k (M,\scA;\R)$ to be
\[
\int_{\sum n_\sigma \sigma} \omega := \sum n_\sigma  \int_{\Delta^k} \sigma^*\omega.
\]
\end{definition}   

Recall Stokes's theorem for simplices (see, for example, \cite[Theorem~8.2.9]{Conlon}) :

\begin{theorem} \label{thm:Stokes_simp} For any $k>0$ and for any differential form
  $\gamma\in \bOmega_{dR}(\Delta^k)$ 
\[
   \int_{\Delta_k} d\gamma = \int_{\partial \Delta_k} \gamma.
\]
\end{theorem}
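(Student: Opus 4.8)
The plan is to treat this as the special case of Stokes's theorem for a compact oriented manifold with corners, specialized to $\Delta^k$; I will sketch a self-contained direct proof so that the identification of the combinatorial boundary $\partial \Delta^k = \sum_i (-1)^i d_i$ with the geometric boundary is made explicit. First I would replace $\Delta^k \subset \R^{k+1}$ by the affine model $T^k = \{t \in \R^k : t_i \ge 0,\ \sum_{i=1}^k t_i \le 1\}$ via the diffeomorphism of manifolds with corners that drops one barycentric coordinate, fixing the orientation induced from the standard orientation of $\R^k$. Since both sides of the asserted identity are $\R$-linear in the $(k-1)$-form $\gamma$, it suffices to verify it when $\gamma = f\, dt_1 \wedge \cdots \wedge \widehat{dt_j} \wedge \cdots \wedge dt_k$ for a single index $j$ and a function $f \in \cin$ on a neighborhood of $T^k$.

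For such $\gamma$ one computes $d\gamma = (-1)^{j-1} (\partial f/\partial t_j)\, dt_1 \wedge \cdots \wedge dt_k$, so that $\int_{T^k} d\gamma$ is evaluated by Fubini's theorem, integrating in the variable $t_j$ first and applying the fundamental theorem of calculus. The $t_j$-integral runs from the coordinate face $\{t_j = 0\}$ to the slanted face $\{\sum_i t_i = 1\}$, so it produces exactly two boundary contributions: the value of $f$ on the hypotenuse face minus its value on $\{t_j = 0\}$, integrated over the projection onto the remaining coordinates.

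On the other side, $\int_{\partial T^k}\gamma$ is a sum over the $k+1$ faces. On any coordinate face $\{t_i = 0\}$ with $i \neq j$ the pullback of $\gamma$ vanishes, because $dt_i$ occurs among the factors of $\gamma$ and restricts to $0$ there; hence only the face $\{t_j = 0\}$ and the hypotenuse survive, and these match the two terms of the fundamental-theorem-of-calculus computation. The remaining work is a matter of signs: one checks that the orientation $\partial T^k$ inherits together with the combinatorial signs $(-1)^i$ agrees with the induced boundary orientation, so that the two surviving terms carry the correct relative sign.

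The step I expect to be the main obstacle is precisely this orientation and sign bookkeeping: verifying that the algebraically defined boundary $\sum_i (-1)^i \sigma \circ d_i$ reproduces the induced boundary orientation uniformly across the coordinate faces and the slanted face, and that the factor $(-1)^{j-1}$ coming from $d\gamma$ combines with the upper and lower limits of the fundamental theorem of calculus to give an equality rather than a sign discrepancy. Everything else is a routine application of Fubini's theorem and the fundamental theorem of calculus. As this is a classical fact, I would alternatively simply invoke the general Stokes theorem for compact oriented manifolds with corners, e.g.\ \cite[Theorem~8.2.9]{Conlon}, once the orientation conventions for the faces $d_i$ have been matched with those of the cited source.
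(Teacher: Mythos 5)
The paper gives no proof of this statement at all: it simply recalls it as a classical fact, and the citation to \cite[Theorem~8.2.9]{Conlon} \emph{is} the paper's entire treatment, since the theorem serves only as an input to the subsequent Stokes theorem for $\cin$-ringed spaces. Your main route is therefore genuinely different: you sketch the standard direct proof (the argument one finds in Spivak or Warner) --- reduce by $\R$-linearity to monomial forms $f\,dt_1\wedge\cdots\wedge\widehat{dt_j}\wedge\cdots\wedge dt_k$ on the affine model $T^k$, compute $d\gamma=(-1)^{j-1}(\partial f/\partial t_j)\,dt_1\wedge\cdots\wedge dt_k$, integrate by Fubini and the fundamental theorem of calculus, note that $\gamma$ pulls back to zero on every coordinate face other than $\{t_j=0\}$, and then match signs. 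This is sound, and the orientation bookkeeping you flag as the main obstacle is tedious but poses no real obstruction; indeed, for the paper's purposes that bookkeeping is the genuinely substantive point, because the paper defines $\partial\Delta^k$ combinatorially as the chain $\sum_i(-1)^i\,d_i$ rather than as an oriented geometric boundary, so the compatibility of the combinatorial signs with the induced boundary orientations is precisely what makes the classical theorem apply in the stated form --- a point the paper leaves entirely implicit. What each approach buys: the paper's citation is economical and sufficient for its needs, while your direct argument makes the statement self-contained and makes the combinatorial-versus-geometric boundary identification explicit. Your closing fallback --- invoke \cite[Theorem~8.2.9]{Conlon} once orientation conventions for the faces $d_i$ are matched --- is verbatim the paper's approach.
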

As an immediate consequence we get
\begin{theorem}
Let $(M,\scA)$ be a local $\cin$-ringed space, $\gamma\in
\bOmega^{k-1}_\scA(M)$ a global $\cin$-algebraic de Rham  form
and $\sigma: (\Delta^k, \cin_{\Delta^k})\to (M,\scA)$ a $k$-simplex.
Then
\[
\int_\sigma d\gamma = \int_{\partial \sigma} \gamma.
\]  
\end{theorem}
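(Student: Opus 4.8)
The plan is to push the statement forward along $\sigma$ onto the standard simplex, where it reduces to ordinary Stokes's theorem (Theorem~\ref{thm:Stokes_simp}). The two structural facts doing the work are that pullback of $\cin$-algebraic forms is a map of CDGAs (hence commutes with $d$) and that it is functorial in the map of $\cin$-ringed spaces.

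First I would unfold the definitions. Since $\Delta^k$ is a manifold with corners, the simplex $\sigma\colon (\Delta^k, \cin_{\Delta^k}) \to (M,\scA)$ is covered by Corollary~\ref{cor:5.11}, which furnishes a morphism of commutative differential graded algebras
\[
\sigma^*\colon \bOmega^\bullet_\scA(M) \to \bOmega^\bullet_{dR,\Delta^k}(\Delta^k),
\]
where the target is identified with ordinary de Rham forms via Lemma~\ref{lem:CDGAcorners}. By the definition of the integral over a simplex,
\[
\int_\sigma d\gamma = \int_{\Delta^k} \sigma^*(d\gamma).
\]
Because $\sigma^*$ is a map of CDGAs it commutes with the exterior derivative, so $\sigma^*(d\gamma) = d(\sigma^*\gamma)$, and $\sigma^*\gamma$ is an ordinary $(k-1)$-form on $\Delta^k$. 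Applying classical Stokes (Theorem~\ref{thm:Stokes_simp}) yields
\[
\int_{\Delta^k} \sigma^*(d\gamma) = \int_{\Delta^k} d(\sigma^*\gamma) = \int_{\partial\Delta^k} \sigma^*\gamma.
\]

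It then remains to recognize the right-hand side as $\int_{\partial\sigma}\gamma$. Expanding the chain $\partial\Delta^k = \sum_i (-1)^i d_i$ and integrating term by term,
\[
\int_{\partial\Delta^k}\sigma^*\gamma = \sum_i (-1)^i \int_{\Delta^{k-1}} d_i^*\bigl(\sigma^*\gamma\bigr).
\]
Here each $d_i\colon (\Delta^{k-1}, \cin_{\Delta^{k-1}}) \to (\Delta^k, \cin_{\Delta^k})$ is a map of manifolds with corners, so $d_i^*$ is the ordinary pullback (Remark~\ref{rmrk:5.12}). Using functoriality of the pullback under composition, that is $d_i^* \circ \sigma^* = (\sigma\circ d_i)^*$, I can rewrite this as
\[
\sum_i (-1)^i \int_{\Delta^{k-1}} (\sigma\circ d_i)^*\gamma = \int_{\partial\sigma}\gamma,
\]
the last equality being the definition of integration over $\partial\sigma = \sum_i (-1)^i \sigma\circ d_i$.

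The one point deserving care is the functoriality identity $(\sigma\circ d_i)^* = d_i^* \circ \sigma^*$. Unlike the commutation with $d$, which is packaged directly into Theorem~\ref{thm:8.13}, functoriality of the assignment $f\mapsto\bar{f}$ under composition is not isolated as a separate statement in the excerpt; however, it drops out of the explicit formula for $\Lambda^\bullet(\cdot)$ in Proposition~\ref{prop:6.5} together with the universal property of sheafification used to build $\bar f$. Everything else is a routine chase of definitions, which is why the result is an \emph{immediate} consequence of the ordinary Stokes theorem.
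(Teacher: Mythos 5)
Your proposal is correct and follows essentially the same route as the paper: the paper's proof is exactly the chain of equalities $\int_\sigma d\gamma = \int_{\Delta^k}\sigma^*(d\gamma) = \int_{\Delta^k} d(\sigma^*\gamma) = \int_{\partial\Delta^k}\sigma^*\gamma = \sum(-1)^i\int_{\Delta^{k-1}} d_i^*(\sigma^*\gamma) = \int_{\partial\sigma}\gamma$, invoking Theorem~\ref{thm:Stokes_simp} at the middle step. If anything, you are more careful than the paper, which silently uses the functoriality identity $(\sigma\circ d_i)^* = d_i^*\circ\sigma^*$ in its final equality, a point you correctly flag and justify via Proposition~\ref{prop:6.5} and the sheafification construction.
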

\begin{proof}
The proof is a computation:
\[
\int_\sigma d\gamma = \int_{\Delta^k} \sigma^*(d\gamma) =
\int_{\Delta^k} d (\sigma^*\gamma) \stackrel{\textrm{Theorem~\ref{thm:Stokes_simp}}}{=} \int_{\partial \Delta^k} \sigma^*
\gamma = \sum (-1)^i \int_{\Delta^{k-1}} d_i^* (\sigma^*\gamma) = \int_{\partial
  \sigma} \gamma.
\]  
\end{proof}  

\begin{remark}
Instead of integrating over singular chains we may just as easily
integrate over compact manifolds with boundary and then there is a
version of the Stokes' theorem as well. 
\end{remark}  

\appendix

\section{Differential spaces and bump functions} \label{app:bump}
\noindent The goal  of the appendix is to prove that condition
(\ref{def:sikorksi:it1}) in the definition of a differential space is
equivalent to existence of bump functions.  The material  is extracted from \cite{KL}.

\begin{definition} \label{def:a.30} Let $(M,\scT)$ be a topological space,
  $C\subset M$ a closed set and $x\in M\smallsetminus C$ a point.  A
  {\sf bump function} (relative to $C$ and $x$) is a continuous
  function $\rho:M\to [0,1]$ so that $(\supp \rho)\, \cap C = \varnothing $
  and $\rho$ is identically 1 on a neighborhood of $x$.
\end{definition}

\begin{definition} \label{def:a.31} Let $(M,\scT)$ be a topological
  space and $\scF \subseteq C^0(M,\R)$ a collection of continuous
  real-valued functions on $M$. The topology $\scT$ on $M$ is {\sf
    $\scF$-regular} iff for any closed subset $C$ of $M$ and any point
  $x\in M\smallsetminus C$ there is a bump function $\rho\in \scF$
  with $\supp \rho \subset M\smallsetminus C$ and $\rho$ identically 1
  on a neighborhood of $x$.
\end{definition}  

\begin{lemma}\label{lem:bump}
Let $(M,\scT)$ be a topological space and $\scF \subset
  C^0(M,\R)$ a $\cin$-subring. Then $\scT$ is the smallest topology making all
  the functions in $\scF$ continuous if and only if the topology $\scT$ is $\scF$-regular.
\end{lemma}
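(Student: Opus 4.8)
The plan is to prove the two implications separately, writing $\scT_\scF$ for the initial (smallest) topology on $M$ making every $f\in\scF$ continuous. Since by hypothesis $\scF\subseteq C^0(M,\scT)$, the topology $\scT$ already makes all functions of $\scF$ continuous, so $\scT_\scF\subseteq\scT$ holds unconditionally; the content of the lemma lies in comparing the two topologies in the opposite direction.

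First I would show that if $\scT=\scT_\scF$ then $\scT$ is $\scF$-regular. Fix a closed set $C$ and a point $x\in M\smallsetminus C$. The set $M\smallsetminus C$ is an open neighborhood of $x$, so by definition of the initial topology there are finitely many $f_1,\ldots,f_k\in\scF$ and open sets $V_1,\ldots,V_k\subseteq\R$ with $x\in\bigcap_i f_i^{-1}(V_i)\subseteq M\smallsetminus C$. Writing $F:=(f_1,\ldots,f_k):M\to\R^k$, this says $F(x)\in\prod_i V_i$ and $F^{-1}(\prod_i V_i)\subseteq M\smallsetminus C$. Here is the key step, and the one place where the $\cin$-ring structure is essential: I choose an ordinary smooth bump function $\psi\in\cin(\R^k)$ valued in $[0,1]$, identically $1$ on a neighborhood $W$ of $F(x)$, and with compact support contained in $\prod_i V_i$, and set $\rho:=\psi_\scF(f_1,\ldots,f_k)=\psi\circ F$. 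Because $\scF$ is a $\cin$-subring, $\rho\in\scF$, and $\rho:M\to[0,1]$. Then $\rho\equiv 1$ on the open neighborhood $F^{-1}(W)$ of $x$, and since $\supp\psi$ is compact (hence closed) and contained in $\prod_i V_i$, one gets $\supp\rho\subseteq F^{-1}(\supp\psi)\subseteq F^{-1}(\prod_i V_i)\subseteq M\smallsetminus C$, so $\rho$ is the required bump function.

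Conversely I would show that $\scF$-regularity forces $\scT\subseteq\scT_\scF$, which together with $\scT_\scF\subseteq\scT$ gives equality. Let $U\in\scT$ and $x\in U$. Apply $\scF$-regularity to the closed set $C:=M\smallsetminus U$ and the point $x$ to obtain $\rho\in\scF$ with $\rho:M\to[0,1]$, $\rho\equiv 1$ near $x$, and $\supp\rho\subseteq U$. Then $V:=\rho^{-1}((0,\infty))$ is $\scT_\scF$-open, being the preimage of an open set under the $\scT_\scF$-continuous function $\rho$; it contains $x$ (as $\rho(x)=1$); and since $\rho\geq 0$ one has $V=\{\rho>0\}\subseteq\supp\rho\subseteq U$. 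Thus every point of $U$ admits a $\scT_\scF$-open neighborhood inside $U$, so $U$ is $\scT_\scF$-open.

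I expect the main obstacle to be the support control in the forward direction: one must choose the smooth bump $\psi$ with \emph{compact} support inside the open box $\prod_i V_i$ precisely so that the closed support of $\rho=\psi\circ F$ — the closure of $\{\rho\neq 0\}$, not merely the set $\{\rho\neq 0\}$ itself — is guaranteed to land in $M\smallsetminus C$. Everything else is either routine point-set bookkeeping or the standard existence of smooth bump functions on $\R^k$, and the reverse implication is entirely elementary.
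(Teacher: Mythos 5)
Your proof is correct and follows essentially the same route as the paper: the forward direction composes a smooth bump function on $\R^k$ with a finite tuple of functions from $\scF$ (using the $\cin$-subring hypothesis) whose basic open set separates $x$ from $C$, and the reverse direction writes any $\scT$-open set as a union of positivity sets $\rho_x^{-1}((0,\infty))$ of bump functions. Your extra care about the support condition (that $\supp\rho$ is the \emph{closure} of the non-vanishing set, controlled via the closed set $F^{-1}(\supp\psi)$) is a point the paper passes over silently, but closedness of $\supp\psi$ already suffices for this, so the two arguments are the same in substance.
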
  
 
\begin{proof}
Let $\scT_\scF$ denote the smallest topology making all the functions
in $\scF$ continuous.  Note that the set
\[
 \{ f^{-1}(I) \mid  f \in \scF,  \text{ $I$ is an open
    interval } \}
\]  
is a sub-basis for $\scT_\scF$.  Since all the functions in $\scF$ are
continuous with respect to $\scT$, $\scT_\scF\subseteq \scT$.
Therefore it is enough to argue that $\scT\subseteq \scT_\scF $ if and
only if $\scT$ is $\scF$-regular.\\[4pt]
($\Rightarrow$)\quad Suppose $\scT \subseteq \scT_\scF$.  Let
$C\subset M$ be $\scT$-closed and $x$ a point in $M$ which is not in
$C$.  Then $M \setminus C$ is $\scT$-open.  Since $\scT \subseteq
\scT_\scF$ by assumption, $M \setminus C$ is in $\scT_\scF$.  
Then there exist functions $h_1,\ldots,h_k \in \scF$
and open intervals $I_1,\ldots,I_k$ such that
$x \in \cap_{i=1}^k h_i^{-1}(I_i) \subset M \setminus C$.  There is a
$\cin$ function $\rho: \R^k \to [0,1]$
with $\supp \rho \subset I_1 \times \ldots \times I_k$ and the
property that
$\rho = 1$ on a neighborhood of $(h_1(x),\ldots,h_k(x))$ in $\R^k$.
Then $\tau: = \rho \circ (h_1,\ldots,h_k)$ is in $ \scF$, since $\scF$ is a
$\cin$-subring of $C^0(M)$.  The function $\tau$ is a desired bump
function.\\[4pt]
($\Leftarrow$)\quad Suppose the topology $\scT$ is
$\scF$-regular. Let $U\in \scT$ be an open set.   Then $C= M\setminus
U$ is closed.  Since $\scT$ is $\scF$-regular, for any point $x\in U$ there
is a bump function $\rho_x \in \scF$ with $\supp \rho_x \subset U$ and
$\rho_x $ is identically $1$ in a neighborhood of $x$.   Then $\rho_x\inv ((0,\infty)) \subset U$
and $\rho_x\inv ((0,\infty)) \in \scT_\scF$.   It follows that
\[
U = \bigcup_{x\in U} \rho_x\inv ((0,\infty)) \in \scT_\scF.
\]  
Since $U$ is an arbitrary element of $\scT$, $\scT \subseteq \scT_\scF$.
\end{proof}

\section{Differential spaces as   local $\cin$-ringed
  spaces} \label{app:embed}

The goal of the appendix is to prove the following theorem.

\begin{theorem}\label{thm:8.2}
The category $\DiffSp$ of differential spaces is (isomorphic to) a
full subcategory of the category $\LCRS$ of local $\cin$-ringed spaces.
\end{theorem}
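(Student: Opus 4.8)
The plan is to exhibit an explicit functor $I:\DiffSp \to \LCRS$ sending a differential space $(M,\scF)$ to the local $\cin$-ringed space $(M,\scF_M)$ of Lemma~\ref{lem:2.58}, and then to verify that $I$ is injective on objects and fully faithful; these properties together say precisely that $\DiffSp$ is isomorphic to a full subcategory of $\LCRS$.

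First I would define $I$ on morphisms. Given a smooth map $\varphi:(M,\scF)\to(N,\scG)$, I set $I(\varphi)=(\varphi,\varphi_\#)$, where for each open $V\subseteq N$ the component $\varphi_{\#,V}:\scG_N(V)\to \scF_M(\varphi^{-1}(V))=(\varphi_*\scF_M)(V)$ is the pullback $h\mapsto h\circ\varphi$. To see this is well defined I would use the local description of $\scG_N$ (Remark~\ref{rmrk:2.57}): if $h\in\scG_N(V)$ there is an open cover $\{V_\alpha\}$ of $V$ and $g_\alpha\in\scG$ with $h|_{V_\alpha}=g_\alpha|_{V_\alpha}$; pulling back along $\varphi$, the function $h\circ\varphi$ agrees on each $\varphi^{-1}(V_\alpha)$ with $g_\alpha\circ\varphi\in\scF$ (using smoothness of $\varphi$), so $h\circ\varphi\in\scF_M(\varphi^{-1}(V))$. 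Compatibility with restrictions is immediate, and pullback preserves the $\cin$-operations since $(g\circ(h_1,\dots,h_k))\circ\varphi=g\circ(h_1\circ\varphi,\dots,h_k\circ\varphi)$, so $\varphi_\#$ is a map of sheaves of $\cin$-rings. Functoriality of $I$ and the identity law follow from the contravariant functoriality of pullback.

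Faithfulness is immediate, since the underlying continuous map of $I(\varphi)$ is $\varphi$ itself, so $I(\varphi_1)=I(\varphi_2)$ forces $\varphi_1=\varphi_2$. Injectivity on objects is equally direct: if $(M,\scF_M)=(N,\scG_N)$ then $M=N$ as topological spaces, and taking global sections together with $\scF_M(M)=\scF$, $\scG_N(N)=\scG$ (Remark~\ref{rmrk:7.2}) yields $\scF=\scG$.

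The hard part will be fullness. Given an arbitrary morphism $(f,f_\#):(M,\scF_M)\to(N,\scG_N)$ in $\LCRS$, I must show it equals $I(f)$; equivalently, that $f$ is smooth as a map of differential spaces and that $f_\#$ is honest pullback of functions. The key observation is that, because $\scF_M$ and $\scG_N$ are sheaves of actual real-valued functions, the value of a germ at a point is recovered by the unique $\R$-point of the stalk, namely the evaluation $\ev_x$. Since any map of local $\cin$-rings preserves $\R$-points, the induced stalk map $f_x:\scG_{N,f(x)}\to\scF_{M,x}$ satisfies $\ev_x\circ f_x=\ev_{f(x)}$. Hence for $g\in\scG_N(V)$ and $x\in f^{-1}(V)$,
\[
(f_\#(g))(x)=\ev_x\big((f_\#(g))_x\big)=\ev_x\big(f_x(g_{f(x)})\big)=\ev_{f(x)}(g_{f(x)})=g(f(x)),
\]
so $f_\#(g)=g\circ f$. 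Applying this to global sections $g\in\scG=\scG_N(N)$ gives $g\circ f=f_\#(g)\in\scF_M(M)=\scF$, so $f$ is smooth; and $f_\#$ is the pullback map, i.e.\ $(f,f_\#)=I(f)$. This will complete the proof.
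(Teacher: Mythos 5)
Your proposal is correct and follows essentially the same route as the paper: the same functor $I$, pullback as $\varphi_\#$, immediate faithfulness, and fullness deduced from the fact that a map of sheaves between the structure sheaves must intertwine the unique $\R$-points (evaluations) on stalks, forcing $f_\#(g)=g\circ f$. Your version is in fact slightly tighter in two spots the paper glosses over --- you explicitly check injectivity on objects, and you note that $f_\#(g)\in\scF_M(M)=\scF$ for global $g$ makes $f$ smooth, so $I(f)$ is well defined --- but the core argument is identical.
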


\begin{proof}
We construct a functor $I: \DiffSp \to \LCRS$ and check that it is
fully faithful.  We define $I$ on objects by sending a differential
space $(M, \scF)$ to the corresponding local $\cin$-ringed space
$(M, \scF_M)$:
\[
I( (M, \scF ))= (M, \scF_M),
\]
cf.\ Lemma~\ref{lem:2.58}.

We need to define $I$ on morphisms.  Recall that a smooth map
$f: (M, \scF)\to (N, \scG)$ between two differential spaces, i.e., a
morphism in $\DiffSp$, is a continuous map $f:M\to N$ with the
property that for all $h\in \scG$ its pullback $f^*h$ is in $\scF$.
To define $I(f)$ we need to construct a map of sheaves
$f_\#: \scG_N\to f_* \scF_M$ and set $I(f):= (f, f_\#)$.

Recall that $\scG_N$ is the sheafification of the presheaf $\cP_N$
defined by
\[
\cP_N(U) = \scG|_U
\]  
for all open sets $U\subseteq N$.   Since pullback by $f$ commutes
with restrictions we get a map of presheaves
\[
f_\#: \cP_N \to f_* (\cP_M) \hookrightarrow f_* \scF_M, \qquad f_\# (h) = h\circ f.
\]  
The universal property of sheafification $\cP_N\hookrightarrow \scG_N$
uniquely extends $f_\#$ to a map
\[
\scG_N\to f_*\scF_M, 
\]
which is given by the same formula: for any open set $U\subset N$ and
for any function $h\in \cin_N(U)$,
\[
h\mapsto h\circ f \in \scF_M(f\inv (U)).
\]
We now abuse notation and define $f_\#: \scG_N\to f_*\scF_M$  by
$f_\#(h):= h\circ f$.   The map $f_\#$ is a map of sheaves.
We now define $I$ on morphisms by
\[
I\big( (M, \scF) \xrightarrow{f} (N, \scG)\big) = (f, f_\#): (M,
\scF_M)\to (N, \scG_N), 
\]  
where as above $f_\# h = h\circ f$.

It is not hard to check that $I$ preserves identity maps and
composition.  Therefore $I$ is a functor.
If $I(f) = (f, f_\#) = (g, g_\#) = I(g)$ for two maps $f, g: (M,
\scF) \to (N, \scG)$ then clearly $f=g$.   Therefore $I$ is
injective on $\Hom$ sets, i.e, the functor $I$ is faithful.

It remains to prove that $I$ is surjective on $\Hom$ sets.    This is
where locality has to come in.  Suppose we have a continuous map
$f:M\to N$ and a map of sheaves $\psi: \scG_N \to f_*\scF_M$.  We want
to show that for any open set $U\subset N$ and any $h\in \scG_N(U)$
\[
\psi (h) = h\circ f,
\]  
for then $(f, \psi) = I(f)$.

Consider a point $p\in U$ and a point $q\in f\inv (p)$.  The map
$\psi$ induces a map on stalks
\[
\psi_p: (\scG_N)_p \to (f_*\scF_M)_p.
\]
Since $q\in f\inv (p)$, there is a canonical map of stalks
$(f_*\scF_M)_p \to (\scF_M)_q$.  Precomposing this canonical map with
$\psi_p$ we get a map
\[
\varphi: (\scG_N)_p\to (\scF_M)_q.
\]  
We can give this map $\varphi$ a concrete description: if $U$ is a
neighborhood of $p$ and $h\in \scG_N(U)$ then
\[
\varphi ([(U, h)] = [(f\inv (U), \psi (h))] \in (\scF_M)_q
\]  
The stalk $(\scF_M)_q$ is a local $\cin$-ring, hence has a unique
$\R$-point $\hat{q}: (\scF_M)_q \to \R$: $\hat{q} (g_q) = g(q)$ for
all germs $g_q\in (\scF_M)_q$;   it is the evaluation at $q$.  The composite
$\hat{q} \circ \varphi:(\scG_N)_p \to \R$ is also an $\R$-point.
Since the $\cin$-ring $(\scG_N)_p$ is local, it has only one
$\R$-point, namely $\hat{p}$, the evaluation at $p$.  Therefore
\[
\hat{q} \circ \varphi = \hat{p}.
\]  
We are almost done.
Recall that any nonzero $\cin$-ring $\scC$ contains a copy of $\R$.  This
copy correspond to operations of arity $0$, i.e., to constants, and any
map of $\cin$-rings preserves these constants.
Now for any germ $h_p \in (\scF_M)_p$
\[
0 = \hat{p} (h_p - h(p))  = \hat{q}(\varphi(h_p -h(p))) = \hat{q} 
  (\varphi(h_p) ) - h(p) 
\]  
since $h(p)$ is a constant.  Since $h(p) = h(f(q))$ and since $\hat{q} 
  (\varphi(h_p) ) = \psi (h) (q)$  we have
\[
\psi (h) (q) = h (f(q))
\]
for all $q\in f\inv (U)$, and we are done. 
\end{proof}

\end{document}